\pgfplotsset{compat=newest}
\pgfplotsset{colormap={violet}{rgb255=(25,25,122) rgb255=(238,140,238) color=(white)}}
\numberwithin{equation}{section}
\def\be{\begin{equation}}
\def\ee{\end{equation}}
\def\bea{\begin{eqnarray}}
\def\eea{\end{eqnarray}}
\font\script=rsfs10 at 12pt
\def\eps{\varepsilon}
\def\A{{\mathcal A}}
\def\C{{\mathcal C}}
\def\G{{\mathcal G}} 
\def\L{{\mbox{\script L}\,}}
\def\R{{\mathbb R}}
\def\P{{\mathbb P}} 
\def\E{{\mathbb E}} 
\def\N{{\mathbb N}}
\def\Z{{\mathbb Z}}
\def\bS{{\mathbb S}} 
\def\H{{\mathbb H}} 
\def\cH{{\mathcal H}} 
\def\One{{\mathbbm 1}} 
\newcommand{\M}{{\mathcal M}}
\renewcommand{\(}{\left(}
\renewcommand{\)}{\right)}
\newcommand{\vol}{\mathrm{vol}} 
\newcommand{\supp}{\mathrm{supp}\ } 
\def\bal{\begin{aligned}}
\def\eal{\end{aligned}}
\def\proofof#1{\begin{proof}[Proof of #1]}
\def\part#1#2{\par\noindent{\underline{\it Part~#1.}}\emph{ #2}\\}
\def\({\left(}
\def\){\right)}
\DeclareMathOperator{\diam}{\rm diam}
\newcommand*\di{\mathop{}\!\mathrm{d}}
\def\XXint#1#2#3{{\setbox0=\hbox{$#1{#2#3}{\int}$} \vcenter{\vspace{-1pt}\hbox{$#2#3$}}\kern-.5\wd0}}
\theoremstyle{plain}
\newtheorem{lemma}{Lemma}[section]
\newtheorem{prop}[lemma]{Proposition}
\newtheorem{theorem}[lemma]{Theorem}
\newtheorem{example}[lemma]{Example}
\newtheorem{corol}[lemma]{Corollary}
\newtheorem{defin}[lemma]{Definition}
\newtheorem{remark}[lemma]{Remark}
\newtheorem*{remark*}{Remark}
\newtheorem*{notation*}{Notation}
\newtheorem*{hypothesis*}{Hypothesis}
\newcounter{mt}
\begin{document}

\title[Asymptotic quantization on Riemannian manifolds via covering growth estimates]{Asymptotic quantization of measures on Riemannian manifolds via covering growth estimates}
\author{Ata Deniz Ayd{\i}n}
\author{Mikaela Iacobelli}
\address{ETH Z\"urich, Department of Mathematics, R\"amistrasse 101, 8092 Z\"urich, Switzerland.}
\email[Ata Deniz Ayd{\i}n]{deniz.aydin@math.ethz.ch}
\email[Mikaela Iacobelli]{mikaela.iacobelli@math.ethz.ch}

\begin{abstract}
The quantization problem looks for best approximations of a probability measure on a given metric space by finitely many points, where the approximation error is measured with respect to the Wasserstein distance. On particular smooth domains, such as $\R^d$ or complete Riemannian manifolds, the quantization error is known to decay polynomially as the number of points is taken to infinity, provided the measure satisfies an integral condition which controls the amount of mass outside compact sets. On Riemannian manifolds, the existing integral condition involves a quantity measuring the growth of the exponential map, for which the only available estimates are in terms of lower bounds on sectional curvature.

In this paper, we provide a more general integral condition for the asymptotics of the quantization error on Riemannian manifolds, given in terms of the growth of the covering numbers of spheres, which is purely metric in nature and concerns only the large-scale growth of the manifold. We further estimate the covering growth of manifolds in two particular cases, namely lower bounds on the Ricci curvature and geometric group actions by a discrete group of isometries. 
These estimates can themselves generalize beyond manifolds, and hint at a future treatment of asymptotic quantization also on non-smooth metric measure spaces.
\end{abstract}

\maketitle

\section{Introduction}

\subsection{General overview}

The quantization problem for measures on metric spaces involves the approximation of finite measures by finitely many points. 
Given a finite Borel measure $\mu$ on a complete and separable metric space $X$, the quantization problem of order $p \in [1,\infty]$ involves the minimization of the \emph{$p$-Wasserstein distance} between $\mu$ and a measure supported on at most $N$ points:
\[ e_{N,p}(\mu) = \inf_{\# \supp\mu_N \leq N} W_p(\mu, \mu_N). \]
This problem admits various equivalent formulations; in particular, it reduces to a problem over the support of the approximating measure $\mu_N$. More specifically, it is equivalent to minimizing the following functional
\[ e_p(\mu; S) := \|d(\cdot, S)\|_{L^p(X; \mu)} = \left\{ \begin{matrix} \sqrt[p]{ \int_X d(x, S)^p \di\mu(x) }, & p < \infty; \\ \sup_{x \in \supp\mu} d(x,S), & p = \infty \end{matrix} \right. \]
over all sets $S \subseteq X$ of cardinality at most $N$:
\[ e_{N,p}(\mu) = \inf_{\# S \leq N} e_p(\mu; S). \]
For example, suppose $X$ represents a geographical region, and $\mu$ describes the distribution of a population on the region. Then the points in $S$ are chosen to be optimally reachable by the population in some sense. For $p = 1$, $S$ is chosen such that the average distance to $S$ is minimized, and for $p = \infty$, $S$ is chosen such that the maximal distance to $S$ is minimized; other values of $p$ interpolate between these problems.

In this paper, we will work only with this set distance formulation and the case $p < \infty$, and denote the $p$th powers of $e_p(\mu; S)$ and $e_{N,p}(\mu)$ by $V_p(\mu; S)$ and $V_{N,p}(\mu)$ respectively. For reference on other equivalent formulations, see e.g. Graf and Luschgy \cite[Sec. 3]{quantbook}. 

We will be focusing on the asymptotics of the quantization error as $N \to \infty$. Before reviewing existing results on asymptotic quantization on $\R^d$ and Riemannian manifolds, we first give a general survey of the history and applications of the quantization problem.

\subsection{Historical notes and applications}
The origins of the quantization problem lie in the field of signal processing, with the goal of efficiently compressing analog signals (such as sounds or images) into digital ones taking values in a finite set. The problem of \emph{scalar quantization}, quantization on $\R$, has been studied since the 1940s in information theory e.g. by Bennett \cite{bennett} and by Oliver, Pierce and Shannon \cite{PCM}. The quantization problem on $\R^d$, under the name of \emph{vector quantization}, has been studied in the signal processing community starting from the 1970s, e.g. by Gersho \cite{gersho}, and afterwards by Zador \cite{zador}, Bucklew and Wise \cite{buckwise} and Graf and Luschgy \cite{quantbook} who completed the proof of Zador's theorem on $\R^d$.
For a detailed survey of quantization from the point of view of signal processing and information theory, we refer to Gray and Neuhoff \cite{grayneuhoff}. See also Pag\`es \cite{pagesintro} for an introduction to quantization and its applications in numerics.

Independently, Steinhaus \cite{steinhaus} already in 1956 considered the quantization problem on $\R^d$ in a different formalism, and afterwards in 1959, L. Fejes T\'oth \cite{Fejes-Toth:1959vj} demonstrated the asymptotic optimality of hexagonal lattices for quantization on $\R^2$. This asymptotic optimality and stability of the hexagonal lattice is also observed in similar geometric problems such as the sphere packing problem.
This result of Fejes T\'oth has been the subject of extensive study and generalization, see e.g. \cite{Gruber:1999tm, GToth:2001, Boroczky:2010}, the extension to compact Riemannian 2-manifolds \cite{Gruber2001} as well as more recent results by \cite{Caglioti:2018, Iacobelli:2018tz, Bourne:2021ue}.
There also exist more purely geometric applications of quantization, such as the approximation of convex bodies by polytopes \cite{Bourgain:1989aa, Gruber2001, Gruber2004, discapprox}, and \emph{Alexandrov's problem} of constructing a convex body or surface with prescribed Gaussian curvature \cite{Bonk:2003vi, Merigot:2016us}. These applications provide further motivation for the study of quantization for Riemannian manifolds.

The quantization problem has also been studied under different names, e.g. the \emph{optimal location problem} \cite{Bouchitte:2011tp, Bourne:2015, Bourne:2021ue}, \emph{centroidal Voronoi tessellations} \cite{cvt, cvt:2005}, or the (more restrictive) problem of \emph{$k$-means clustering} in statistics \cite{MacQueen:1967}. 
A related problem is \emph{empirical quantization} or \emph{uniform quantization}, in which one only considers the $p$-Wasserstein distance between a measure $\mu$ and empirical measures (measures of the form $\frac{1}{N} \sum_{i=1}^N \delta_{x_i}$). This problem also admits applications in e.g. image analysis, and has been studied in e.g. \cite{chevallier_2018, Kloeckner:2020, Merigot:2021} in the \emph{deterministic} setting, where one seeks to find the optimal $\{x_i\}_{i=1}^N$, and in e.g. \cite{hochbaum_steele_1982, Graf:2002ww, Dereich:2013, Garcia-Trillos:2015wv, ambrosio2019finer, Ambrosio:2019us, Ambrosio:2019wy, Benedetto:2020aa, Benedetto:2021aa} in the \emph{stochastic} setting (also referred to as \emph{random matching}), where the points $\{x_i\}_{i=1}^N$ are generated independently from a probability distribution, usually $\mu$ itself.

Recent directions in the study of quantization include the calculus of variations/$\Gamma$-convergence approach applied in \cite{Bouchitte:2011tp, Bourne:2021ue} for more general entropy as well as cardinality constraints, and the gradient flow approach taken by \cite{Caglioti_2015, Caglioti:2018, Iacobelli:2018tz} for quantization on $\R$ and $\R^2$, leading to the consideration of \emph{ultrafast diffusion equations} \cite{Iacobelli:2019aa, Iacobelli:2019wz}.

On Riemannian manifolds, the asymptotics of the quantization problem have been investigated by Gruber \cite{Gruber2001, Gruber2004} and Kloeckner \cite{discapprox} for compactly supported measures, and generalized to the non-compact case by Iacobelli \cite{iacasym}. See also the recent work by Le Brigant and Puechmorel \cite{Brigant2019uo, Brigant2019vo} introducing an algorithm for finding optimal quantizers for fixed $N$ on Riemannian manifolds.
We will now focus on existing results for the asymptotics of quantization on Euclidean space and Riemannian manifolds in more detail, and afterwards state our new contributions.

\subsection{Asymptotics of quantization on $\R^d$ and Riemannian manifolds}
In particular domains, such as $\R^d$, Riemannian manifolds and also fractal domains, the quantization error $V_{N,p}(\mu)$ has been shown to decay on the order of $N^{-p/d}$ as $N \to \infty$, where $d$ represents the dimensionality of the domain. 
Heuristically, this means that in order to decrease the error $e_{N,p}(\mu)$ by a factor of $k$, one would need to scale $N$ by $k^d$. This exponential dependence on the dimension is also present in related problems, such as \emph{optimal matching} (see e.g. survey given in \cite[\S 1.2.4]{discapprox}). More generally, it is related to the so-called \emph{curse of dimensionality} phenomenon in approximation, where the number or complexity of parameters required to achieve a given error threshold grows exponentially with the dimension of the data (cf. \cite[Thm. 1.1]{Luschgy:2015} in relation to quantization, and \cite[\S 5.2.1]{clustering} for a broader background).

For measures on $\R^d$, \emph{Zador's theorem} provides the following precise expression for the asymptotics of the quantization error:
\begin{equation}\label{eq:zadorasym}
\lim_{N \to \infty} N^{p/d} V_{N,p}(\mu) = Q_p([0,1]^d) \left( \int_{\R^d} \rho(x)^{\frac{d}{d+p}} \di x \right)^{\frac{d+p}{d}},
\end{equation}
where $\rho$ is the density of the absolutely continuous component of $\mu$ with respect to the Lebesgue measure, and $Q_p([0,1]^d) \in (0,\infty)$ is a constant referred to as the \emph{quantization coefficient} of the $d$-dimensional unit cube. The most general form of this theorem is due to Graf and Luschgy \cite[Thm. 6.2]{quantbook}, and is first proven for measures of compact support, and then generalized to measures which admit finite moments of higher order:
\begin{equation}\label{eq:momentcond}
\int_{\R^d} \|x\|^{p+\delta} \mu(x) < \infty \quad \text{for some } \delta > 0. 
\end{equation}
This condition implies also the finiteness of the right-hand side in \eqref{eq:zadorasym} \cite[Rem. 6.3]{quantbook}.
The extension from the compact to the noncompact case is facilitated by \emph{Pierce's lemma}, which provides a universal upper bound on the quantization error:

\begin{lemma}[Pierce {\cite[Thm. 1]{pierce}}, Graf and Luschgy {\cite[Lem. 6.6]{quantbook}}]
Let $d \in \N$, $p \in [1,\infty)$, $\delta > 0$.
Then there exist constants $C > 0$ and $N_0 \in \N$ depending on $d$, $p$ and $\delta$ such that, for any probability measure $\mu$ on $\R^d$, 
\[ N^{p/d} V_{N,p}(\mu) \leq C \int_{\R^d} (1+\|x\|^{p+\delta}) \di \mu(x) , \quad \textup{for all } N \geq N_0. \]
\end{lemma}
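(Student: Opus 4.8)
The plan is to reduce the noncompact estimate to the compactly-supported case by a dyadic decomposition of $\R^d$ into annuli, quantizing each annulus separately with a number of points calibrated to the mass it carries, and then summing the contributions. First I would recall the elementary scaling and monotonicity properties of $V_{N,p}$: for a cube $Q$ of side $L$, one has the uniform bound $V_{N,p}(\mu\res Q) \le C_d\, \mu(Q)\, L^p N^{-p/d}$ for all $N \ge 1$, obtained by tiling $Q$ with $\sim N$ congruent subcubes and placing one point at the center of each; this is the only place where the compact-support version of the asymptotics (or rather its crude upper bound) enters. Also, $V_{N,p}$ is subadditive with respect to splitting the measure and allocating points: if $\mu = \sum_k \mu_k$ and $N = \sum_k N_k$, then $V_{N,p}(\mu) \le \sum_k V_{N_k,p}(\mu_k)$.

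Next I would set up the decomposition. Let $A_0 = \{\|x\| \le 1\}$ and $A_k = \{2^{k-1} < \|x\| \le 2^k\}$ for $k \ge 1$, with masses $m_k = \mu(A_k)$. On $A_k$ we have $\|x\| \le 2^k$, so the diameter of $A_k$ is $\le 2^{k+1}$, and the crude cube bound gives $V_{N_k,p}(\mu\res A_k) \le C_d\, m_k\, 2^{(k+1)p} N_k^{-p/d}$ for any allocation $N_k \ge 1$. The task is to choose integers $N_k \ge 1$ with $\sum_k N_k \le N$ so that $\sum_k C_d\, m_k\, 2^{(k+1)p} N_k^{-p/d}$ is controlled by $N^{-p/d}$ times the moment integral. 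The natural choice is $N_k \approx N \cdot \theta_k / \sum_j \theta_j$ (rounded up, with a floor of $1$) for suitable weights $\theta_k$; optimizing the trade-off between $m_k 2^{kp} N_k^{-p/d}$ and the budget constraint leads to weights of the form $\theta_k = (m_k 2^{kp})^{\frac{d}{d+p}}$ up to a small correction. With this choice the sum becomes, after Hölder's inequality in the index $k$, bounded by $C N^{-p/d} \big(\sum_k (m_k 2^{kp})^{\frac{d}{d+p}}\big)^{\frac{d+p}{d}}$, and then a second application of Hölder (now splitting off a factor to absorb the geometric series) together with the elementary inequality $2^{kp} \le C_{p,\delta}\, 2^{-k\delta'} (1 + 2^{k(p+\delta)})$ for a suitable $\delta' > 0$ converts $\sum_k m_k 2^{k(p+\delta)}$ into $\int (1 + \|x\|^{p+\delta}) \di\mu$, at the cost of constants depending on $d, p, \delta$. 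This also fixes $N_0$: one needs $N$ large enough that the rounding $\lceil \cdot \rceil$ and the floor $N_k \ge 1$ do not overwhelm the budget, i.e. $N \gtrsim$ the number of nonempty annuli that must each receive at least one point; this is where the restriction $N \ge N_0$ enters, and $N_0$ depends only on $d, p, \delta$ after the moment quantity is normalized out (for a probability measure the tail mass $m_k$ decays, so only $O(\log N)$ annuli ever matter).

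The main obstacle is the bookkeeping in the allocation step: one must verify that the rounded-up allocation $N_k = \lceil N \theta_k/\Theta \rceil$ (with the convention $N_k = 0$, hence no contribution, when $\theta_k = 0$) both respects $\sum N_k \le N$ for $N \ge N_0$ and still yields $N_k^{-p/d} \le C (N\theta_k/\Theta)^{-p/d}$ whenever $\theta_k > 0$ — the rounding is harmless precisely because $\theta_k > 0$ forces $N\theta_k/\Theta \ge 1$ once $N$ is large, but making this quantitative requires controlling $\Theta = \sum_j \theta_j$ in terms of the moment integral, which is exactly the finiteness one gets from \eqref{eq:momentcond}-type control. The Hölder exponents must be chosen so that the geometric weight $2^{k(\cdot)}$ appearing from the diameter bound is beaten by the extra decay purchased from the $\delta > 0$ margin; this is the one spot where $\delta > 0$ (as opposed to $\delta = 0$) is essential, and tracking it carefully is the crux of the argument. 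Everything else — subadditivity, the cube tiling bound, summing a convergent geometric series — is routine.
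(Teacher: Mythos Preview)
The paper does not give its own proof of this lemma; it is cited from Pierce and from Graf--Luschgy without argument. The closest the paper comes is Lemma~\ref{lem:piercefloor} in the appendix, a one-dimensional floor-quantization analogue, and there the method is entirely different from yours: following Graf--Luschgy, one draws the $N$ quantizers independently from a Pareto distribution with tail exponent $\beta = \delta/p$, bounds $\E|r - F_S(r)|^p$ by a direct tail estimate $\P(|r-F_S(r)|>t) \le \exp(-N\beta(r+1)^{-\beta-1}t)$, and integrates. This random-quantizer route is short, needs no allocation bookkeeping at all, and produces an explicit constant; it is also the argument Graf--Luschgy use for the full $\R^d$ statement.

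Your deterministic dyadic-allocation approach is a genuinely different and perfectly respectable route, with the advantage of being constructive. The one place where your sketch is not yet solid is exactly the step you flag as ``the main obstacle'': with the ceiling rule $N_k = \lceil N\theta_k/\Theta\rceil$, the overshoot $\sum_k N_k - N$ is the number of nonempty annuli, which can be infinite for a general probability measure, so the assertion that $N_0$ depends only on $d,p,\delta$ does not follow from what you wrote, and the parenthetical ``only $O(\log N)$ annuli ever matter'' is a hope rather than a consequence. The fix is to \emph{impose} a cutoff $K = \lceil \frac{p}{d\delta}\log_2 N\rceil$, allocate with ceilings only for $k \le K$, and send all of $\bigcup_{k>K} A_k$ to the origin; the tail then costs at most $\sum_{k>K} m_k 2^{kp} \le 2^{-K\delta}\sum_k m_k 2^{k(p+\delta)} \lesssim N^{-p/d}\int(1+\|x\|^{p+\delta})\di\mu$, and the budget constraint becomes $N \ge 2(K+1)$, which does depend only on $d,p,\delta$. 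Once this is in place the rest of your H\"older bookkeeping goes through.
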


Zador's theorem extends to measures on Riemannian manifolds by a similar argument to $\R^d$. The analogous statement to \eqref{eq:zadorasym}, with the Lebesgue measure replaced with the Riemannian volume form, has first been proven by Kloeckner \cite{discapprox} for compactly supported measures, for which no integral condition is necessary. This result was afterwards extended by Iacobelli \cite{iacasym} to measures satisfying a more refined integrability condition depending also on the curvature of the manifold. The analogous statement to Pierce's lemma on manifolds is given as follows:

\begin{theorem}[Iacobelli {\cite[Thm. 3.1]{iacasym}}]\label{thm:iacub}
Let $M$ be a complete connected $d$-dimensional Riemannian manifold, $x_0 \in M$, $p \in [1,\infty)$, $\delta > 0$.
Then there exist constants $C > 0$, $N_0 \in \N$ depending on $M$, $x_0$, $p$ and $\delta$ such that, for any probability measure $\mu$ on $M$, 
\[ N^{p/d} V_{N,p}(\mu) \leq C \int_{M} \left(1+d(x,x_0)^{p+\delta}+\A_{x_0}(d(x,x_0))^p \right) \di \mu(x) , \quad \textup{for all } N \geq N_0, \]
where $\A_{x_0}$ measures the size of the differential of the exponential map at $x_0$, restricted to the sphere $\bS^{d-1}_r \subset T_{x_0} M$ of radius $r$ centered at $0$:
\begin{equation}\label{eq:ax0}
\A_{x_0}(r) := \sup_{\substack{v \in \bS^{d-1}_r, \\ w \in T_v \bS^{d-1}_r, \\ \|w\|_v = r}} \left\| d_v \exp_{x_0}[w] \right\|_{\exp_{x_0}(v)}.
\end{equation}
\end{theorem}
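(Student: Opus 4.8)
The plan is to adapt Graf and Luschgy's proof of Pierce's lemma on $\R^d$ (\cite[Lem.~6.6]{quantbook}) to the manifold, transporting it through the exponential map at $x_0$ and keeping track of the metric distortion this introduces --- which is exactly what the quantity $\A_{x_0}$ records. Since $M$ is complete and connected, $\exp_{x_0}\colon T_{x_0}M\cong\R^d\to M$ is onto by Hopf--Rinow, and the cut locus of $x_0$ has Lebesgue measure zero; so one may pull $\mu$ back to a probability measure $\nu$ on $\R^d$ with $(\exp_{x_0})_\ast\nu=\mu$, concentrated on minimizing tangent vectors, so that $\|v\|=d(\exp_{x_0}(v),x_0)$ for $\nu$-a.e.\ $v$, whence $\int\|v\|^{p+\delta}\di\nu=\int d(\cdot,x_0)^{p+\delta}\di\mu$ and $\int\A_{x_0}(\|v\|)^p\di\nu=\int\A_{x_0}(d(\cdot,x_0))^p\di\mu$. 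It then suffices, for $N\geq N_0$, to exhibit a set $S\subset M$ with $\#S\leq N$ such that $\int_{\R^d}d(\exp_{x_0}(v),S)^p\di\nu(v)\leq CN^{-p/d}\int_{\R^d}\bigl(1+\|v\|^{p+\delta}+\A_{x_0}(\|v\|)^p\bigr)\di\nu$; for the finitely many $N<N_0$ one simply uses $V_{N,p}(\mu)\leq\int d(\cdot,x_0)^p\di\mu$ and enlarges $C$.

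For the distortion estimate one invokes the Gauss lemma: $d_v\exp_{x_0}$ preserves the splitting of $T_v(T_{x_0}M)$ into the radial line $\R v$ and its orthogonal complement, acting as an isometry on the former and, on the latter, with operator norm $\leq\A_{x_0}(\|v\|)/\|v\|$, this being precisely the content of the definition of $\A_{x_0}$. Estimating the length of the image of a radial segment followed by a spherical arc then gives, whenever $\|v-w\|\leq\tfrac12\|v\|$,
\[ d\bigl(\exp_{x_0}(v),\exp_{x_0}(w)\bigr)\;\leq\;C_d\,\max\Bigl\{1,\tfrac{\A_{x_0}(\|v\|)}{\|v\|}\Bigr\}\,\|v-w\|, \]
so that near the Euclidean sphere of radius $r$ the map $\exp_{x_0}$ is Lipschitz with constant $\lesssim\max\{1,\A_{x_0}(r)/r\}$; the companion Jacobian bound $|\det d_v\exp_{x_0}|\leq(\A_{x_0}(\|v\|)/\|v\|)^{d-1}$ controls the Riemannian volume of $\exp_{x_0}$-images of Euclidean sets, on which the covering estimates below rest.

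One then decomposes $\R^d$ into the unit ball $B(0,1)$ and spherical shells $S_k=\{v:\|v\|\in[r_k,r_{k+1})\}$, the radii $r_k$ being chosen so that both $\|v\|$ and the local Lipschitz factor $\max\{1,\A_{x_0}(\|v\|)/\|v\|\}$ vary only by a bounded factor over each $S_k$ --- which forces a subdivision finer than the dyadic one precisely where $\A_{x_0}$ grows rapidly. Writing $\rho_k:=r_{k+1}$, on each $S_k$ the estimates of the previous step turn a Euclidean net into a net of $\exp_{x_0}(S_k)\subset M$ and bound the induced quantization error against $\int_{S_k}(\rho_k^p+\A_{x_0}(\|v\|)^p)\di\nu$, with the improved exponent that one gains from the thinness of a refined shell; on $B(0,1)$, $\exp_{x_0}$ is bi-Lipschitz onto a compact set, and including $x_0$ in $S$ guarantees $d(\cdot,S)\leq\|v\|$ everywhere, which controls the shells left without a point. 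Distributing a budget of $N$ points among the shells --- assigning points to an initial block of them, chosen large enough in terms of $N$ that the untreated tail is absorbed by means of the spare exponent $\delta$ --- and performing the usual Lagrange/H\"older optimization of the allocation reduces the problem to bounding a sum $\sum_k a_k^{d/(d+p)}$, with $a_k:=\int_{S_k}(\rho_k^p+\A_{x_0}(\|v\|)^p)\di\nu$, by $C\bigl(\int(1+\|v\|^{p+\delta}+\A_{x_0}(\|v\|)^p)\di\nu\bigr)^{d/(d+p)}$; the $\rho_k^p$ part is handled exactly as in Graf--Luschgy, using $\rho_k^p\,\nu(S_k)\lesssim r_k^{-\delta}\int_{S_k}\|v\|^{p+\delta}\di\nu$ to extract a summable geometric factor, and the $\A_{x_0}$ part is treated likewise, using that $\A_{x_0}$ is essentially constant on each $S_k$ and that the effective weight decays geometrically along the refined shells. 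Raising to the power $(d+p)/d$ then finishes the proof.

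The third step --- the choice of the shell decomposition, the covering estimates attached to it, and the final summation --- is where I expect the real work to lie. Since no curvature hypothesis is imposed, $\A_{x_0}$ may grow arbitrarily fast, so the shells must be adapted to its growth rather than merely to distance; one must estimate the covering numbers of the metric shells $\{d(\cdot,x_0)\in[r_k,r_{k+1})\}$ uniformly over all probability measures $\mu$ with the sharp dependence on $\A_{x_0}$, and in particular exploit that such a shell is \emph{thin} --- its covering number behaving like that of a thickened $(d-1)$-sphere, not of a full $d$-ball of the same diameter --- since the cruder bound already fails on, e.g., hyperbolic space. Relatedly, the summation genuinely uses \emph{both} integrability hypotheses at once: $\int d(\cdot,x_0)^{p+\delta}\di\mu<\infty$ yields geometric decay on the shells where $\A_{x_0}$ is tame, and $\int\A_{x_0}(d(\cdot,x_0))^p\di\mu<\infty$ does so where $\A_{x_0}$ explodes; and one must simultaneously keep the number of assigned shells small enough that each can receive at least one point, with $N_0$ depending only on $M$, $x_0$, $p$, $\delta$ and not on $\mu$. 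Balancing these competing requirements is the technical heart of the argument.
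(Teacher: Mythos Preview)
The theorem is quoted from \cite{iacasym} and is not proved directly here; the paper instead establishes the more general Theorem~\ref{thm:quantub} for geodesic spaces with $O(f)$ covering growth, and recovers the present statement via Example~\ref{ex:ax0covnum}, which shows that any complete connected Riemannian manifold has $O(\A_{x_0})$ covering growth around $x_0$ (because $\exp_{x_0}|_{\bS^{d-1}_R}$ is $\A_{x_0}(R)/R$-Lipschitz).

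That route is structurally quite different from yours. You pull $\mu$ back to $T_{x_0}M$ through $\exp_{x_0}$, decompose into shells whose radii are adapted to the growth of $\A_{x_0}$, and allocate points among the shells by a H\"older/Lagrange optimization in the style of \cite{quantbook}. The paper instead stays on $M$ and \emph{decouples} the problem into a radial and an angular part: it pushes $\mu$ forward to $\mu_1:=(d(\cdot,x_0))_\#\mu$ on $[0,\infty)$, applies a one-dimensional Pierce-type lemma (Lemma~\ref{lem:piercefloor}) to choose $k$ radii $R_1<\cdots<R_k$, and then covers each sphere $\partial B_{R_i}(x_0)$ with exactly $k^{d-1}$ points, for a total of $k^d+1$. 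Since $d(x,\partial B_R(x_0))=d(x,x_0)-R$ whenever $d(x,x_0)\geq R$ (Lemma~\ref{lem:distbd}), the cost splits as a radial mismatch $\sum_i\int_{R_i}^{R_{i+1}}(R-R_i)^p\di\mu_1$, handled by the $1$D Pierce bound and producing the $d(\cdot,x_0)^{p+\delta}$ term, plus an angular term $\sum_i\int_{R_i}^{R_{i+1}}r_{k^{d-1}}(\partial B_{R_i}(x_0))^p\di\mu_1$, which the covering-growth bound $k\,r_{k^{d-1}}(\partial B_R(x_0))\lesssim\A_{x_0}(R)$ turns directly into $k^{-p}\int\A_{x_0}(d(\cdot,x_0))^p\di\mu$.

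This eliminates precisely the step you identify as the technical heart: there is no need to adapt the shell widths to the growth of $\A_{x_0}$, no H\"older allocation, and no joint summation mixing the two integrability hypotheses --- the two terms on the right-hand side arise from two independent estimates, and the point allocation is uniform ($k^{d-1}$ per sphere). Your strategy is closer in spirit to the original argument in \cite{iacasym} and should be completable, but the adaptive-shell bookkeeping you anticipate is real work; the paper's radial/angular splitting sidesteps it entirely, with the additional dividend that only the geodesic-space structure (Lemma~\ref{lem:distbd}) is used in the main estimate, the exponential map entering solely through the sphere-covering bound of Example~\ref{ex:ax0covnum}.
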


The generalization of Zador's theorem for non-compactly supported measures is then deduced from the finiteness of the integral expression:

\begin{theorem}[Iacobelli {\cite[Thm. 1.4]{iacasym}}]
Let $M$ be a complete connected $d$-dimensional Riemannian manifold, $p \in [1,\infty)$. Let $\mu$ be a probability measure on $M$ such that
\[ \int_{M} \left(d(x,x_0)^{p+\delta}+\A_{x_0}(d(x,x_0))^p \right) \di \mu(x) < \infty \]
for some $x_0 \in M$ and $\delta > 0$. Then
\begin{equation}
\lim_{N \to \infty} N^{p/d} V_{N,p}(\mu) = Q_p([0,1]^d) \left( \int_{M} \rho(x)^{\frac{d}{d+p}} \di \vol_M(x) \right)^{\frac{d+p}{d}} < \infty,
\end{equation}
where $\rho$ is the density of the absolutely continuous component of $\mu$ with respect to the Riemannian volume form $\vol_M$ on $M$.
\end{theorem}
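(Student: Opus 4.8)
\emph{Plan.} The plan is to bootstrap from the compact case — Kloeckner's theorem \cite{discapprox}, which gives the stated limit for any compactly supported finite measure on $M$ with no integrability hypothesis — by truncating $\mu$ to large geodesic balls, controlling the escaping mass with Theorem~\ref{thm:iacub}, and using a change of variables to geodesic polar coordinates to see that the right-hand side is finite. Write $a := d/(d+p) \in (0,1)$, fix the exhaustion $K_m := \overline{B(x_0,m)}$ (each $K_m$ compact by Hopf--Rinow), and set $\psi(x) := 1 + d(x,x_0)^{p+\delta} + \A_{x_0}(d(x,x_0))^p$, so that the hypothesis reads $\int_M \psi \,\di\mu < \infty$. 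The first thing I would check is that $\int_M \rho^{a}\,\di\vol_M < \infty$, which — since $Q_p([0,1]^d) \in (0,\infty)$ — is exactly the finiteness of the claimed limit. Hölder's inequality with exponents $1/a = (d+p)/d$ and $1/(1-a) = (d+p)/p$, together with $\rho\,\di\vol_M \le \di\mu$, gives
\[ \int_M \rho^{a}\,\di\vol_M \;\le\; \Big(\int_M \rho\,\psi\,\di\vol_M\Big)^{a}\Big(\int_M \psi^{-d/p}\,\di\vol_M\Big)^{1-a} \;\le\; \Big(\int_M \psi\,\di\mu\Big)^{a}\Big(\int_M \psi^{-d/p}\,\di\vol_M\Big)^{1-a}; \]
the first factor is finite by hypothesis, and for the second I would pass to geodesic polar coordinates $x = \exp_{x_0}(r\xi)$ on the co-null domain of injectivity of $\exp_{x_0}$, where $d(x,x_0) = r$ and — by the Gauss lemma, Hadamard's inequality, and the very definition of $\A_{x_0}$ (the image of a unit sphere-tangent vector at radius $r$ has norm $\le \A_{x_0}(r)/r$) — the volume element is bounded by $\A_{x_0}(r)^{d-1}\,dr\,d\xi$. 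This reduces matters to showing
\[ \int_0^\infty \frac{\A_{x_0}(r)^{d-1}}{\big(1 + r^{p+\delta} + \A_{x_0}(r)^p\big)^{d/p}}\,dr < \infty, \]
which follows by estimating the integrand separately where $\A_{x_0}(r) \le r^{1+\delta/p}$ (bound the denominator below by the $r^{p+\delta}$ term) and where $\A_{x_0}(r) > r^{1+\delta/p}$ (bound it below by the $\A_{x_0}(r)^p$ term): in both cases the integrand is $\lesssim r^{-1-\delta/p}$ for $r \ge 1$, and it is bounded near $0$ by continuity of $\A_{x_0}$. Monotone convergence then also gives $\int_{K_m}\rho^{a}\,\di\vol_M \uparrow \int_M \rho^{a}\,\di\vol_M$.

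\emph{Lower bound.} Because $\mu \ge \mu|_{K_m}$ as measures, $V_p(\mu; S) \ge V_p(\mu|_{K_m}; S)$ for every finite $S$, hence $V_{N,p}(\mu) \ge V_{N,p}(\mu|_{K_m})$. The measure $\mu|_{K_m}$ is compactly supported with absolutely continuous density $\rho\,\One_{K_m}$, so Kloeckner's theorem yields $N^{p/d}V_{N,p}(\mu|_{K_m}) \to Q_p([0,1]^d)\big(\int_{K_m}\rho^{a}\,\di\vol_M\big)^{1/a}$; letting $m \to \infty$ and using the first step, $\liminf_N N^{p/d}V_{N,p}(\mu) \ge Q_p([0,1]^d)\big(\int_M \rho^{a}\,\di\vol_M\big)^{1/a}$.

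\emph{Upper bound.} Fix $\eta \in (0,1)$ and $m \in \N$. For $N$ large, set $N_1 := \lfloor (1-\eta)N \rfloor$ and $N_2 := N - N_1$, and concatenate near-optimal quantizers of sizes $N_1$ and $N_2$ for $\mu|_{K_m}$ and $\mu|_{M \setminus K_m}$; since the union is admissible for $\mu$, this gives, up to a negligible recombination error, $V_{N,p}(\mu) \le V_{N_1,p}(\mu|_{K_m}) + V_{N_2,p}(\mu|_{M\setminus K_m})$. Multiplying by $N^{p/d}$: the first summand tends, by Kloeckner's theorem and $N/N_1 \to (1-\eta)^{-1}$, to $(1-\eta)^{-p/d}Q_p([0,1]^d)\big(\int_{K_m}\rho^{a}\,\di\vol_M\big)^{1/a}$; the second is, by Theorem~\ref{thm:iacub} applied to the normalisation of $\mu|_{M\setminus K_m}$ (valid once $N_2 \ge N_0$, which holds for $N$ large since $N_2 \sim \eta N$), at most $(N/N_2)^{p/d}\,C\int_{M\setminus K_m}\psi\,\di\mu \to \eta^{-p/d}C\int_{M\setminus K_m}\psi\,\di\mu$. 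Hence
\[ \limsup_N N^{p/d}V_{N,p}(\mu) \;\le\; (1-\eta)^{-p/d}Q_p([0,1]^d)\Big(\int_{K_m}\rho^{a}\,\di\vol_M\Big)^{1/a} + \eta^{-p/d}C\int_{M\setminus K_m}\psi\,\di\mu. \]
Letting $m \to \infty$ — the last integral vanishes as the tail of the convergent integral $\int_M \psi\,\di\mu$, and $\int_{K_m}\rho^{a} \uparrow \int_M \rho^{a}$ — and then $\eta \to 0$ produces the matching upper bound, which together with the lower bound finishes the proof.

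\emph{Main obstacle.} I expect the only genuinely substantive point to be the estimate in the first step: the volume-element bound $\di\vol_M \le \A_{x_0}(r)^{d-1}\,dr\,d\xi$ and the one-dimensional integral it feeds are the sole place where the precise form of the integrability hypothesis is used and where the large-scale geometry of $M$ actually enters. Everything else is the standard truncation-and-recombination scheme, the only delicate bookkeeping being the order of limits ($N \to \infty$, then $m \to \infty$, then $\eta \to 0$).
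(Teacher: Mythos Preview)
Your proof is correct and follows the same truncate--and--recombine scheme that the paper uses (note that the paper does not prove this cited theorem directly, but proves its generalization, Theorem~\ref{thm:quantasym}, by exactly this method): invoke the compact case due to Kloeckner, control the tail via the Pierce-type bound (Theorem~\ref{thm:iacub}), and combine. The only cosmetic differences are that the paper packages your manual $N_1/N_2$ budget split with parameter $\eta$ into the subadditivity Lemma~\ref{lem:quantcoeffadd} (so the limit $\eta\to 0$ is already built in), and it uses Ulam's lemma with a generic compact $K$ rather than the explicit ball exhaustion $K_m$.

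The one substantive addition in your write-up is the direct proof that $\int_M \rho^{d/(d+p)}\,\di\vol_M<\infty$ via H\"older and the geodesic-polar volume bound $\di\vol_M\le \A_{x_0}(r)^{d-1}\,dr\,d\xi$. This is correct (your case split on $\A_{x_0}(r)\lessgtr r^{1+\delta/p}$ works as stated), but it is not needed: in the paper's argument finiteness of the right-hand side falls out automatically, since the recombination step already yields $\overline{Q}_{p,d}(\mu)^r\le Q^{(\mu)}_{p,d}(K)^r+(C\varepsilon)^{r/p}<\infty$ for some compact $K$, and the final identity then forces $\int_M\rho^{d/(d+p)}\,\di\vol_M<\infty$. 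Your approach has the virtue of making the role of $\A_{x_0}$ in the finiteness explicit, at the cost of a page of estimates the truncation scheme renders unnecessary.
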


The quantity $\A_{x_0}(r)$ can be estimated in terms of sectional curvature bounds, yielding the following corollary:

\begin{corol}[Iacobelli {\cite[Cor. 1.6]{iacasym}}]
Let $M$ be a complete connected $d$-dimensional Riemannian manifold, with sectional curvature bounded from below by $-\kappa^2$ for some $\kappa \geq 0$, $p \in [1,\infty)$. Let $\mu$ be a probability measure on $M$ such that
\[ \int_{M} (d(x,x_0)^{p+\delta}+ e^{\kappa p d(x,x_0))} ) \di \mu(x) < \infty \]
for some $x_0 \in M$ and $\delta > 0$. Then
\begin{equation}
\lim_{N \to \infty} N^{p/d} V_{N,p}(\mu) = Q_p([0,1]^d) \left( \int_{M} \rho(x)^{\frac{d}{d+p}} \di \vol_M(x) \right)^{\frac{d+p}{d}}.
\end{equation}
\end{corol}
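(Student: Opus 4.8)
The plan is to derive the corollary as an immediate consequence of the manifold Zador theorem \cite[Thm.~1.4]{iacasym} stated above. For this it is enough to establish the pointwise estimate
\[
\A_{x_0}(r)\;\le\;\frac{\sinh(\kappa r)}{\kappa}\qquad\text{for all }r>0
\]
(the right-hand side being read as $r$ when $\kappa=0$), and then to check that the integrability hypothesis of \cite[Thm.~1.4]{iacasym} is implied by the one assumed here. Note that $\sinh(\kappa r)/\kappa$ is exactly the value of $\A$ in the simply connected space form of constant curvature $-\kappa^2$, so it is the natural sharp comparison; and since $\sinh(\kappa r)/\kappa\le e^{\kappa r}/(2\kappa)$ for $\kappa>0$, it produces precisely the exponential weight $e^{\kappa p\,d(x,x_0)}$ appearing in the statement.

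For the estimate on $\A_{x_0}$ I would use the identification of the differential of the exponential map with Jacobi fields. Fix $v\in\bS^{d-1}_r$ and $w\in T_v\bS^{d-1}_r$ with $\|w\|_v=r$, let $\gamma$ be the unit-speed geodesic with $\gamma(0)=x_0$ and $\gamma'(0)=v/r$, and let $J$ be the Jacobi field along $\gamma$ with $J(0)=0$ and covariant derivative $J'(0)=w/r$; then $d_v\exp_{x_0}[w]=J(r)$, with $J\perp\gamma'$ and $\|J'(0)\|_{x_0}=1$. Invoking the Rauch comparison theorem with the lower curvature bound $-\kappa^2$, comparing $\gamma$ against a unit-speed geodesic in the space form of curvature $-\kappa^2$ (which for $\kappa=0$ is $\R^d$, so that the bound reads $\A_{x_0}(r)\le r$), gives $\|J(t)\|\le\sinh(\kappa t)/\kappa$; taking $t=r$ and then the supremum over $v$ and $w$ yields $\A_{x_0}(r)\le\sinh(\kappa r)/\kappa$.

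Granting this, the hypothesis of \cite[Thm.~1.4]{iacasym} holds: when $\kappa>0$,
\[
\int_M \A_{x_0}\!\big(d(x,x_0)\big)^p\di\mu(x)\;\le\;\frac{1}{(2\kappa)^p}\int_M e^{\kappa p\,d(x,x_0)}\di\mu(x)\;<\;\infty
\]
by assumption, and when $\kappa=0$ one has $\A_{x_0}(d(x,x_0))^p\le d(x,x_0)^p\le 1+d(x,x_0)^{p+\delta}$, whose $\mu$-integral is finite because $\mu$ is a probability measure with $\int_M d(x,x_0)^{p+\delta}\di\mu<\infty$. Combined with the (identical) term $\int_M d(x,x_0)^{p+\delta}\di\mu(x)<\infty$ this yields $\int_M\big(d(x,x_0)^{p+\delta}+\A_{x_0}(d(x,x_0))^p\big)\di\mu(x)<\infty$, so \cite[Thm.~1.4]{iacasym} applies and its conclusion is exactly the asserted limit.

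The step I expect to be the main obstacle is the Jacobi field bound itself. The Rauch comparison theorem in its standard form delivers $\|J(t)\|\le\sinh(\kappa t)/\kappa$ only up to the first conjugate point of $\gamma$, whereas $\A_{x_0}(r)$ is a supremum over all directions and over arbitrarily large radii, so one has to argue that the comparison persists past conjugate radii; this is precisely where the lower curvature bound must be exploited with care, since the scalar Riccati/Sturm comparison is of no help here (a lower bound on sectional curvature gives no upper bound on $\|J\|''$). Once this comparison-geometry input is secured, all the remaining steps above are routine.
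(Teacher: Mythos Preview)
The paper does not give its own proof of this corollary: it is quoted verbatim from \cite{iacasym} as part of the background survey in the introduction, and no argument for it appears anywhere in the present paper. So there is nothing here to compare your proposal against.

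That said, your outline is exactly the standard derivation: bound $\A_{x_0}(r)$ via Rauch and then invoke \cite[Thm.~1.4]{iacasym}. Your one flagged concern, however, is not actually an obstacle. In Rauch's first comparison theorem the hypothesis ``no conjugate points on $(0,r]$'' refers to the geodesic in the \emph{comparison} space $\tilde M$, not to $\gamma$ in $M$. Here $\tilde M$ is the simply connected space form of curvature $-\kappa^2$ (hyperbolic space for $\kappa>0$, Euclidean space for $\kappa=0$), which has no conjugate points whatsoever, so the conclusion $\|J(t)\|\le \sinh(\kappa t)/\kappa$ holds for \emph{all} $t>0$ regardless of whether $\gamma$ in $M$ has conjugate points. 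Your worry that ``a lower bound on sectional curvature gives no upper bound on $\|J\|''$'' is correct as far as the naive scalar ODE comparison goes, but Rauch does not proceed that way; it uses the index form, and the only obstruction is a zero of the comparison Jacobi field $\tilde J$, which never occurs. With that clarification your argument is complete.
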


In particular, Pierce's lemma and Zador's theorem hold for manifolds of nonnegative sectional curvature in the same form as for Euclidean space.
However, for manifolds of constant negative curvature this exponential moment condition is sharp: \cite[Thm. 1.7]{iacasym} constructs a (singular) measure $\mu$ on the hyperbolic plane $\H^2$ for which all polynomial moments $\int_{\H^2} d(x,x_0)^p \di \mu(x)$ are finite but $\lim_{N \to \infty} N^{p/d} V_{N,p}(\mu) = \infty$.

\subsection{Definitions and main results}

In this paper, we demonstrate that an exponential moment condition is not strictly necessary even for nonpositively curved manifolds when the sectional curvature is not constant. We achieve this by replacing the local quantity $\A_{x_0}$ with global covering numbers capturing the large-scale growth of the manifold, which can be defined in a general metric space setting:

\begin{defin}
Let $X$ be a metric space.
Given a compact subset $A \subseteq X$ and $N \in \N$, the $N$th \emph{covering radius} of $A$ is the minimal $r > 0$ such that $A$ can be covered by $N$ open balls of radius $r$:
\begin{align*} 
r_N(A) := & \inf\left\{ r > 0 \mid \exists \{x_i\}_{i=1}^N \subseteq X, A \subseteq \bigcup_{i=1}^N B_r(x_i) \right\}.
\end{align*}
Fix $d > 0$, and let $f \colon [0,\infty) \to [0,\infty)$ be monotone nondecreasing. $X$ is said to have \emph{$O(f)$ covering growth} (of dimension $d$) around $x_0 \in X$ if
\begin{equation}\label{eq:covgrow} k r_{k^{d-1}}(\partial B_R(x_0)) \leq f(R) , \quad \textup{for all } k \in \N, R > 0. \end{equation}
\end{defin}

\begin{remark*}
For Riemannian manifolds, $d$ will be the same as the dimension of the manifold. In general, the hypothesis that $\sup_{k \in \N} k r_{k^{d-1}}(\partial B_R(x_0)) < \infty$ implies that spheres $\partial B_R(x_0)$ are at most $d-1$-dimensional in the sense of box-counting dimension (see e.g. \cite[Sect. 11.2]{quantbook}). 

For our purposes, it suffices for \eqref{eq:covgrow} to be satisfied for $k \geq k_0$ and $R \geq R_0$ for constants $k_0, R_0 > 0$, but not when $k \geq k_0(R)$ dependent on $R$. Up to a factor of $2$, one can equivalently require that $N^{\frac{1}{d-1}} r_N(\partial B_R(x_0)) \leq f(R)$ for all $N \in \N$, since for each $N \in \N$ there exists $k \in \N$ such that $k^{d-1} \leq N \leq (k+1)^{d-1} \leq 2^{d-1} k^{d-1}$. 
\end{remark*}


As detailed in Section \ref{sect:riclb}, Riemannian manifolds of nonnegative Ricci curvature including Euclidean spaces have $O(R)$ covering growth, while e.g. hyperbolic spaces only have $O(e^{\alpha R})$ covering growth for some $\alpha > 0$. As a special case, Example \ref{ex:twodim} shows that the covering growth of two-dimensional Riemannian manifolds coincides exactly with the growth of the perimeters of spheres, although this correspondence does not extend to higher dimensions.

In general, as demonstrated in Example \ref{ex:ax0covnum}, any complete connected Riemannian manifold has $O(\A_{x_0})$ covering growth around $x_0$. 
As such, Theorem \ref{thm:iacub} admits the following generalization, valid in the setting of geodesic spaces:

\begin{theorem}\label{thm:quantub}
Let $X$ be a complete geodesic space. Fix a point $x_0 \in X$, and suppose $X$ has $O(f)$ covering growth of dimension $d$ around $x_0$.

Let $p \in [1,\infty)$, and let $\mu$ a finite Borel measure on $X$ with finite moments up to order $p+\delta$ for some $\delta > 0$. Then there exists a constant $C > 0$ independent of $\mu$, for which
\[ k^p V_{k^d+1,p}(\mu) \leq C \int_X \left(1+d(x, x_0)^{p+\delta}+f(d(x,x_0))^p \right) \di \mu(x), \quad \text{for all } k \in \N. \]
Consequently, 
\[ \limsup_{N\to\infty} N^{p/d} V_{N,p}(\mu) \leq C \int_X \left(1+d(x, x_0)^{p+\delta}+f(d(x,x_0))^p \right) \di \mu(x). \]
\end{theorem}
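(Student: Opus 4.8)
The overall strategy is to reduce the problem, via a dyadic decomposition of $X$ into annuli centered at $x_0$, to a counting problem on each annulus that is controlled by the covering growth hypothesis; the key is to allocate points $N_j$ to the $j$th annulus so that the contributions form a convergent series once summed against the moment $\int_X (1+d(x,x_0)^{p+\delta}) \di\mu$.

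Concretely, first I would fix $k \in \N$ and set up the annuli $A_j := B_{2^{j+1}R_0}(x_0) \setminus B_{2^j R_0}(x_0)$ for $j \geq 0$, together with the central ball $A_{-1} := B_{R_0}(x_0)$, for a scale $R_0$ to be chosen. On each $A_j$ I want to build a finite set $S_j$ of controlled cardinality whose distance function dominates $d(\cdot, S_j)$ on $A_j$ by something like $C \cdot r_{N_j}(\partial B_{2^{j+1}R_0}(x_0))$ plus a term handling the radial direction. Since $X$ is a geodesic space, every point of $A_j$ lies within radial distance $2^j R_0$ of some sphere $\partial B_R(x_0)$, so covering finitely many spheres at radii spaced $\sim 2^j R_0/L$ apart (for a parameter $L$) and taking $S_j$ to be the union of the corresponding centers gives $d(x, S_j) \lesssim 2^j R_0 / L + r_{N_j'}(\partial B_R(x_0))$ for $x \in A_j$, where $\# S_j \lesssim L \cdot N_j'$. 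Applying the covering growth bound $k' r_{(k')^{d-1}}(\partial B_R(x_0)) \leq f(R) \leq f(2^{j+1}R_0)$ with an appropriate $k'$, and choosing $L \sim k'$, yields $d(x, S_j)^p \lesssim (k')^{-p}\big((2^j R_0)^p + f(2^{j+1}R_0)^p\big)$ at a cost of $\# S_j \lesssim (k')^d$ points.

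Next I would sum: taking $S = \bigcup_j S_j$ over $j$ up to some cutoff $J(k)$ (beyond which $\mu(A_j)$ is so small that a single point suffices), the total cardinality is $\sum_j \# S_j \lesssim \sum_j (k'_j)^d$, which I need to keep below $k^d + 1$. This forces a geometric choice $k'_j \sim k \cdot 2^{-\beta j}$ for a suitable exponent $\beta > 0$ so that $\sum_j (k'_j)^d \lesssim k^d$ converges. Plugging this back in, $V_{k^d+1,p}(\mu) \leq \sum_j \int_{A_j} d(x, S)^p \di\mu \lesssim k^{-p}\sum_j 2^{\beta p j}\big((2^j R_0)^p + f(2^{j+1}R_0)^p\big)\mu(A_j)$. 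On $A_j$ one has $d(x, x_0) \geq 2^j R_0$, and $f$ is nondecreasing, so $f(2^{j+1}R_0) \leq f(2 d(x,x_0)) \lesssim f(d(x,x_0))$ (after a routine doubling argument, or by absorbing the factor $2$ into $C$ using monotonicity on the dyadic scale) and similarly the spurious factor $2^{\beta p j}$ is absorbed into a small negative power of $d(x,x_0)$ coming from $2^{j\delta'}$ provided we arrange $\beta$ small relative to the gap $\delta$; this is exactly where the extra order $\delta$ in the moment is consumed. The upshot is $k^p V_{k^d+1,p}(\mu) \lesssim \int_X (1 + d(x,x_0)^{p+\delta} + f(d(x,x_0))^p)\di\mu$, and the $\limsup$ statement follows since for arbitrary $N$ one picks $k$ with $k^d + 1 \leq N$ and $k \sim N^{1/d}$, using monotonicity of $V_{N,p}$ in $N$.

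The main obstacle is the bookkeeping in the trade-off between the number of points allocated per annulus and the resulting error, i.e., choosing the decay rate $k'_j \sim k 2^{-\beta j}$ and the radial subdivision parameter $L$ so that simultaneously (i) the total point count stays $\leq k^d + 1$, (ii) the radial error $2^j R_0 / L$ is dominated by the same expression, and (iii) the geometric factors $2^{\beta p j}$ accumulated from the allocation are reabsorbed into the $\delta$-surplus of the moment — all with constants independent of $\mu$. A secondary technical point is handling the tail $j > J(k)$: there one bounds $d(x, S)$ crudely by $d(x, x_0) + \mathrm{const}$ using one well-placed point, and the smallness of $\mu(A_j)$ together with the $(p+\delta)$-moment makes this tail negligible. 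The innermost ball $A_{-1}$ is compact, so finitely many points give error $\to 0$ there and it contributes only to the lower-order $1$ in the integrand.
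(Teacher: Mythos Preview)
Your dyadic scheme has a real gap in the handling of the $f$-term. With $k'_j \sim k\cdot 2^{-\beta j}$, the covering-growth hypothesis gives error $\lesssim f(d(x,x_0))/k'_j$ on $A_j$, so after multiplying by $k^p$ the $f$-contribution becomes
\[
\sum_{j\ge 0} 2^{\beta p j}\int_{A_j} f(d(x,x_0))^p\,\di\mu(x)
\;\gtrsim\;\int_{\{d(\cdot,x_0)\ge R_0\}} d(x,x_0)^{\beta p}\, f(d(x,x_0))^p\,\di\mu(x),
\]
since $2^{\beta p j}\gtrsim (d(x,x_0)/R_0)^{\beta p}$ on $A_j$. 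The $\delta$-surplus lets you absorb $2^{\beta p j}$ into the \emph{polynomial} term $(2^j R_0)^p$, but it does nothing for $f(d)^p$: for general monotone $f$ there is no inequality $r^{\beta p} f(r)^p \le C\bigl(r^{p+\delta}+f(r)^p\bigr)$ --- take $f(r)=e^{\alpha r}$ and let $r\to\infty$. So your argument delivers at best an integrand $1+d^{p+\delta}+d^{\varepsilon}f(d)^p$ for some $\varepsilon>0$, strictly weaker than the theorem; indeed there are measures on hyperbolic space for which your bound is infinite while the theorem's is finite. The root cause is that you degrade the \emph{sphere}-covering resolution geometrically in $j$, which attaches a growing weight to $f$.

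The paper avoids this by a different decoupling: it picks $k$ radii $0<R_1<\cdots<R_k$ and covers \emph{every} sphere $\partial B_{R_i}(x_0)$ with the same number $k^{d-1}$ of points (plus $x_0$, for a total of $k^d+1$). On $[R_i,R_{i+1})$ the error is at most $(R-R_i)+r_{k^{d-1}}(\partial B_{R_i}(x_0))\le (R-R_i)+f(R)/k$, so the $f$-part contributes exactly $k^{-p}\int f(d)^p\di\mu$ with no extraneous factor. The remaining radial part $\sum_i\int_{R_i}^{R_{i+1}}(R-R_i)^p\,\di\mu_1(R)$, with $\mu_1:=(d_{x_0})_\#\mu$ the pushforward to $[0,\infty)$, is a one-dimensional floor-quantization error and is handled by Pierce's lemma on $\R$; this is where the $R_i$ are chosen and where the $\delta$ is spent. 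In your language: keep the per-sphere budget constant at $k^{d-1}$ and let only the placement of the $k$ radii adapt; then no weight ever multiplies $f$.
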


As a direct consequence, on Riemannian manifolds we deduce the following extension of Zador's theorem:

\begin{theorem}\label{thm:quantasym}
Let $M$ be a complete connected $d$-dimensional Riemannian manifold. Fix a point $x_0 \in M$, and suppose $M$ has uniform $O(f)$ covering growth (of dimension $d$) around $x_0$.

Let $p \in [1,\infty)$, and let $\mu$ be a finite Borel measure on $M$ such that, for some $\delta > 0$,
\[ \int_M \left(d(x, x_0)^{p+\delta}+f(d(x,x_0))^p \right) \di \mu(x) < \infty. \]
Then 
\begin{equation}\label{eq:riemasym}
\lim_{N \to \infty} N^{p/d} V_{N,p}(\mu) = Q_p([0,1]^d) \left( \int_{M} \rho(x)^{\frac{d}{d+p}} \di x \right)^{\frac{d+p}{d}},
\end{equation}
where $\rho$ is the density of the absolutely continuous component of $\mu$ with respect to the Riemannian volume form on $M$, and $Q_p([0,1]^d) \in (0,\infty)$ is again the quantization coefficient of the $d$-dimensional unit cube.
\end{theorem}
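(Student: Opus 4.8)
The plan is to run the classical two-sided argument behind Zador's theorem, as in Graf--Luschgy \cite{quantbook} and Iacobelli \cite{iacasym}: the lower bound comes from the compactly supported case on manifolds (Kloeckner \cite{discapprox}) applied to restrictions $\mu|_K$ along a compact exhaustion, while the upper bound follows by splitting $\mu$ into a bulk part and a tail part, estimating the bulk again by Kloeckner and the tail by the new generalized Pierce bound, Theorem \ref{thm:quantub}.

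\emph{Step 1 (finiteness and lower bound).} Since $M$ is complete and connected, Hopf--Rinow makes it a complete geodesic space, so Theorem \ref{thm:quantub} applies and gives $\limsup_{N} N^{p/d} V_{N,p}(\mu) \le C \int_M \big(1 + d(x,x_0)^{p+\delta} + f(d(x,x_0))^p\big)\di\mu(x) < \infty$, using the moment hypothesis and $\mu(M)<\infty$. Next fix a compact exhaustion $M = \bigcup_m K_m$. For any finite set $S$ one has $V_p(\mu;S) \ge V_p(\mu|_{K_m};S)$ by restricting the domain of integration, hence $V_{N,p}(\mu) \ge V_{N,p}(\mu|_{K_m})$; applying the compactly supported Zador theorem on manifolds to $\mu|_{K_m}$, whose absolutely continuous density with respect to $\vol_M$ is $\rho\,\One_{K_m}$ (Zador-type asymptotics being invariant under rescaling the measure), yields $\liminf_{N} N^{p/d} V_{N,p}(\mu) \ge Q_p([0,1]^d)\big(\int_{K_m}\rho^{d/(d+p)}\di\vol_M\big)^{(d+p)/d}$. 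Letting $m\to\infty$ (monotone convergence) proves the lower bound in \eqref{eq:riemasym}, and combined with the first estimate it shows in particular that $\int_M \rho^{d/(d+p)}\di\vol_M < \infty$.

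\emph{Step 2 (upper bound).} Set $a_m := \int_{M\setminus K_m}\big(1 + d(x,x_0)^{p+\delta} + f(d(x,x_0))^p\big)\di\mu(x)$, so $a_m \to 0$ by dominated convergence. Fix $m$ (if $a_m = 0$ then $\mu$ is compactly supported and Kloeckner finishes the proof, so assume $a_m>0$), and for $N$ large split $N = N_1 + N_2$ with $N_2 = \lceil \eps_m N\rceil$ for a parameter $\eps_m\in(0,1)$ to be chosen and $N_1 = N-N_2$. Decomposing $\mu = \mu|_{K_m} + \mu|_{M\setminus K_m}$, taking the union of near-optimal point sets and passing to infima gives the subadditivity estimate $V_{N,p}(\mu) \le V_{N_1,p}(\mu|_{K_m}) + V_{N_2,p}(\mu|_{M\setminus K_m})$. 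For the bulk term, $N_1 \sim (1-\eps_m)N$ together with Kloeckner's theorem gives $\limsup_N N^{p/d} V_{N_1,p}(\mu|_{K_m}) \le (1-\eps_m)^{-p/d} Q_p([0,1]^d)\big(\int_M \rho^{d/(d+p)}\di\vol_M\big)^{(d+p)/d}$. For the tail term, choose $k$ maximal with $k^d + 1 \le N_2$, so $k \sim (\eps_m N)^{1/d}$; since $V_{\cdot,p}$ is nonincreasing in the number of points, Theorem \ref{thm:quantub} applied to the finite measure $\mu|_{M\setminus K_m}$ gives $V_{N_2,p}(\mu|_{M\setminus K_m}) \le V_{k^d+1,p}(\mu|_{M\setminus K_m}) \le C k^{-p} a_m$, whence $\limsup_N N^{p/d} V_{N_2,p}(\mu|_{M\setminus K_m}) \le C\,\eps_m^{-p/d} a_m$. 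Adding the two contributions, choosing $\eps_m = \min\{\tfrac12, a_m^{d/(2p)}\}$ — so that $\eps_m \to 0$ while $\eps_m^{-p/d} a_m = a_m^{1/2} \to 0$ — and letting $m\to\infty$ yields $\limsup_N N^{p/d} V_{N,p}(\mu) \le Q_p([0,1]^d)\big(\int_M \rho^{d/(d+p)}\di\vol_M\big)^{(d+p)/d}$. Together with Step 1 this establishes \eqref{eq:riemasym}.

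\emph{Main obstacle.} The domain-splitting subadditivity and the invocation of Kloeckner's compact result are routine; the delicate point is the calibration in Step 2. One must allocate only a vanishing fraction $\eps_m$ of the $N$ points to the tail $M\setminus K_m$, so that the bulk term incurs only the harmless loss $(1-\eps_m)^{-p/d}\to 1$, yet keep $\eps_m$ from shrinking faster than $a_m^{d/p}$, so that the Pierce-type estimate $C\eps_m^{-p/d} a_m$ still tends to $0$. This balancing is where it is essential that the constant $C$ in Theorem \ref{thm:quantub} is independent of the measure, since the bound is applied to the un-normalized tails $\mu|_{M\setminus K_m}$ for every $m$; one also needs the a priori finiteness of $\int_M \rho^{d/(d+p)}\di\vol_M$ from Step 1, without which the bulk estimate would be vacuous.
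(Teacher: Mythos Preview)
Your argument is correct and follows the same overall strategy as the paper: monotonicity for the lower bound, and a bulk/tail decomposition combined with Kloeckner's compact result and Theorem \ref{thm:quantub} for the upper bound. The only difference is packaging: the paper encapsulates your Step~2 splitting into Lemma~\ref{lem:quantcoeffprops}/\ref{lem:quantcoeffadd}, obtaining the sharp subadditivity $\overline{Q}_{p,d}(\mu)^r \le Q_{p,d}^{(\mu)}(K)^r + \overline{Q}_{p,d}^{(\mu)}(M\setminus K)^r$ with $r=\tfrac{d}{d+p}$ (this is exactly the optimal choice of your $\eps_m$ via H\"older), and then sends the tail term to zero via Ulam's lemma. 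Your explicit choice $\eps_m=\min\{\tfrac12,a_m^{d/(2p)}\}$ is a hands-on version of the same balancing and avoids appealing to the abstract lemma; the paper's route is slightly cleaner but yours is equally valid.
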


This is a strictly more general condition than the one expressed in terms of $\A_{x_0}$. To demonstrate this, we consider two special cases: manifolds with Ricci curvature bounded from below, and manifolds subject to a \emph{geometric action} of a discrete group of isometries. In particular, we show in Section \ref{sect:riclb} that if the manifold has nonnegative \emph{Ricci} curvature, then it has $O(R)$ covering growth and thus Pierce's lemma holds as in Euclidean space. In the latter case, if the group of isometries is finitely generated with polynomial growth, the covering growth of the manifold is also polynomial and hence a polynomial moment condition is sufficient for Zador's theorem to hold: 

\begin{prop}
Suppose $M$ is subject to the geometric action of a group $G$ of isometries, which is of \emph{polynomial growth} of order $\alpha$. 

Then fixing $x_0 \in M$, there exists $C > 0$ such that for $R > 0$ sufficiently large and $N \in \N$ arbitrary, the following bound holds for the covering growth of spheres in $M$:
\[ N r_N(\partial B_R(x_0))^{d-1} \leq C R^{\alpha+d-1}. \]
Thus $M$ has $O(R^{\frac{\alpha}{d-1}+1})$ covering growth around any point $x_0 \in M$.
\end{prop}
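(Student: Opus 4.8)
The plan is to use the Milnor--\v{S}varc lemma together with the polynomial growth of $G$ to cover metric spheres of $M$ by a controlled number of translates of a fixed compact set, and then to subdivide each such piece at small scales using the uniformly bounded local geometry of $M$. Concretely: a complete Riemannian manifold is a proper geodesic space, so the orbit map $g \mapsto g x_0$ is a quasi-isometry from $G$, equipped with a word metric for some finite generating set $S$, to $M$; in particular there are constants $\lambda, c$ with $|g|_S \le \lambda\, d(g x_0, x_0) + c$. Fix a compact set $K \ni x_0$ with $GK = M$ and put $D := \diam K$. Polynomial growth of order $\alpha$ gives $\#\{g \in G : |g|_S \le n\} \le C_1 n^{\alpha}$ for $n$ large, hence $\#\{g : d(g x_0, x_0) \le s\} \le C_2 s^{\alpha}$ for $s$ large. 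Since every $y \in M$ lies in some translate $gK$, for which $d(g x_0, y) \le D$, the sphere $\partial B_R(x_0)$ is covered by the translates $gK$ with $d(g x_0, x_0) \le R + D$, at most $C_2 (R+D)^{\alpha} \le C_3 R^{\alpha}$ of them for $R$ large; thus $\partial B_R(x_0)$ is covered by at most $C_3 R^{\alpha}$ balls of the fixed radius $2D$.

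Next I would establish a scale-uniform covering estimate for metric spheres: there exist $\rho_0, C_0 > 0$, depending only on $M$, such that for all $q, x \in M$, all $\rho > 0$ and all $0 < r \le \rho_0$, the set $\partial B_{\rho}(q) \cap B_{\rho_0}(x)$ can be covered by at most $C_0 (\rho_0/r)^{d-1}$ balls of radius $r$. By cocompactness $M$ has bounded geometry; this lets one show that the sublevel sets $\{d(\cdot,q) \le \rho\}$ have perimeter bounded uniformly in $q$ and $\rho$ inside balls of the fixed radius $\rho_0$ --- e.g.\ via the uniform semiconcavity of $d(\cdot,q)$ away from $q$, with semiconcavity constant controlled by a (finite) curvature bound for $M$ --- whence the covering numbers of the level sets $\partial B_\rho(q)$ obey the stated $(d-1)$-dimensional bound, crucially with constants independent of $d(q,x_0)$. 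Covering each ball of radius $2D$ by a uniformly bounded number of balls of radius $\rho_0$ and combining with the previous paragraph gives, for $R$ large,
\[ \#\{\text{balls of radius } r \text{ covering } \partial B_R(x_0)\} \ \le\ C_4\, R^{\alpha}\, r^{-(d-1)}, \qquad 0 < r \le \rho_0 . \]

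It then remains to conclude for all $N \in \N$. If $N \le C_4 \rho_0^{-(d-1)} R^{\alpha}$, the trivial bound $r_N(\partial B_R(x_0)) \le r_1(\partial B_R(x_0)) \le 2R$ gives $N\, r_N(\partial B_R(x_0))^{d-1} \le C_4 \rho_0^{-(d-1)} R^{\alpha} (2R)^{d-1} \le C R^{\alpha+d-1}$. For larger $N$ the displayed estimate gives $r_N(\partial B_R(x_0)) \le (C_4 R^{\alpha}/N)^{1/(d-1)} \le \rho_0$, so $N\, r_N(\partial B_R(x_0))^{d-1} \le C_4 R^{\alpha} \le C R^{\alpha+d-1}$. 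In either regime $N\, r_N(\partial B_R(x_0))^{d-1} \le C R^{\alpha+d-1}$, which, by the equivalence recorded in the remark following the definition of covering growth, is precisely $O(R^{\alpha/(d-1)+1})$ covering growth around $x_0$.

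The main obstacle is the scale-uniform $(d-1)$-dimensional covering estimate for metric spheres used in the second step. The group-theoretic input is soft --- it merely counts the fundamental domains that meet the sphere --- whereas bounding the local $(d-1)$-content of an \emph{arbitrary} metric sphere inside a fixed-size ball, uniformly in its centre $q$ and in particular with no dependence on how far $q$ is (which is exactly what lets the argument escape the possibly exponential growth of $\A_{x_0}$), rests on the uniformly bounded geometry furnished by cocompactness, and is where the Riemannian --- as opposed to purely metric or group-theoretic --- structure is genuinely used.
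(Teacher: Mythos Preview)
Your overall architecture is correct and matches the paper's: both arguments use the Milnor--\v{S}varc lemma together with polynomial growth to cover $\partial B_R(x_0)$ by $O(R^\alpha)$ translates of a fixed compact set, both split into a small-scale regime (where a genuine $(d-1)$-dimensional estimate is needed) and a large-scale/small-$N$ regime (where the trivial bound $r_N \le R$ suffices and produces the extra $R^{d-1}$ factor), and both conclude $N r_N^{d-1} \le C R^{\alpha+d-1}$.

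The genuine difference is in how the small-scale step is handled. The paper does \emph{not} attempt to bound the $(d-1)$-content of $\partial B_R(x_0)$ inside each translate. Instead it runs the global volumetric packing bound already developed earlier: cocompactness gives a uniform Ricci lower bound, so Bishop--Gromov controls the annulus volume $\vol_M(B_{R+r}\setminus B_{R-r})$ by $C\,r\,\vol_M(B_{R+r_0})$, while a uniform lower density bound $\vol_M(B_r(x)) \ge \vartheta r^d$ (again from cocompactness) turns this directly into $N(\partial B_R(x_0);r)\,r^{d-1} \le C\,\vol_M(B_{R+r_0}) \le C R^\alpha$ for all $r \le r_0$. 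No local hypersurface analysis is needed; the argument is purely volumetric and reuses machinery already in the paper. Your route instead localises: you want a uniform bound on the $(d-1)$-dimensional covering numbers of $\partial B_\rho(q)\cap B_{\rho_0}(x)$ via semiconcavity of $d(\cdot,q)$. This is plausible under the bounded sectional curvature that cocompactness provides, but it is not as soft as you suggest: a bound on perimeter (or $\cH^{d-1}$) does not by itself yield covering-number bounds at \emph{all} scales $r \le \rho_0$; you still need control on $\vol((\partial B_\rho(q))^r)/r$ uniformly in $r$, which brings you back to exactly the annulus-volume estimate the paper uses. So your second step, which you rightly flag as the main obstacle, either needs substantially more work (uniform tube/Minkowski-content bounds for level sets of semiconcave functions) or collapses to the paper's Bishop--Gromov argument anyway.
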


These results rely only on metric arguments and the \emph{Bishop-Gromov volume comparison theorem} for complete Riemannian manifolds, and could thus be extended to non-smooth metric measure spaces which satisfy the same volume comparison property. Specifically, the \emph{curvature-dimension spaces} introduced independently by Sturm \cite{sturm, sturmii} and by Lott and Villani \cite{LottVillani} are shown to satisfy the same volume comparison property as Riemannian manifolds (cf. \cite[Thm. 2.3]{sturmii}). 
While Theorem \ref{thm:quantasym} will not extend directly to curvature-dimension spaces, owing to the dependence on the locally Euclidean structure of manifolds, the general upper bound in Theorem \ref{thm:quantub} as well as the estimates given in Section \ref{sect:covest} will carry over thanks to the Bishop-Gromov theorem. Though we will only be focusing on Riemannian manifolds in this paper, we nevertheless present Theorem \ref{thm:quantub} in the general setting of geodesic spaces for the sake of future investigations of non-smooth settings.

\subsection{Organization}
Before proceeding with the proof of Theorems \ref{thm:quantub} and \ref{thm:quantasym}, in Section \ref{sect:prel} we cover preliminary definitions and propositions about the quantization error and covering numbers. We then prove Theorem \ref{thm:quantub} in Section \ref{sect:proofub} and Theorem \ref{thm:quantasym} in Section \ref{sect:proofasym}. We obtain estimates on the covering growth of manifolds in Section \ref{sect:covest}, treating the case of lower bounded Ricci curvature in Section \ref{sect:riclb} and geometric group actions in Section \ref{sect:cocompact}. Appendix \ref{app:mincon} discusses the relation between covering growth and the perimeters of spheres, and Appendix \ref{app:auxlem} contains auxiliary lemmas.

\section{Preliminaries}\label{sect:prel}

We first review a list of preliminary definitions and facts about the quantization error, as well as the covering radii of sets. The material in this section is valid in the setting of general \emph{Polish} metric spaces, i.e., separable metric spaces that admit an equivalent complete metric.

\begin{defin}
Let $X$ be a Polish metric space, $p \in [1,\infty)$. 

We denote by $\M^p_+(X)$ the set of finite positive Borel measures on $X$ with \emph{finite $p$th moments}: $\mu \in \M^p_+(X)$ if $\int_X d(x,x_0)^p \di \mu(x) < \infty$ for some ($\iff$ all) $x_0 \in X$.

Given $\mu \in \M^p_+(X)$, the \emph{quantization error} of a set $S \subseteq X$ of order $p$ with respect to $\mu$ is
\[ V_p(\mu; S) := \int_X d(x, S)^p \di\mu(x). \]
The $N$th \emph{quantization error} of $\mu$ of order $p$ is the infimum over all subsets of cardinality at most $N$:
\[ V_{N,p}(\mu) := \inf_{\# S \leq N} V_p(\mu; S). \]
The $p$th root of the quantization error is also denoted by $e_{N,p}(\mu)$. If $A \subseteq X$, we also denote $V^{(\mu)}_p(A; S) := V_p(\mu|_A; S)$ and likewise for $V^{(\mu)}_{N,p}$ and $e^{(\mu)}_{N,p}$.
\end{defin}

The quantization error of a finite linear combination of measures can be decomposed linearly:

\begin{lemma}\label{lem:quantadd}
Let $\mu_1, \ldots, \mu_k$ be finite Borel measures on a Polish metric space $X$, with finite $p$th moments for $p < \infty$. 

Set $\mu := \sum_{i=1}^k \lambda_i \mu_i$, $\lambda_1, \ldots, \lambda_k > 0$. 
Then for any $N, N_1, \ldots, N_k \in \N$ such that $N \geq \sum_{i=1}^k N_i$,
\[ V_{N,p}(\mu) \leq \sum_{i=1}^k \lambda_i V_{N_i,p}(\mu_i). \]
\end{lemma}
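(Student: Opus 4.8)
The plan is to choose, for each $i$, a near-optimal set $S_i$ with $\#S_i \le N_i$ that almost achieves $V_{N_i,p}(\mu_i)$, and then use $S := \bigcup_{i=1}^k S_i$ as a candidate for the quantization error of $\mu$. Since $\#S \le \sum_{i=1}^k N_i \le N$, it is admissible in the infimum defining $V_{N,p}(\mu)$, so $V_{N,p}(\mu) \le V_p(\mu; S)$. The key pointwise observation is monotonicity of the distance-to-a-set function: if $S_i \subseteq S$ then $d(x, S) \le d(x, S_i)$ for every $x \in X$, hence $d(x,S)^p \le d(x,S_i)^p$ for each $i$.

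First I would fix $\varepsilon > 0$ and pick $S_i$ with $\#S_i \le N_i$ and $V_p(\mu_i; S_i) \le V_{N_i,p}(\mu_i) + \varepsilon$ (possible by definition of the infimum; if some $V_{N_i,p}(\mu_i)$ were the infimum itself I would still just take an $\varepsilon$-minimizer). Then I would estimate, using linearity of the integral in $\mu = \sum_i \lambda_i \mu_i$ and the pointwise bound $d(x,S)^p \le d(x,S_i)^p$ applied with the index $i$ attached to each term,
\[
V_p(\mu; S) = \sum_{i=1}^k \lambda_i \int_X d(x,S)^p \di\mu_i(x) \le \sum_{i=1}^k \lambda_i \int_X d(x,S_i)^p \di\mu_i(x) = \sum_{i=1}^k \lambda_i V_p(\mu_i; S_i) \le \sum_{i=1}^k \lambda_i \bigl( V_{N_i,p}(\mu_i) + \varepsilon \bigr).
\]
Combining with $V_{N,p}(\mu) \le V_p(\mu;S)$ and letting $\varepsilon \to 0$ (the coefficient $\sum_i \lambda_i$ being finite) yields the claim.

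There is essentially no hard part: the only things to be careful about are that all the integrals are finite (guaranteed by the finite $p$th moment hypothesis on each $\mu_i$, so that $V_p(\mu_i; S_i) < \infty$ for any nonempty finite $S_i$, hence $V_p(\mu;S) < \infty$) and that the $S_i$ are nonempty so that $d(\cdot, S_i)$ is well-defined — which we may assume since each $N_i \ge 1$ in $\N$. One could alternatively dispense with $\varepsilon$ and argue directly with infima, but the $\varepsilon$-minimizer formulation keeps the measurability and finiteness bookkeeping transparent. I expect the whole argument to be three or four lines once set up.
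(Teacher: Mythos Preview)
Your proof is correct and follows essentially the same approach as the paper: pick near-optimal sets $S_i$ of size at most $N_i$, take their union $S$, use $d(x,S) \le d(x,S_i)$ together with the linear decomposition of $\mu$, and pass to the infimum. The only cosmetic difference is that the paper takes the infimum over the $S_i$ directly rather than introducing an $\varepsilon$ and letting it tend to zero.
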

\begin{proof}
For each $i = 1, \ldots, k$, let $S_i$ be an arbitrary subset of $X$ with cardinality at most $N_i$. Then $S := \bigcup_{i=1}^k S_i$ has cardinality at most $N$, and
\begin{align*}
 V_{N,p}(\mu) & \leq \int_X d(x, S)^p \di \mu(x) = \sum_{i=1}^k \lambda_i \int_X d(x, S)^p \di \mu_i(x) \leq \sum_{i=1}^k \lambda_i \int_X d(x, S_i)^p \di \mu_i(x).
\end{align*}
Taking the infimum over all such $S_i$ yields the statement.
\end{proof}

For every $\mu \in \M^p_+(X)$, $V_{N,p}(\mu) \to 0$ as $N \to \infty$ (see e.g. \cite[Lem. 6.1]{quantbook}). When $\mu$ is compactly supported, this follows by taking an $\varepsilon$-cover of $\supp\mu$ for $\varepsilon > 0$ arbitrarily small. The general case follows from \emph{Ulam's lemma}, valid for Polish spaces, which we cite for later use:

\begin{lemma}[Ulam's lemma]
Let $X$ be a Polish space, $\nu$ a finite Borel measure on $X$. Then for any $\varepsilon > 0$, there exists $K \subset X$ compact such that $\nu(X \setminus K) < \varepsilon$.
\end{lemma}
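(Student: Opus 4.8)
The plan is to exploit the two defining features of a Polish space — the existence of a complete metric $d$ and of a countable dense sequence $(x_n)_{n\in\N}$ — together with the finiteness of $\nu$, and then to invoke the standard characterization of compactness in metric spaces: a closed, totally bounded subset of a complete metric space is compact. First I would fix a \emph{complete} metric $d$ on $X$ (this is where ``Polish'' is used, rather than just ``complete separable'' in the given metric) and a countable dense sequence $(x_n)$. For each $k \in \N$ the open balls $B_{1/k}(x_n)$, $n \in \N$, cover $X$ by density; since $\nu$ is finite, continuity of $\nu$ along the increasing sequence of finite unions $\bigcup_{n\le m} B_{1/k}(x_n)$ yields an index $m_k$ with
\[ \nu\Bigl(X \setminus \bigcup_{n=1}^{m_k} B_{1/k}(x_n)\Bigr) < \frac{\varepsilon}{2^k}. \]
I then set $F_k := \overline{\bigcup_{n=1}^{m_k} B_{1/k}(x_n)}$, which is closed and contained in $\bigcup_{n=1}^{m_k}\{y : d(y,x_n)\le 1/k\}$, and define $K := \bigcap_{k=1}^\infty F_k$.

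The remaining work is to check three properties of $K$. (i) $K$ is closed, being an intersection of closed sets, hence complete as a closed subspace of the complete space $(X,d)$. (ii) $K$ is totally bounded: given $\delta > 0$, choose $k$ with $2/k < \delta$; then $K \subseteq F_k \subseteq \bigcup_{n=1}^{m_k} B_{2/k}(x_n) \subseteq \bigcup_{n=1}^{m_k} B_{\delta}(x_n)$, a finite $\delta$-cover. By (i) and (ii), $K$ is compact. (iii) The measure estimate: since $X\setminus F_k \subseteq X \setminus \bigcup_{n\le m_k} B_{1/k}(x_n)$,
\[ \nu(X \setminus K) = \nu\Bigl(\bigcup_{k=1}^\infty (X \setminus F_k)\Bigr) \le \sum_{k=1}^\infty \nu(X \setminus F_k) \le \sum_{k=1}^\infty \frac{\varepsilon}{2^k} = \varepsilon. \]
If a strict inequality is wanted, one simply starts from $\varepsilon/2$ in place of $\varepsilon$.

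There is no genuine obstacle here; this is a classical tightness argument and the proof is essentially bookkeeping. The only two points that require a moment's care are that one must pass to a \emph{complete} metric compatible with the topology of $X$ so that ``closed $\Rightarrow$ complete'' is available (the metric furnished by the hypotheses of the surrounding lemmas need only be topologically equivalent to a complete one), and that the step producing $m_k$ uses finiteness of $\nu$ via continuity from below. Everything else — the passage to closures, the total-boundedness count, and the geometric series bound — is routine.
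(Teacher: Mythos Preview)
Your argument is correct and is the standard tightness construction for finite Borel measures on Polish spaces. The paper does not actually prove this lemma; it merely cites Dudley \cite[7.1.4]{dudley}, whose proof is essentially the one you have written.
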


In particular, for $\mu \in \M^p_+(X)$, for any $\varepsilon > 0$ there exists $K \subseteq X$ compact such that $\int_{X \setminus K} d(x, x_0)^p \di \mu(x) < \varepsilon$. For a proof of Ulam's lemma, refer to Dudley \cite[7.1.4]{dudley}.

The rate of convergence of the quantization error to zero can be further quantified by the following definition:

\begin{defin}
Let $X$ be a Polish metric space, $p \in [1,\infty)$, $d > 0$, $\mu \in \M^p_+(X)$.

The \emph{upper} resp. \emph{lower quantization coefficient} of $\mu$ of order $p$ and dimension $d$ is
\[ \overline{Q}_{p,d}(\mu) := \limsup_{N\to\infty} N^{p/d} V_{N,p}(\mu); \quad \underline{Q}_{p,d}(\mu) := \liminf_{N\to\infty} N^{p/d} V_{N,p}(\mu). \]
When the upper and lower limits coincide, the limit is denoted simply by $Q_{p,d}(\mu)$ and called the \emph{quantization coefficient} of $\mu$.
\end{defin}

We will only need the following subadditivity property of upper quantization coefficients, which is also used implicitly in proofs of the Zador theorem (cf. \cite[Lem. 6.5, 6.8]{quantbook}):

\begin{lemma}\label{lem:quantcoeffprops}
Let $\mu_1, \mu_2 \in \M^p_+(X)$, $p < \infty$. Set $\mu := \mu_1 + \mu_2$. Let $d > 0$, and fix $t_1, t_2 \in (0,1)$ such that $t_1 + t_2 = 1$. Then
\begin{align*} 
\overline{Q}_{p,d}(\mu) & \leq t_1^{-p/d} \overline{Q}_{p,d}(\mu_1) + t_2^{-p/d} \overline{Q}_{p,d}(\mu_2).
\end{align*}
\end{lemma}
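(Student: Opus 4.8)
The plan is to reduce the inequality to the additivity statement of Lemma \ref{lem:quantadd} by splitting the point budget $N$ between $\mu_1$ and $\mu_2$ in the proportions $t_1 : t_2$, and then to pass to the limit, using that a proportional split of $N$ behaves well as $N \to \infty$. Throughout I may assume $\overline{Q}_{p,d}(\mu_1)$ and $\overline{Q}_{p,d}(\mu_2)$ are both finite, since otherwise the claimed bound is trivial; note also that $\mu_1,\mu_2,\mu \in \M^p_+(X)$, so all quantization errors below are finite.

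Concretely, for $N \in \N$ I would set $N_1 := \lceil t_1 N \rceil$ and $N_2 := N - N_1$. Since $t_1, t_2 \in (0,1)$, one has $N_1 \geq 1$ always and $N_2 \geq 1$ once $N$ is large enough, while $N_1/N \to t_1$ and $N_2/N \to t_2$, hence $N_1, N_2 \to \infty$, as $N \to \infty$. Applying Lemma \ref{lem:quantadd} with $\lambda_1 = \lambda_2 = 1$ and $N = N_1 + N_2$ gives
\[ V_{N,p}(\mu) \leq V_{N_1,p}(\mu_1) + V_{N_2,p}(\mu_2). \]
Multiplying by $N^{p/d}$ and writing $N^{p/d} = \bigl(N/N_i\bigr)^{p/d}\, N_i^{p/d}$ in each term yields
\[ N^{p/d} V_{N,p}(\mu) \leq \Bigl(\frac{N}{N_1}\Bigr)^{p/d} N_1^{p/d} V_{N_1,p}(\mu_1) + \Bigl(\frac{N}{N_2}\Bigr)^{p/d} N_2^{p/d} V_{N_2,p}(\mu_2). \]
Now I take $\limsup_{N\to\infty}$. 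For the first summand, $(N/N_1)^{p/d} \to t_1^{-p/d} \in (0,\infty)$, while $\limsup_{N\to\infty} N_1^{p/d} V_{N_1,p}(\mu_1) \leq \overline{Q}_{p,d}(\mu_1)$ because $N_1 \to \infty$ (a $\limsup$ along a sub-indexing of $\N$ is at most the full $\limsup$); combining via the elementary fact that $\limsup(a_n b_n) \leq a\,\limsup b_n$ when $a_n \to a \in (0,\infty)$ and $b_n \geq 0$, the $\limsup$ of the first summand is $\leq t_1^{-p/d}\overline{Q}_{p,d}(\mu_1)$, and symmetrically for the second. Since $\limsup$ of a sum is at most the sum of the $\limsup$s, this gives exactly $\overline{Q}_{p,d}(\mu) \leq t_1^{-p/d}\overline{Q}_{p,d}(\mu_1) + t_2^{-p/d}\overline{Q}_{p,d}(\mu_2)$.

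I do not expect a genuine obstacle here; the proof is essentially the standard bookkeeping behind Zador's theorem. The only points requiring a little care are: verifying $N_1, N_2 \geq 1$ and $N_1, N_2 \to \infty$ for large $N$ (so the ratios $N/N_i$ converge to positive constants and the tail $\limsup$s can be compared with $\overline{Q}_{p,d}(\mu_i)$); and the order-of-limits manipulation with $\limsup$. The choice $N_1 = \lceil t_1 N\rceil$ is not essential — any split with $N_1 + N_2 = N$ and $N_i/N \to t_i$ works equally well.
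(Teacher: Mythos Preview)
Your proof is correct and follows essentially the same approach as the paper's: split the budget $N$ proportionally, apply Lemma \ref{lem:quantadd}, factor out $(N/N_i)^{p/d}$, and pass to the $\limsup$. The only cosmetic difference is that the paper takes $N_i = \lfloor t_i N \rfloor$ (so $N_1 + N_2 \leq N$) rather than your $N_1 = \lceil t_1 N\rceil$, $N_2 = N - N_1$, which, as you note, is immaterial.
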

\begin{proof}
For each $N \geq \max\{t_1^{-1}, t_2^{-1}\}$, set $N_1 = N_1(N) := \lfloor t_1 N \rfloor \geq 1$ and $N_2 = N_2(N) := \lfloor t_2 N \rfloor \geq 1$. Then by Lemma \ref{lem:quantadd},
\[ N^{p/d} V_{N,p}(\mu) \leq N^{p/d} V_{N_1,p}(\mu_1) + N^{p/d} V_{N_2,p}(\mu_2) = \left( \frac{N_1}{N} \right)^{-p/d} N_1^{p/d} V_{N_1,p}(\mu_1) + \left( \frac{N_2}{N} \right)^{-p/d} N_2^{p/d} V_{N_2,p}(\mu_2). \]
Taking the limit supremum of both sides, noting that the limit supremum is subadditive and that $\lim_{N\to\infty} \frac{N_i}{N} = t_i$, yields the inequality.
\end{proof}

In particular, picking the proportions $t_i$ optimally, we have the following inequality:

\begin{lemma}\label{lem:quantcoeffadd}
Let $\mu_1$, $\mu_2$ be finite Borel measures on a complete metric space $X$, with finite $p$th moments for $p < \infty$. Set $\mu := \mu_1 + \mu_2$. Let $s > 0$, and set $r := \frac{d}{p+d}$; i.e., $\frac{1}{r p} = \frac{1}{p} + \frac{1}{d}$. Then
\begin{align*} 
\overline{Q}_{p,d}(\mu)^r & \leq \overline{Q}_{p,d}(\mu_1)^r + \overline{Q}_{p,d}(\mu_2)^r.
\end{align*}
\end{lemma}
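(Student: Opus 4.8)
The plan is to derive Lemma~\ref{lem:quantcoeffadd} from Lemma~\ref{lem:quantcoeffprops} by optimizing the inequality over the admissible splits $t_1+t_2=1$. Write $Q:=\overline{Q}_{p,d}(\mu)$, $Q_1:=\overline{Q}_{p,d}(\mu_1)$, $Q_2:=\overline{Q}_{p,d}(\mu_2)$, and record the two algebraic identities that make the computation collapse: since $r=\frac{d}{p+d}$ one has $\frac pd+1=\frac1r$ and $r\cdot\frac pd=1-r$. Minimizing the right-hand side of Lemma~\ref{lem:quantcoeffprops}, i.e. the function $t\mapsto t^{-p/d}Q_1+(1-t)^{-p/d}Q_2$ on $(0,1)$, by setting its derivative to zero gives $t_1/t_2=(Q_1/Q_2)^{1/(p/d+1)}=(Q_1/Q_2)^{r}$, which suggests the choice
\[ t_1:=\frac{Q_1^{r}}{Q_1^{r}+Q_2^{r}},\qquad t_2:=\frac{Q_2^{r}}{Q_1^{r}+Q_2^{r}}. \]

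Before using this I would first clear the degenerate cases. If $Q_1=\infty$ or $Q_2=\infty$ the asserted inequality is trivial. If $Q_1=Q_2=0$, then Lemma~\ref{lem:quantcoeffprops} with $t_1=t_2=\tfrac12$ already gives $Q=0$. If exactly one vanishes, say $Q_1=0<Q_2<\infty$, I would let $t_1\downarrow0$ (so $t_2\uparrow1$) in Lemma~\ref{lem:quantcoeffprops}: the first term is identically $0$ and the second tends to $Q_2$, giving $Q\le Q_2=(Q_1^{r}+Q_2^{r})^{1/r}$; the case $Q_2=0$ is symmetric. This boundary behaviour is the only point that needs a little care, since the admissible $t_i$ in Lemma~\ref{lem:quantcoeffprops} lie strictly in $(0,1)$.

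In the main case $0<Q_1,Q_2<\infty$ the above $t_1,t_2$ lie in $(0,1)$ and sum to $1$, so Lemma~\ref{lem:quantcoeffprops} applies. Using $1-r\cdot\frac pd=r$, each term simplifies as
\[ t_i^{-p/d}Q_i=Q_i^{\,1-rp/d}\,(Q_1^{r}+Q_2^{r})^{p/d}=Q_i^{\,r}\,(Q_1^{r}+Q_2^{r})^{p/d}, \]
whence
\[ Q\le (Q_1^{r}+Q_2^{r})\,(Q_1^{r}+Q_2^{r})^{p/d}=(Q_1^{r}+Q_2^{r})^{p/d+1}=(Q_1^{r}+Q_2^{r})^{1/r}. \]
Raising both sides to the power $r$ yields $\overline{Q}_{p,d}(\mu)^{r}\le \overline{Q}_{p,d}(\mu_1)^{r}+\overline{Q}_{p,d}(\mu_2)^{r}$, as claimed. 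There is no genuine obstacle: the argument is a one-variable optimization plus the two exponent identities, and the only subtlety is the limiting argument for a vanishing $Q_i$ noted above.
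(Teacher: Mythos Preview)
Your proposal is correct and follows essentially the same route as the paper: both handle the vanishing case by a limiting argument in $t_i$, and in the nondegenerate case both plug the optimal split $t_i = Q_i^{r}/(Q_1^{r}+Q_2^{r})$ into Lemma~\ref{lem:quantcoeffprops} and use the identity $1-\frac{rp}{d}=r$ to collapse the sum to $(Q_1^{r}+Q_2^{r})^{1/r}$. Your treatment is slightly more thorough in that you also dispose of the $Q_i=\infty$ case explicitly and motivate the choice of $t_i$ by the one-variable minimization, but the argument is the same.
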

\begin{proof}
If either summand is zero, say $\overline{Q}_{p,d}(\mu_2) = 0$, then by Lemma \ref{lem:quantcoeffprops}, for arbitrary $t_1 \in (0,1)$ we have that
\[ \overline{Q}_{p,d}(\mu) \leq t_1^{-p/d} \overline{Q}_{p,d}(\mu_1) \] 
Letting $t_1 \to 1^-$ yields $\overline{Q}_{p,d}(\mu) \leq \overline{Q}_{p,d}(\mu_1)$ and the desired inequality holds.

We can then assume both summands are nonzero. Set
\[ t_i := Z^{-1} \overline{Q}_{p,d}(\mu_i)^r; \quad Z = \overline{Q}_{p,d}(\mu_1)^r+\overline{Q}_{p,d}(\mu_2)^r. \]
Then $t_1, t_2 \in (0,1)$ with $t_1 + t_2 = 1$, so by Lemma \ref{lem:quantcoeffprops},
\begin{align*} 
\overline{Q}_{p,d}(\mu) & \leq Z^{p/d} \overline{Q}_{p,d}(\mu_1)^{1-rp/d} + Z^{p/d} \overline{Q}_{p,d}(\mu_2)^{1-rp/d} 
\\ & \leq Z^{p/d} \left[ \overline{Q}_{p,d}(\mu_1)^r +  \overline{Q}_{p,d}(\mu_2)^r \right]
\\ & = Z^{1+p/d} = Z^{1/r}
\end{align*}
since $1-\frac{rp}{d} = 1 - \frac{p}{d+p} = \frac{d}{d+p} = r$. This shows that 
\[ \overline{Q}_{p,d}(\mu)^r \leq Z = \overline{Q}_{p,d}(\mu_1)^r+\overline{Q}_{p,d}(\mu_2)^r.  \qedhere \]
\end{proof}

This bound is an application of the equality case of the generalized H\"older inequality. Analogous statements can be formulated for the quantization problem of order $\infty$, but we omit its treatment here.

\subsection{Covering numbers and radii}

In preparation for the upper bound on the quantization error, we also review the covering numbers and radii of sets in general metric spaces. 

\begin{defin}
Let $A$ be a nonempty (pre)compact subset of a complete metric space $X$. 
\begin{itemize}
\item The \emph{covering number} of $A$ of radius $r$ is the smallest number of open $r$-balls that cover $A$:
\[ N(A; r) := \min\left\{ N \in \N \mid \exists \{x_i\}_{i=1}^N \subseteq X, A \subseteq \bigcup_{i=1}^N B_r(x_i) \right\}. \]
\item The \emph{packing number} of $A$ of radius $r$ is the greatest number of disjoint open $r$-balls with centers in $A$:
\[ P(A; r) := \max\left\{ N \in \N \mid \exists \{x_i\}_{i=1}^N \subseteq A, B_r(x_i) \cap B_r(x_j) = \varnothing \ \forall i \neq j \right\}. \]
\item The $N$th \emph{covering radius} of $A$ is the minimal radius of a cover of $A$ by $N$ open balls:
\begin{align*} 
r_N(A) := & \inf\left\{ r > 0 \mid \exists \{x_i\}_{i=1}^N \subseteq X, A \subseteq \bigcup_{i=1}^N B_r(x_i) \right\} 
\\ = & \inf\{ r > 0 \mid N(A; r) \leq N \}. 
\end{align*}
\end{itemize}
\end{defin}

The covering radius can also be thought of as the $N$th quantization error of $A$ of order $p = \infty$; see Graf and Luschgy \cite[Sect. 10]{quantbook}. 

\begin{remark}
For $A$ compact, we have the inequalities $r_{N(A; r)}(A) \leq r$ and $N(A; r_N(A)) > N$ for all $r > 0$ and $N \in \N$. The former inequality follows by definition. For the latter, note that there exists no $r_N(A)$-cover of $A$ with $N$ elements: given any $r$-cover $S := \{x_i\}_{i=1}^N$ of $A$ with $N$ elements, the compactness of $A$ implies that $\sup_{x \in A} d(x, S) = \max_{x \in A} d(x, S) < r$ since $A$ is contained in the open $r$-neighborhood of $S$.

If $T \colon X \to Y$ is a $L$-Lipschitz map between metric spaces with $A \subseteq X$, then $N(T(A); L r) \leq N(A; r)$ and $r_N(T(A)) \leq L r_N(A)$, since for any $r$-cover $\{x_i\}_{i=1}^N$ of $A$, $\{T(x_i)\}_{i=1}^N$ is an $L r$-cover of $T(A)$ with at most $N$ elements. Moreover, equality holds if $T \colon X \to Y$ is a similarity transformation; i.e., $d_Y(T(x), T(x^\prime)) = L d_X(x, x^\prime)$.
\end{remark}

The following basic inequalities show that packing and covering numbers grow at the same rate:

\begin{lemma}
Let $A$ be a nonempty precompact subset of a complete metric space $X$. Then for any $r > 0$,
\[ N(A; 2r) \leq P(A; r) \leq N(A; r). \] 
\end{lemma}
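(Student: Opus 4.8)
The statement to prove is the standard comparison $N(A; 2r) \leq P(A; r) \leq N(A; r)$ between covering and packing numbers. The plan is to prove the two inequalities separately, in each case by producing an explicit cover or packing from the data realizing the other quantity.

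For the right inequality $P(A; r) \leq N(A; r)$, I would take a maximal packing: let $\{x_i\}_{i=1}^P \subseteq A$ be centers of $P = P(A;r)$ pairwise-disjoint open $r$-balls, and let $\{y_j\}_{j=1}^N \subseteq X$ with $N = N(A;r)$ be centers of an $r$-ball cover of $A$. The key observation is that each $x_i$ lies in some $B_r(y_{j(i)})$, and the map $i \mapsto j(i)$ must be injective: if $x_i, x_{i'}$ both lay in the same $B_r(y_j)$ with $i \neq i'$, then $d(x_i, x_{i'}) < 2r$, so $B_r(x_i) \cap B_r(x_{i'}) \ni$ (the midpoint, or rather) a point — more carefully, in a general metric space one argues $d(x_i, x_{i'}) < 2r$ forces the open balls $B_r(x_i)$ and $B_r(x_{i'})$ to intersect is \emph{false} in general (there may be no midpoint). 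So instead I would use the cleaner argument: since $\{x_i\}$ is a \emph{maximal} packing, adding any further point of $A$ would destroy disjointness. Actually the robust route is: for the maximal packing, the balls $\{B_{2r}(x_i)\}$ cover $A$ — otherwise some $a \in A$ has $d(a, x_i) \geq 2r$ for all $i$, so $B_r(a)$ is disjoint from every $B_r(x_i)$, contradicting maximality. That gives the \emph{other} inequality. Let me instead prove $P(A;r) \leq N(A;r)$ directly: given an optimal $r$-cover $\{B_r(y_j)\}_{j=1}^N$ of $A$ and \emph{any} packing $\{B_r(x_i)\}_{i=1}^P$ with $x_i \in A$, each $x_i$ lies in some $B_r(y_{j(i)})$; if $j(i) = j(i')$ then $x_i, x_{i'} \in B_r(y_j)$, hence $d(x_i, y_j) < r$ and $d(x_{i'}, y_j) < r$, so $y_j \in B_r(x_i) \cap B_r(x_{i'})$, forcing $i = i'$ by disjointness. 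Thus $j$ is injective and $P \leq N$.

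For the left inequality $N(A; 2r) \leq P(A; r)$, I would take a maximal packing $\{B_r(x_i)\}_{i=1}^P$, $x_i \in A$, $P = P(A;r)$, and claim $A \subseteq \bigcup_{i=1}^P B_{2r}(x_i)$. Indeed, if some $a \in A$ satisfied $d(a, x_i) \geq 2r$ for all $i$, then $B_r(a) \cap B_r(x_i) = \varnothing$ for all $i$ (any common point $z$ would give $d(a, x_i) \leq d(a,z) + d(z, x_i) < 2r$), so $\{B_r(x_1), \ldots, B_r(x_P), B_r(a)\}$ would be a larger packing, contradicting maximality of $P$. Hence the $P$ balls of radius $2r$ cover $A$, giving $N(A; 2r) \leq P \leq P(A;r)$ — and here one should note a maximal packing exists and is finite precisely because $A$ is precompact (a totally bounded set cannot contain arbitrarily many disjoint $r$-balls centered in it).

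I expect this to be entirely routine; the only mild subtlety, and the one point worth stating carefully, is that in a general metric space $d(x,x') < 2r$ does \emph{not} by itself imply $B_r(x) \cap B_r(x') \neq \varnothing$, so the argument for the left inequality must go through the contrapositive (a point $a$ far from every $x_i$ yields a ball \emph{disjoint} from all the packing balls, which is the direction that does work) rather than trying to directly intersect nearby balls. Correspondingly, for the right inequality the injectivity argument is phrased so that the cover center $y_j$ itself serves as the witness point in $B_r(x_i) \cap B_r(x_{i'})$, which avoids the same issue. No display environments are needed, so there is nothing delicate about the formatting.
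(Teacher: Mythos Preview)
Your proposal is correct and, once you settle on the final arguments after the false starts, it is essentially identical to the paper's proof: for $P(A;r)\le N(A;r)$ you (and the paper) use that two packing centers landing in the same cover ball $B_r(y_j)$ forces $y_j\in B_r(x_i)\cap B_r(x_{i'})$, and for $N(A;2r)\le P(A;r)$ you (and the paper) use that a maximal $r$-packing yields a $2r$-cover via the triangle inequality. Your explicit remark that $d(x,x')<2r$ need not imply $B_r(x)\cap B_r(x')\neq\varnothing$ in a general metric space is a good observation and is precisely why both your argument and the paper's phrase the left inequality through maximality rather than through midpoints.
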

\begin{proof}
Note firstly that by the total boundedness of $A$, $N(A; r) < \infty$ for any $r > 0$. 

We first prove the second inequality. Let $n := N(A; r)$ and let $\{x_i\}_{i=1}^n \subseteq X$ be an $r$-cover of $A$. 
Let $\{y_j\}_{j=1}^m \subseteq A$ be an arbitrary finite subset of $A$ with $m > n$. Since $\{x_i\}_{i=1}^n$ is an $r$-cover, for each $j \in \{1, \ldots, m\}$ there exists $i \in \{1, \ldots, n\}$ such that $y_j \in B_r(x_i)$. 

Then by the assumption that $m > n$, the pigeonhole principle implies the existence of $j \neq j^\prime$ such that $y_j, y_{j^\prime} \in B_r(x_i)$. But in that case, $x_i \in B_r(y_j) \cap B_r(y_{j^\prime})$, so $\{y_j\}_{j=1}^m$ cannot be an $r$-packing on $A$. Thus $P(A; r) \leq n$, in particular $P(A; r) < \infty$. This shows the second inequality.

Now setting $m := P(A; r) \leq n < \infty$, by the well-ordering principle there exists an $r$-packing $\{y_j\}_{j=1}^m \subseteq A$. Let $x \in A \setminus \{y_j\}_{j=1}^m$. Then by maximality, the set $\{y_j\}_{j=1}^m \cup \{x\}$ cannot also be an $r$-packing, so there exists $j \in \{1, \ldots, m\}$ such that $B_r(y_j) \cap B_r(x) \neq \varnothing$. In that case, we have $d(x, y_j) < 2r$ by the triangle inequality. This shows that $\{y_j\}_{j=1}^m$ is a $2r$-cover of $A$, yielding the bound $P(A; r) = m \geq N(A; 2r)$. This shows the first inequality.
\end{proof}

We will rely on the former inequality in order to bound covering numbers from above.
For reference, see e.g. Mattila \cite[{\S 5.3}]{Mattila:1995ub}; note however the additional factor of $2$ arising from the different convention to define coverings and packings in terms of closed instead of open balls.

\begin{figure}[H]
\centering
\includegraphics[scale=0.9]{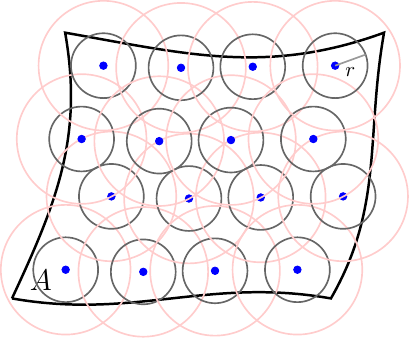}
\caption{A maximal $r$-packing on a region $A$ with cardinality $16$ yielding a $2r$-cover of $A$.}
\end{figure}

We also cite the following volumetric upper bound on covering/packing numbers:
 
\begin{lemma}\label{lem:supregbd}
Let $X$ be a metric space, $\nu$ a Borel measure on $X$. 

Let $A$ be a nonempty compact subset of $X$, and suppose there exist $\vartheta, d, r > 0$ such that
\[ \nu(B_r(x)) \geq \vartheta r^d, \quad \text{for all } x \in A. \]
Then
\[ N(A; 2r) \leq P(A; r) \leq \vartheta^{-1} \nu(A^{r}) r^{-d}, \]
where $A^r = \{ x \in X \mid d(x,A) < r \}$ is the open $r$-neighborhood of $A$.
\end{lemma}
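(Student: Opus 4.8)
The plan is to reduce the statement to its second inequality $P(A;r) \leq \vartheta^{-1}\nu(A^r)r^{-d}$, since the first inequality $N(A;2r) \leq P(A;r)$ is exactly what the preceding lemma provides. To prove the second inequality I would begin with a genuinely maximal $r$-packing of $A$. By the preceding lemma, $P(A;r) \leq N(A;r) < \infty$ (a compact set is totally bounded), so the supremum defining $P(A;r)$ is attained, and the well-ordering principle furnishes an $r$-packing $\{x_i\}_{i=1}^m \subseteq A$ with $m = P(A;r)$ and the balls $B_r(x_i)$ pairwise disjoint.

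Next I would record two elementary observations. First, each $B_r(x_i)$ is contained in the open $r$-neighborhood $A^r$, because $x_i \in A$ forces every point of $B_r(x_i)$ to lie within distance $r$ of $A$. Second, $A^r$ is open, being the sublevel set $\{x : d(x,A) < r\}$ of the continuous function $d(\cdot, A)$, hence Borel, so $\nu(A^r)$ is well-defined. Combining the disjointness with the hypothesis $\nu(B_r(x_i)) \geq \vartheta r^d$ and additivity of $\nu$ then gives
\[ \nu(A^r) \;\geq\; \nu\Bigl(\bigcup_{i=1}^m B_r(x_i)\Bigr) \;=\; \sum_{i=1}^m \nu(B_r(x_i)) \;\geq\; m\,\vartheta\, r^d \;=\; P(A;r)\,\vartheta\, r^d, \]
and rearranging yields $P(A;r) \leq \vartheta^{-1}\nu(A^r)r^{-d}$. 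Chaining with $N(A;2r) \leq P(A;r)$ finishes the proof.

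There is essentially no serious obstacle: the only delicate points are the routine measurability check for $A^r$ and ensuring a truly maximal packing exists, so that the middle equality above is an honest identity rather than a supremum — both handled by the finiteness of $P(A;r)$ coming from total boundedness of $A$. Should one prefer to avoid invoking the preceding lemma for finiteness, one can argue directly instead: any $r$-packing of cardinality $m$ yields $\nu(A^r) \geq m\vartheta r^d$, so either $\nu(A^r) = \infty$ and the asserted bound is vacuous, or every such $m$ is bounded by $\vartheta^{-1}\nu(A^r)r^{-d} < \infty$, whence $P(A;r)$ is finite and bounded by the same quantity.
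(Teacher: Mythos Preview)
Your proof is correct and follows essentially the same approach as the paper: invoke the preceding lemma for $N(A;2r)\le P(A;r)$ and finiteness of $P(A;r)$, take a maximal $r$-packing, observe the disjoint balls lie in $A^r$, sum the volume lower bounds, and rearrange. The extra remarks on measurability of $A^r$ and the alternative finiteness argument are fine additions but not needed.
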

\begin{proof}
Since $A$ is compact, $P(A; r) \leq N(A; r) < \infty$.
Take a maximal $r$-packing $\{x_i\}_{i=1}^N$ on $A$, with $N = P(A; r) \geq N(A; 2 r)$. Then the disjoint balls $(B_r(x_i))_{i=1}^N$ are contained in $A^r$, hence
\[ \nu(A^r) \geq \sum_{i=1}^N \nu(B_r(x_i)) \geq N \vartheta r^d. \]
Reordering yields the desired inequality.
\end{proof}


In Section \ref{sect:volest}, we will apply this basic volumetric bound in order to control the covering numbers of spheres in Riemannian manifolds.

\section{Proof of main results}

We now prove the Pierce-type upper bound on the quantization error of measures on complete Riemannian manifolds, and directly deduce the generalization of Zador's theorem for non-compactly supported measures.

\subsection{Proof of the upper bound}\label{sect:proofub}

\begin{proof}[Proof of Theorem \ref{thm:quantub}]
We obtain the upper bound by reduction to Pierce's lemma on $\R$. Take the distance function $d_{x_0} = d(\cdot, x_0) \colon X \to [0,\infty)$ and the pushforward measure $\mu_1 := (d_{x_0})_\# \mu$. We first quantize $\mu_1$ with $k$ radii $0 < R_1 < \ldots < R_k < \infty$, and then cover each sphere $\partial B_{R_i}(x_0)$ with $k^{d-1}$ points in order to quantize $\mu$ with $N := k^d + 1$ points.
 
\begin{figure}[H]
\centering
\includegraphics[scale=0.9]{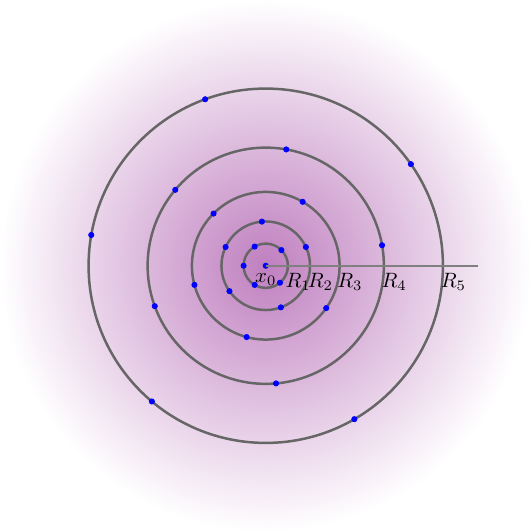}
\caption{Quantization of a non-uniform distribution on $\R^2$ with $N = 26$, obtained by covering 5 concentric circles with 5 points each.}
\end{figure}
 
\begin{notation*}
We will denote the ball $B_R(x_0)$ simply by $B(R)$, the sphere $\partial B_R(x_0)$ by $S(R)$, and the annulus $B_{R_2}(x_0) \setminus B_{R_1}(x_0)$ by $A[R_1, R_2)$ (in particular, $A[0,R) = B(R)$ and $A[R,\infty) = B(R)^c$).
\end{notation*}

Let $k \geq 1$. Choose $0 = R_0 < R_1 < \ldots < R_k < R_{k+1} = \infty$ arbitrary, so that $X$ can be partitioned as
\[ X = \bigsqcup_{i=0}^k X_i; \quad X_i := A[R_i, R_{i+1}) = \{x \in X \mid R_i \leq d(x,x_0) < R_{i+1} \} = d_{x_0}^{-1}([R_i,R_{i+1})). \]
We can quantize $\mu$ with $N := k^d+1$ points by sending $X_0$ to a single point (which we will take to be $x_0$), and quantizing each $X_i$ by $k^{d-1}$ points (which will be a cover of $S(R_i)$). By Lemma \ref{lem:quantadd}, we have
\[ V_{k^d+1,p}(\mu) \leq V_{1,p}^{(\mu)}(X_0) + \sum_{i=1}^k V_{k^{d-1},p}^{(\mu)}(X_i). \]

We first cite the following simple lemma:

\begin{lemma}\label{lem:distbd}
Let $x \in X \setminus \{x_0\}$ and $0 \leq R \leq d(x,x_0)$. 
Then $d(x, S(R)) = d(x,x_0)-R$.
\end{lemma}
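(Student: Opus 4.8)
The plan is to establish the two inequalities $d(x, S(R)) \geq d(x,x_0) - R$ and $d(x, S(R)) \leq d(x,x_0) - R$ separately, only the second of which uses the geodesic structure of $X$. For the lower bound I would use nothing but the triangle inequality: since the topological boundary of an open ball always satisfies $S(R) = \partial B_R(x_0) \subseteq \{ y \in X \mid d(y, x_0) = R \}$, every $y \in S(R)$ obeys $d(x,x_0) \leq d(x,y) + d(y, x_0) = d(x,y) + R$, i.e. $d(x,y) \geq d(x,x_0) - R$, and taking the infimum over $y \in S(R)$ gives the bound. (When $R = 0$ one interprets $S(0) = \{x_0\}$, for which the identity $d(x, S(0)) = d(x, x_0)$ is trivial; from now on I assume $R > 0$, which will also guarantee $S(R) \neq \varnothing$.)

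For the upper bound I would exploit that $X$ is a complete geodesic space. Since $x \neq x_0$, there is a unit-speed minimizing geodesic $\gamma \colon [0, \ell] \to X$ with $\gamma(0) = x_0$ and $\gamma(\ell) = x$, where $\ell := d(x, x_0) \geq R > 0$; set $y := \gamma(R)$. Splitting the identity $d(x_0, x) = \ell$ through $y$ with the triangle inequality and using $d(x_0, y) \leq R$, $d(y, x) \leq \ell - R$ (which hold because $\gamma$ has unit speed) forces both estimates to be equalities, so $d(x_0, y) = R$ and $d(y, x) = \ell - R = d(x, x_0) - R$. It then remains to check that $y$ actually lies on $S(R)$: the points $\gamma(R - 1/n) \in B(R)$ converge to $y$, whereas $d(x_0, y) = R$ shows $y \notin B(R)$, so $y \in \partial B_R(x_0) = S(R)$, whence $d(x, S(R)) \leq d(x, y) = d(x, x_0) - R$. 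Together with the lower bound this proves the lemma.

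I expect the argument to be essentially routine; the one step needing a little care is the last one, namely that the geodesic point $\gamma(R)$ lands on the \emph{topological} boundary $\partial B_R(x_0)$ and not merely on the metric sphere $\{ d(\cdot, x_0) = R \}$. This is where completeness and the approximating sequence $\gamma(R - 1/n) \in B(R)$ enter, and it --- together with bookkeeping of the degenerate case $R = 0$ --- is essentially the only place where one cannot simply invoke the triangle inequality.
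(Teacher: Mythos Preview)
Your proof is correct and follows the same two-inequality structure as the paper: triangle inequality for the lower bound, and a point on a minimizing geodesic from $x_0$ to $x$ for the upper bound. The only difference is that you take extra care to verify that $\gamma(R)$ lies on the \emph{topological} boundary $\partial B_R(x_0)$ rather than merely on the metric sphere $\{d(\cdot,x_0)=R\}$; in a geodesic space these coincide, so the paper simply asserts the intersection without this bookkeeping (and your remark that ``completeness'' is what drives the approximating sequence is slightly off --- it is just the continuity of $\gamma$).
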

\begin{proof}
For any $y \in S(R)$, we have 
\[ d(x,x_0) \leq d(x,y) + d(y,x_0) = d(x,y) + R, \]
thus also $d(x, S(R)) \geq d(x,x_0)-R$.
Conversely, taking a constant-speed length-minimizing geodesic from $x_0$ to $x$, the geodesic intersects $S(R)$ at a unique point $z$, which satisfies
\[ d(x,x_0) = d(x,z) + d(z,x_0) = d(x,z) + R, \]
hence $d(x,S(R)) = d(x,z) = d(x,x_0)-R$.
\end{proof}

This is the only place where we use the assumption of $X$ being a geodesic space. 
With this expression, we can bound the quantization cost of each annulus:

\begin{lemma}\label{lem:quantann}
Let $0 \leq R_1 < R_2 \leq \infty$. Then for each $m \in \N$,
\[ V_{m,p}^{(\mu)}(A[R_1, R_2)) \leq \int_{R_1}^{R_2} \left[ (R-R_1) + r_{m}(S(R_1)) \right]^p \di \mu_1(R), \]
where by convention $\int_{R_1}^{R_2} = \int_{[R_1,R_2)}$.
\end{lemma}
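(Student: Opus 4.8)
The plan is to cover the annulus $A[R_1,R_2)$ by taking an almost-optimal $m$-point cover of the inner sphere $S(R_1)$ and quantizing each point $x\in A[R_1,R_2)$ to its nearest point in that cover. First I would fix $\varepsilon>0$ and choose a set $T=\{y_1,\dots,y_m\}\subseteq X$ with $A[R_1,R_2)$'s inner boundary $S(R_1)\subseteq\bigcup_{j=1}^m B_{r}(y_j)$ for some $r<r_m(S(R_1))+\varepsilon$; such a $T$ exists by the definition of the covering radius $r_m$ (taking $m=1$, $T=\{x_0\}$ in the degenerate case $R_1=0$, where $S(0)=\{x_0\}$ and $r_1(S(0))=0$). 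Then for any $x\in A[R_1,R_2)$ I would bound $d(x,T)\le d(x,S(R_1))+\operatorname{diam}$-type estimate: pick $z\in S(R_1)$ realizing (up to $\varepsilon$) the distance $d(x,S(R_1))$, then pick $y_j$ with $d(z,y_j)<r$, so that
\[ d(x,T)\le d(x,y_j)\le d(x,z)+d(z,y_j)< d(x,S(R_1))+\varepsilon+r. \]
Now $d(x,S(R_1))=d(x,x_0)-R_1$ by Lemma \ref{lem:distbd} (valid since $R_1\le R\le d(x,x_0)$ for $x\in A[R_1,R_2)$, and this is where the geodesic hypothesis enters), so $d(x,T)<(d(x,x_0)-R_1)+r_m(S(R_1))+2\varepsilon$.

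Next I would integrate the $p$-th power of this bound over $A[R_1,R_2)$ against $\mu$. Since $T$ has at most $m$ elements,
\[ V_{m,p}^{(\mu)}(A[R_1,R_2))\le \int_{A[R_1,R_2)} d(x,T)^p\di\mu(x)\le \int_{A[R_1,R_2)}\big[(d(x,x_0)-R_1)+r_m(S(R_1))+2\varepsilon\big]^p\di\mu(x). \]
Pushing forward under $d_{x_0}$ and using that $\mu_1=(d_{x_0})_\#\mu$ while $A[R_1,R_2)=d_{x_0}^{-1}([R_1,R_2))$, the right-hand side equals $\int_{[R_1,R_2)}\big[(R-R_1)+r_m(S(R_1))+2\varepsilon\big]^p\di\mu_1(R)$. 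Finally I would let $\varepsilon\to0$: by monotone (or dominated) convergence — the integrand is monotone in $\varepsilon$ and the $\varepsilon=1$ version is integrable because $\mu$ has a finite $p$-th moment — the bound passes to the limit, yielding
\[ V_{m,p}^{(\mu)}(A[R_1,R_2))\le \int_{R_1}^{R_2}\big[(R-R_1)+r_m(S(R_1))\big]^p\di\mu_1(R). \]

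The only genuinely delicate points are bookkeeping ones rather than conceptual obstacles: handling the boundary case $R_1=0$ (where $S(0)=\{x_0\}$, $r_m(S(0))=0$, and the bound becomes $\int_0^{R_2}R^p\di\mu_1$, consistent with sending $B(R_2)$ to $x_0$), and handling $R_2=\infty$ together with the $\varepsilon\to0$ limit, where one must make sure the dominating function (a constant times $1+d(x,x_0)^p$) is $\mu$-integrable — which it is, by hypothesis. One should also note that $S(R_1)$ is precompact (it is closed and bounded in a complete geodesic — hence proper, by Hopf–Rinow-type reasoning — space, or at worst one works with its closure), so that $r_m(S(R_1))$ is well-defined and finite; if precompactness is not assumed a priori one can instead cover the closed ball $\overline{B(R_1)}$, but in the intended applications (Riemannian manifolds, and more generally proper geodesic spaces) this is automatic. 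No step here is hard; the lemma is essentially a direct consequence of Lemma \ref{lem:distbd} and the definition of $r_m$.
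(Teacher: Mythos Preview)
Your proposal is correct and follows essentially the same approach as the paper: choose an $m$-point cover of $S(R_1)$ of radius $\delta>r_m(S(R_1))$, bound $d(x,T)\le d(x,S(R_1))+\delta$ via the triangle inequality, invoke Lemma~\ref{lem:distbd} for $d(x,S(R_1))=d(x,x_0)-R_1$, push forward under $d_{x_0}$, and let $\delta\downarrow r_m(S(R_1))$. The only cosmetic difference is that the paper avoids your auxiliary ``$z$ realizing $d(x,S(R_1))$ up to $\varepsilon$'' by writing the chain $d(x,T)\le\inf_{y\in S(R_1)}[d(x,y)+d(y,T)]\le d(x,S(R_1))+\delta$ directly, which spares one $\varepsilon$; your extra remarks on the $R_1=0$ case and the integrability needed for the limit are sound and slightly more explicit than the paper's treatment.
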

\begin{proof}
Let $\delta > r_{m}(S(R_1))$, and let $S$ be a $\delta$-cover for $S(R)$ with at most $m$ elements. Then for each $x \in A[R_1, R_2)$,
\[ d(x, A) \leq \inf_{y \in S(R_1)} \left[ d(x,y) + d(y,S) \right] \leq \inf_{y \in S(R_1)} \left[ d(x,y) + \delta \right] = d(x, S(R_1)) + \delta. \]
Since $d(x, S(R_1)) = d(x,x_0) - R_1$ by Lemma \ref{lem:distbd}, we have
\begin{align*} 
V_{m,p}^{(\mu)}(A[R_1, R_2)) & \leq \int_{A[R_1, R_2)} d(x,S)^p \di\mu(x)
\\ & \leq \int_{A[R_1, R_2)} \left[ (d_{x_0}(x) - R_1) + \delta \right]^p \di \mu(x)
\\ & = \int_{R_1}^{R_2} \left[ (R - R_1) + \delta \right]^p \di \mu_1(R).
\end{align*}
Taking the infimum of the right-hand side over all $\delta > r_{m}(S(R_1))$ yields the statement.
\end{proof}

\begin{figure}[H]
\centering
\includegraphics[scale=0.75]{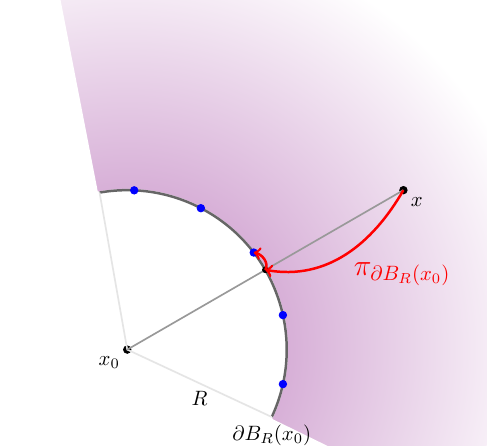}
\caption{Mass lying outside a sphere of radius $R$, first projected to the sphere, and then to the nearest element of a cover of the sphere.}
\end{figure}

Note that for $R_1 = 0$, $S(R_1)$ is the singleton $\{x_0\}$ and its covering radii are identically $0$.
Applying Lemma \ref{lem:quantann} to each annulus $X_i$, we then have
\begin{align*} 
V_{1,p}^{(\mu)}(X_0) & \leq \int_{0}^{R_1} R^p \di \mu_1(R); \\
V_{k^{d-1},p}^{(\mu)}(X_i) & \leq \int_{R_i}^{R_{i+1}} \left[ (R-R_i) + r_{k^{d-1}}(S(R_i)) \right]^p \di \mu_1(R), \quad 1 \leq i \leq N.
\end{align*}
Therefore, by the identity $(a+b)^p \leq 2^{p-1} (a^p + b^p)$ for $a, b \geq 0$,
\begin{align*} 
k^p V_{k^d+1,p}(\mu) & \leq 2^{p-1} \left[ k^p \sum_{i=0}^N \int_{R_i}^{R_{i+1}} (R-R_i)^p \di\mu_1(R) + \sum_{i=0}^N \int_{R_i}^{R_{i+1}} k^p r_{k^{d-1}}(S(R_i))^p \di \mu_1(R) \right]
\\ & \leq 2^{p-1} \left[ k^p \sum_{i=0}^N \int_{R_i}^{R_{i+1}} (R-R_i)^p \di\mu_1(R) + \int_0^\infty f(R)^p \di\mu_1(R) \right]
\end{align*}
where, since $f$ is nondecreasing, $N r_{N^{d-1}}(S(R_i)) \leq f(R_i) \leq f(R)$ for $R \in [R_i,R_{i+1})$.

Finally, by Lemma \ref{lem:piercefloor}, the radii $(R_i)_{i=1}^k$ can be chosen such that
\[ k^p \sum_{i=0}^k \int_{R_i}^{R_{i+1}} (R-R_i)^k \di\mu_1(R) \leq C \int_0^\infty (1+R^{p+\delta}) \di \mu_1(R) \]
for a constant $C = C(p,\delta)$ independent of $\mu$ and $k$. This proves the non-asymptotic upper bound. 

The upper bound on the limit supremum follows from the observation that, for any $N \geq 2$, taking $k = k(N) := \lfloor (N-1)^{1/d} \rfloor \geq 1$ such that $N \geq k^d+1$, we have
\[ N^{p/d} V_{N,p}(\mu) \leq N^{p/d} V_{k^d+1}(\mu) = \left(\frac{N}{k^d}\right)^{p/d} k^p V_{k^d+1}(\mu), \]
so taking the limit supremum and noting that $\lim_{N\to\infty} \frac{N}{k(N)^d} = 1$,
\begin{align*} 
\limsup_{N\to\infty} N^{p/d} V_{N,p}(\mu) & \leq \limsup_{k\to\infty} k^p V_{k^d+1,p}(\mu)
\\ & \leq C \limsup_{k\to\infty} \int_X \left(1+d(x, x_0)^{p+\delta}+f(d(x,x_0))^p \right) \di \mu(x). 
\end{align*}
This yields the last statement in Theorem \ref{thm:quantub}.
\end{proof}

\subsection{Proof of the asymptotic formula}\label{sect:proofasym}

\begin{proof}[Proof of Theorem \ref{thm:quantasym}]
The statement has already been proven by Kloeckner \cite{discapprox} and Iacobelli \cite[Thm. 1.4]{iacasym} for measures of compact support. Hence for any $K \subseteq M$ compact,
\[ Q_{p,d}^{(\mu)}(K) = \lim_{N \to \infty} N^{p/d} V_{N,p}^{(\mu)}(K) = Q_p([0,1]^d) \left( \int_K \rho(x)^{\frac{d}{d+p}} \di x \right)^{\frac{d+p}{d}}. \]
For each $\varepsilon > 0$, by Ulam's lemma there exists $K \subseteq M$ compact such that
\[ \int_{M \setminus K} \left[ 1 + d(x,x_0)^{p+\delta} + f(d(x,x_0))^p \right] \di \mu(x) < \varepsilon. \]
Consequently, by Theorem \ref{thm:quantub},
\[ \overline{Q}^{(\mu)}_{p,d}(M \setminus K)^p = \limsup_{N\to\infty} N^{p/d} V_{N,p}^{(\mu)}(M \setminus K) < C \varepsilon. \]
Lemma \ref{lem:quantcoeffadd} then implies that, for $r = \frac{p+d}{d}$,
\[ \overline{Q}_{p,d}(\mu)^r \leq {Q}^{(\mu)}_{p,d}(K)^r + (C \varepsilon)^{r/p}. \]
Taking the supremum of the right-hand side over all $K \subseteq M$ compact,
\[ \overline{Q}_{p,d}(\mu)^r \leq \sup_{K \subseteq M \text{ compact}} {Q}^{(\mu)}_{p,d}(K)^r + (C \varepsilon)^{r/p}. \]
Since this inequality holds for arbitrary $\varepsilon > 0$, we have
\[ \overline{Q}_{p,d}(\mu) \leq \sup_{K \subseteq M \text{ compact}} {Q}^{(\mu)}_{p,d}(K). \]
However, by the monotonicity of the quantization error, the right-hand side is also a lower bound on $\underline{Q}_{p,d}(\mu)$. This concludes that $Q_{p,d}(\mu)$ exists, and is given by
\begin{align*} 
Q_{p,d}(\mu) = \lim_{N \to \infty} N^{p/d} V_{N,p}(\mu) 
& = \sup_{K \subseteq M \text{ compact}} Q_{p,d}^{(\mu)}(K)
\\ & = Q_p([0,1]^d) \sup_{K \subseteq M \text{ compact}} \left( \int_K \rho(x)^{\frac{d}{d+p}} \di x \right)^{\frac{d+p}{d}}
\\ & = Q_p([0,1]^d) \left( \int_M \rho(x)^{\frac{d}{d+p}} \di x \right)^{\frac{d+p}{d}}. \qedhere
\end{align*}
\end{proof}

\section{Estimates on covering growth}\label{sect:covest}

We now present a few special cases in which the covering growth of manifolds can be estimated. We first formulate a general volumetric upper bound on covering numbers using Lemma \ref{lem:supregbd}, and then apply this bound on the specific settings of lower bounded Ricci curvature and geometric group actions.

\subsection{General volumetric estimates}\label{sect:volest}
In order to bound the expression $N(A; r) r^m$ over all $r > 0$ sufficiently small, we introduce the following definitions:

\begin{defin}
Let $X$ be a metric space, $\nu$ a Borel measure on $X$. 

For $A \subseteq X$ compact and $m, r_0 > 0$, define
\begin{align*} 
C_m(A; r_0) := \sup_{\substack{ 0 < r \leq r_0}} N(A; r) r^m; \quad
\vartheta_m(A; r_0) := \inf_{\substack{x \in A \\ 0 < r \leq r_0}} r^{-m} \nu(B_r(x)). 
\end{align*}
\end{defin}

The quantity $\vartheta_d(A; r_0)$ can be thought of as the lower density of $A$ with respect to the measure $\nu$, while $C_d(A; r_0)$ is a coarse quantification of the size of the set $A$. Indeed, as $r \to 0^+$, the quantity $N(A; r) r^m$ becomes comparable to the $m$-dimensional \emph{Minkowski content} of $A$; for more detail, refer to Appendix \ref{app:mincon}. We will use $C_m(A; r_0)$ to control the covering growth of spheres with $A = \partial B_R(x_0)$.

\begin{example}\label{ex:rdvartheta}
The $d$-dimensional Lebesgue measure $\L^d$ on $\R^d$ satisfies $\L^d(B_r(x)) = \omega_d r^d$ for all $x \in \R^d$ and $r > 0$, where $\omega_d := \L^d(B_1(0))$ is the volume of the unit $d$-dimensional ball. 

Consequently, for the choice $\nu = \L^d$ on $\R^d$, $\vartheta_d(A; r_0) = \omega_d$ independently of $A$ and $r_0$.
\end{example}

With these definitions, we can formulate the following general upper bound on covering numbers:

\begin{prop}\label{prop:supregbd}
Let $X$ be a metric space, $\nu$ a Borel measure on $X$, $A \subseteq X$ compact. For each $d \geq m > 0$ and $r_0 > 0$, we have
\begin{equation}\label{eq:supregbd}
 \sup_{\substack{N \in \N \\ r_N(A) \leq 2 r_0}} N r_N(A)^m \leq C_m(A; 2 r_0) \leq 2^m \vartheta_d(A; r_0)^{-1} \sup_{0 < r \leq r_0} \frac{\nu(A^r)}{r^{d-m}}. 
\end{equation}
If $\vartheta_d(A; r_0) = 0$, the right-hand side is set to $\infty$ and the bound is trivial.
\end{prop}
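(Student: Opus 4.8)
The plan is to prove the two inequalities in \eqref{eq:supregbd} separately, the left one being essentially a definitional unwinding and the right one being an application of the volumetric bound in Lemma \ref{lem:supregbd}. For the left inequality, fix $N \in \N$ with $r_N(A) \leq 2r_0$. By the remark following the definition of covering radii, for every $r > r_N(A)$ we have $N(A; r) \leq N$. Choosing any such $r$ with $r \leq 2r_0$ (possible since $r_N(A) < 2r_0$, or handling the boundary case $r_N(A) = 2r_0$ by a limiting argument as $r \downarrow r_N(A)$ combined with monotonicity of the supremum), we get $N r^m \geq N(A;r) \cdot (\text{something})$ — more precisely I would argue $N r_N(A)^m = \lim_{r \downarrow r_N(A)} N r^m$ and since $N \geq N(A; r)$ for each such $r$, the quantity $N r^m \geq N(A;r) r^m \leq C_m(A; 2r_0)$ once $r \leq 2r_0$; taking the limit $r \downarrow r_N(A)$ and then the supremum over admissible $N$ gives $\sup_{N: r_N(A) \leq 2r_0} N r_N(A)^m \leq C_m(A; 2r_0)$. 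One has to be a little careful that the supremum defining $C_m$ runs over $0 < r \leq 2r_0$, which is exactly the range we land in.

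For the right inequality, the key step is the chain $N(A; 2r) \leq P(A; r) \leq \vartheta_d(A; r_0)^{-1}\, \nu(A^r)\, r^{-d}$ valid for all $0 < r \leq r_0$, which is Lemma \ref{lem:supregbd} applied with the lower volume bound $\nu(B_r(x)) \geq \vartheta_d(A; r_0) r^d$ that holds for all $x \in A$ and $0 < r \leq r_0$ by definition of $\vartheta_d(A; r_0)$. From this, for $0 < s \leq 2r_0$ write $s = 2r$ with $0 < r \leq r_0$, so that
\[ N(A; s)\, s^m \leq \vartheta_d(A; r_0)^{-1}\, \nu(A^{s/2})\, (s/2)^{-d}\, s^m = 2^{d-m}\, \vartheta_d(A; r_0)^{-1}\, \frac{\nu(A^{s/2})}{(s/2)^{d-m}}. \]
Hmm — I need the constant $2^m$, not $2^{d-m}$; let me recompute: $s^m (s/2)^{-d} = 2^d s^{m-d} = 2^d \cdot 2^{-(d-m)} (s/2)^{-(d-m)} = 2^m (s/2)^{-(d-m)}$, so in fact $N(A;s)\, s^m \leq 2^m \vartheta_d(A;r_0)^{-1} \nu(A^{s/2})/(s/2)^{d-m}$, and as $s$ ranges over $(0, 2r_0]$, $s/2$ ranges over $(0, r_0]$, so the supremum over $s$ of the right side is $\leq 2^m \vartheta_d(A; r_0)^{-1} \sup_{0 < r \leq r_0} \nu(A^r) r^{-(d-m)}$. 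Taking the supremum over $0 < s \leq 2r_0$ on the left gives $C_m(A; 2r_0)$ on the nose, completing the second inequality.

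Finally I would address the degenerate case: if $\vartheta_d(A; r_0) = 0$ the right-hand side is declared $\infty$ and there is nothing to prove; and I should note that $\vartheta_d(A; r_0) < \infty$ is automatic (take any fixed $x \in A$ and small $r$, using $\nu(B_r(x)) < \infty$ which may fail in general — actually this does not matter since we only need the bound, and if $\vartheta_d > 0$ then $\nu(B_r(x)) \geq \vartheta_d r^d > 0$, which is all Lemma \ref{lem:supregbd} uses). I do not expect a genuine obstacle here; the only mild subtlety is bookkeeping with the factor $2$ (coinciding the range $(0, 2r_0]$ in $C_m(A; 2r_0)$ with $(0, r_0]$ in the hypotheses of Lemma \ref{lem:supregbd} and in $\vartheta_d$), and handling the boundary value $r_N(A) = 2r_0$ in the left inequality via a one-sided limit. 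Both are routine.
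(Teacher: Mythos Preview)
Your argument for the second inequality is correct and matches the paper's proof (the paper writes $r$ where you write $s$ and uses $r/2$ in place of your $r$, but the computation is identical, including the bookkeeping that recovers the factor $2^m$).

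However, your argument for the first inequality has the direction backwards. You correctly observe that $N(A;r) \leq N$ for $r > r_N(A)$, but this gives $N(A;r)\,r^m \leq N r^m$, which is the wrong way: from $N r^m \geq N(A;r)\,r^m$ and $N(A;r)\,r^m \leq C_m(A;2r_0)$ you cannot conclude $N r^m \leq C_m(A;2r_0)$. The chain you wrote, ``$N r^m \geq N(A;r)\,r^m \leq C_m(A;2r_0)$'', does not yield the desired bound.

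The fix is to use the complementary inequality from the Remark following the definitions: for $A$ compact one has $N(A; r_N(A)) > N$ (there is no $r_N(A)$-cover with only $N$ elements). Hence, assuming $r_N(A) > 0$ (otherwise $N r_N(A)^m = 0$ and there is nothing to prove),
\[
N\, r_N(A)^m \;<\; N(A; r_N(A))\, r_N(A)^m \;\leq\; \sup_{0 < r \leq 2r_0} N(A;r)\, r^m \;=\; C_m(A; 2r_0),
\]
since $r_N(A) \leq 2r_0$ by hypothesis. This is exactly the paper's argument. Equivalently, one could approach $r_N(A)$ from \emph{below} rather than above: for $0 < r < r_N(A) \leq 2r_0$ one has $N(A;r) > N$, hence $N r^m < N(A;r)\, r^m \leq C_m(A; 2r_0)$, and letting $r \uparrow r_N(A)$ gives the result. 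Either way, the key is to use an inequality of the form $N < N(A;\cdot)$, not $N \geq N(A;\cdot)$.
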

\begin{proof}
For the former inequality, observe that for each $N \in \N$ with $r_N(A) \leq 2 r_0$, $N < N(A; r_N(A))$ hence
\[ \sup_{\substack{N \in \N \\ r_N(A) \leq 2 r_0}} N r_N(A)^m \leq \sup_{\substack{N \in \N \\ r_N(A) \leq 2 r_0}} N(A; r_N(A)) r_N(A)^m \leq \sup_{0 < r \leq 2 r_0} N(A; r) r^m =: C_m(A; 2 r_0). \]
For the latter inequality, assume wlog $\vartheta_d(A; r_0) > 0$. 
Let $0 < r \leq 2 r_0$. For $\vartheta = \vartheta_d(A; r_0)$, the inequality $\nu(B_{r/2}(x)) \geq \vartheta (\frac{r}{2})^d$ holds for all $x \in A$, hence by Lemma \ref{lem:supregbd},
\[ N(A; r) \leq P\left(A; \frac{r}{2}\right) \leq \vartheta^{-1} \nu(A^{r/2}) \left(\frac{r}{2}\right)^{-d} = 2^m r^{-m} \vartheta^{-1} \frac{\nu(A^{r/2})}{(r/2)^{d-m}}. \]
Multiplying by $r^m$ and taking the supremum over all $r \in (0, 2 r_0]$ yields the latter inequality.
\end{proof}

We now restrict to geodesic spheres in order to bound the covering growth of manifolds or geodesic spaces.
Let $(M, g)$ be a complete $d$-dimensional Riemannian manifold and fix $x_0 \in M$. We apply Proposition \ref{prop:supregbd} to $A = \partial B_R(x_0)$ with $m = d - 1$ and $\nu = \vol_M$.

For $A = \partial B_R(x_0)$, observe that $A^{r} \subseteq A[R-r, R+r) = B_{R+r}(x_0) \setminus B_{R-r}(x_0)$, thus
\[ \vol_M(A^{r}) \leq \vol_M(B_{R+r}(x_0)) - \vol_M(B_{R-r}(x_0)). \]
For simplicity of notation, we define the following quantity:
\[ P_M(R; r) = \frac{\vol_M(B_{R+r}(x_0)) - \vol_M(B_{R-r}(x_0))}{2r}, \quad 0 < r \leq R < \infty. \]
This quantity is akin to an approximate Minkowski content for the sphere $\partial B_R(x_0)$, and will converge to $\cH^{d-1}(\partial B_R(x_0))$ as $r \to 0^+$ for almost every value of $R$. 

\begin{figure}[H]
\centering
\includegraphics[scale=0.7]{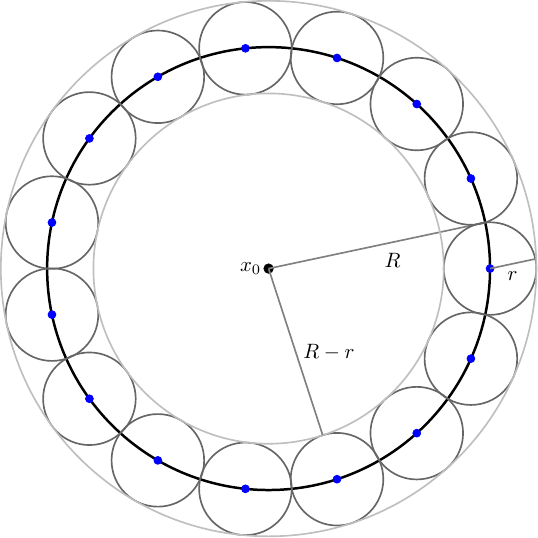}
\caption{An $r$-packing on the sphere $\partial B_R(x_0)$, contained in the annulus $B_{R+r}(x_0) \setminus B_{R-r}(x_0)$.}
\end{figure}

Therefore, by Proposition \ref{prop:supregbd}, for each $r_0 > 0$, we have
\begin{equation}\label{eq:covmfdub}
 \sup_{\substack{N \in \N \\ r_N(\partial B_R(x_0)) \leq 2 r_0}} N r_N(\partial B_R(x_0))^{d-1} \leq C_{d-1}(\partial B_R(x_0); 2 r_0) \leq \vartheta_d(\bar B_R(x_0); r_0)^{-1} \sup_{0 < r \leq r_0} P_M(R; r).
\end{equation}
In particular, for $r_0 = R/2$, $r_N(\partial B_R(x_0)) \leq R = 2 r_0$ thus
\begin{equation}
 \sup_{N \in \N} N r_N(\partial B_R(x_0))^{d-1} \leq C_{d-1}(\partial B_R(x_0); R) \leq \vartheta_d(\bar B_R(x_0); R/2)^{-1} \sup_{0 < r \leq R/2} P_M(R; r).
\end{equation}
Up to taking $(d-1)$th roots, the right-hand side thus provides a general upper bound for the covering growth of $M$.
We will estimate the two quantities on the right-hand side separately in the various special cases we will be considering in this section.

\begin{example}[Spheres in $\R^d$]
Take $\bS^{d-1}_R = \partial B_R(0) \subset \R^d$. For $\nu = \L^d$ and $m = d-1$, Proposition \ref{prop:supregbd} yields
\[ C_{d-1}(\bS^{d-1}_R; 2 r_0) \leq 2^{d-1} \vartheta_d(\bS^{d-1}_R; r_0)^{-1} \sup_{0 < r \leq r_0} \frac{\L^d((\bS^{d-1}_R)^r)}{r}. \]
We have $\vartheta_d(\bS^{d-1}_R; r_0) = \omega_d$ by Example \ref{ex:rdvartheta}, and since $(\bS^{d-1}_R)^r = B_{R+r}(0) \setminus \bar B_{R-r}(0)$,
\begin{align*} 
\sup_{0 < r \leq r_0} \frac{\L^d((\bS^{d-1}_R)^r)}{r} 
& = \sup_{0 < r \leq r_0} \frac{\omega_d (R+r)^d - \omega_d (R-r)^d}{r}
\\ & = 2 \omega_d \sup_{0 < r \leq r_0} \frac{1}{2r} \int_{R-r}^{R+r} d t^{d-1} \di t
\\ & \leq 2 \omega_d d (R+r_0)^{d-1},
\end{align*}
where in the last inequality we apply the convexity of $t \mapsto t^d$.
Therefore
\[ C_{d-1}(\bS^{d-1}_R; 2 r_0) \leq 2^d d (R+r_0)^{d-1}. \]
In particular, for all $r_0 \leq R/2$, the right-hand side is uniformly bounded by a constant multiple of $R^{d-1}$.
\end{example}

\begin{remark*}
Let $T \colon X \to Y$ be an $L$-Lipschitz map between metric spaces, $A \subseteq X$ compact.
Then since $N(T(A); L r) \leq N(A; r)$ for $T \colon X \to Y$ an $L$-Lipschitz map, given $r_0 > 0$ we have
\[ C_m(T(A); L r_0) = \sup_{\substack{ 0 < r \leq r_0}} N(T(A); L r) (L r)^m \leq L^m \sup_{\substack{ 0 < r \leq r_0}} N(A; r) r^m = L^m C_m(A; r_0). \]
\end{remark*}

\begin{example}[$\A_{x_0}$ bounds covering numbers]\label{ex:ax0covnum}
Let $(M, g)$ be a complete $d$-dimensional Riemannian manifold, $x_0 \in M$, and let $\A_{x_0}$ be defined as in Theorem \ref{thm:iacub}. Then $\exp_{x_0}$, restricted to a map from $\bS^{d-1}_R = \partial B_R(0) \subseteq T_v M$ to $\bar B_R(x_0) \subseteq M$, is $L$-Lipschitz with
\[ L = \sup_{v \in \bS^{d-1}_R} \|d_v \exp_{x_0}\| = \sup_{\substack{v \in \bS^{d-1}_R, \\ w \in T_v \bS^{d-1}_R, \\ \|w\|_v = 1}} \left\| d_v \exp_{x_0}[w] \right\|_{\exp_{x_0}(v)} = \frac{\A_{x_0}(R)}{R}. \]
Consequently,
\[ C_{d-1}(\partial B_R(x_0); 2 r_0) \leq L^{d-1} C_{d-1}\left(\bS^{d-1}_R; \frac{2 r_0}{L} \right). \]
Since $T_v M$, equipped with the norm induced by $g_v$, is isometric to $\R^d$, we obtain
\begin{align*}
C_{d-1}(\partial B_R(x_0); 2 r_0)
& \leq 2^d d L^{d-1} \left(R+\frac{2r_0}{L}\right)^{d-1} 
\\ & = 2^d d \left( \frac{\A_{x_0}(R)}{R} \right)^{d-1} R^{d-1} \left(1+\frac{2r_0}{\A_{x_0}(R)}\right)^{d-1}
\\ & = 2^d d \A_{x_0}(R)^{d-1} \left(1+\frac{2r_0}{\A_{x_0}(R)}\right)^{d-1}.
\end{align*}
In particular, since $L \geq 1$ (as long as $\partial B_R(x_0)$ contains antipodal points) we have $\A_{x_0}(R) \geq R$ thus 
\[ C_{d-1}(\partial B_R(x_0); 2 r_0) \leq 2^{2d-1} d \A_{x_0}(R)^{d-1} \quad \textup{for all } r_0 \leq R/2. \]
\end{example}

Thus $\A_{x_0}$ always yields a viable upper bound on the covering growth of Riemannian manifolds. The following special case illustrates the difference between the two notions of growth:

\begin{example}[Geodesic spheres for $d = 2$]\label{ex:twodim}
 Let $M$ be a $2$-dimensional complete Riemannian manifold, $x_0 \in M$. For a.e. $R > 0$, the sphere $\partial B_R(x_0)$ is a $1$-dimensional compact $\C^1$ submanifold of $M$ (e.g. by the implicit function theorem), and is thus the finite disjoint union of closed rectifiable curves. In this case, we can control the covering radii of $\partial B_R(x_0)$ in terms of its total length.
 
 For simplicity, assume $\partial B_R(x_0)$ is a closed rectifiable curve of length $L$, and take an arc length parametrization $s \colon [0,L] \to \partial B_R(x_0)$, which is a $1$-Lipschitz map. Note also that $r_k([0,L]) \leq \frac{L}{2k}$ (in fact, equality holds), covering $[0,L]$ by the $k$ points
 \[ x_i := \frac{2i-1}{2k} L \in [0,L], \quad i = 1, \ldots, k. \]
 Compare \cite[Ex. 5.5]{quantbook}. 
 Then for any $k \in \N$, we have
 \[ k r_k(\partial B_R(x_0)) = k r_k(s([0,L])) \leq k r_k([0,L]) = \frac{L}{2}. \]
The same upper bound will carry over when $\partial B_R(x_0)$ is a finite union of rectifiable curves. Denoting the total length (equiv. $1$-dimensional Hausdorff measure) of $\partial B_R(x_0)$ by $L_{x_0}(R)$, this implies that $M$ has $O(L_{x_0})$ covering growth.
\end{example}

In light of this observation, one can construct examples of $2$-manifolds where $L_{x_0}$ grows at a much slower rate than $\A_{x_0}$, e.g. when the derivative of the exponential map is large on a negligible sliver of each sphere. Indeed, we should expect $L_{x_0}$ to behave as the $L^1$ norm of $\di \exp_{x_0}$ over $\bS_{R} \subset T_{x_0} M$ instead of the $L^\infty$ norm. See Figure \ref{fig:parabfold} for an illustration of a surface where $L_{x_0}$ grows linearly but $\A_{x_0}$ at least exponentially with respect to the radius.

This correspondence between covering growth and the perimeters of spheres does not necessarily hold in higher dimensions, but it is always true that perimeters yield a \emph{lower bound} on the covering growth of manifolds; we illustrate this in Appendix \ref{app:mincon}.

\begin{figure}[H]\label{fig:parabfold}
\centering
\includegraphics[scale=0.75]{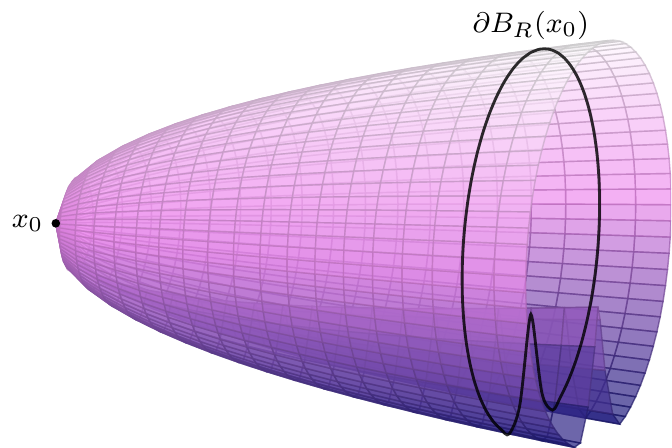}
\caption{
A parabolic $2$-dimensional surface on $\R^3$ with a fold along one direction. The surface is negatively curved along the fold, and the curvature can be made arbitrarily negative by sharpening the fold, causing $\di \exp_{x_0}$ to blow up along the same direction in the tangent space at $x_0$ and making $\A_{x_0}$ grow exponentially with $R$. However, the perimeters of spheres remain at most linearly proportional to the radius due to the overall parabolic growth of the surface.
}
\end{figure}

\subsection{Lower bounded Ricci curvature}\label{sect:riclb}

When the Ricci curvature of the manifold is bounded from below, both expressions in the right-hand side of \eqref{eq:covmfdub} can be controlled in terms of the volumes of balls. This allows us to show, in particular, that complete Riemannian manifolds with nonnegative Ricci curvature have $O(R)$ covering growth.

We first recall the \emph{Bishop-Gromov theorem}, which controls the volumes of balls under a lower bound on the Ricci curvature.

\begin{notation*}
We denote by $M^d_\kappa$ the $d$-dimensional complete, simply connected model Riemannian manifold with constant sectional curvature $\kappa \in \R$:
\[ M^d_\kappa = \left\{ \begin{matrix} \sqrt{\kappa} \bS^d, & \kappa > 0; \\ \R^d, & \kappa = 0; \\ \sqrt{-\kappa} \H^d, & \kappa < 0. \end{matrix} \right. \]
The diameter of $M^d_\kappa$ is
\[ D_\kappa = \diam(M^d_\kappa) = \left\{ \begin{matrix} \pi/\sqrt{\kappa}, & \kappa > 0; \\ \infty, & \kappa \leq 0. \end{matrix} \right. \]
The volume of a ball of radius $R \leq D_\kappa$ in $M^d_\kappa$ is given by
\[ \vol^d_\kappa(R) := d \omega_d \int_0^R \sin_\kappa^{d-1}(r) \di r, \]
where $\omega_d$ is the volume of the unit ball in $\R^d$, and
\[ \sin_\kappa(r) := \left\{ \begin{matrix} \frac{1}{\sqrt{\kappa}} \sin(\sqrt{\kappa} r), & \kappa > 0; \\ r, & \kappa = 0; \\ \frac{1}{\sqrt{-\kappa}} \sinh(\sqrt{-\kappa} r), & \kappa < 0. \end{matrix} \right. \]
The function $\sin_\kappa$ is exactly the solution to the \emph{Jacobi equation} $y^{\prime \prime}(r) + \kappa y(r) = 0$ with $y(0) = 0$ for constant sectional curvature $\kappa$.
\end{notation*}

\begin{theorem}[Bishop-Gromov theorem {\cite[\S 2.1]{Gromov1981}}]
Let $M$ be a complete $d$-dimensional Riemannian manifold with Ricci curvature bounded from below: $\mathrm{Ric} \geq (d-1)\kappa g$ for some $\kappa \in \R$. 

Then for any fixed $x \in M$, the function
\[ r \mapsto \frac{\vol_M(B_r(x))}{\vol^d_\kappa(r)} \]
is nonincreasing, and tends to $1$ as $r \to 0^+$. In other words, for $0 < r \leq R \leq D_\kappa$,
\[ \frac{\vol_M(B_r(x))}{\vol_M(B_R(x))} \geq \frac{\vol^d_\kappa(r)}{\vol^d_\kappa(R)} = \frac{\int_0^r \sin_\kappa(t) \di t}{\int_0^R \sin_\kappa(t) \di t}. \]
\end{theorem}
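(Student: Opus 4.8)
I would prove this by the classical route: geodesic polar coordinates together with a scalar Riccati comparison, which uses only the Ricci lower bound.

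First, fix $x \in M$ and pass to geodesic polar coordinates centred at $x$. On the star-shaped injectivity domain $\{r < c(\theta)\}$, where $c(\theta) \in (0,\infty]$ is the distance to the cut locus along the unit direction $\theta \in \bS^{d-1} \subset T_x M$, one has $\di\vol_M = \mathcal{J}(r,\theta)\,\di r\,\di\theta$ with $\mathcal{J}(r,\theta) = \det A(r)$, where $A(r)$ is the matrix, in a parallel orthonormal frame along the radial geodesic $\gamma_\theta$, of the Jacobi fields $J_1,\dots,J_{d-1}$ determined by $J_i(0)=0$ and $J_i'(0)=e_i$ orthonormal. Extending $\mathcal{J}(\cdot,\theta)$ by $0$ for $r \geq c(\theta)$ and using that the cut locus is $\vol_M$-null, one gets $\vol_M(B_R(x)) = \int_{\bS^{d-1}}\int_0^R \mathcal{J}(r,\theta)\,\di r\,\di\theta$. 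In the model $M^d_\kappa$ the corresponding quantity is $\mathcal{J}_\kappa(r) = \sin_\kappa^{d-1}(r)$, so $\vol^d_\kappa(R) = |\bS^{d-1}|\int_0^R \mathcal{J}_\kappa(r)\,\di r$. The whole statement then reduces to showing that, for each fixed $\theta$, the ratio $r \mapsto \mathcal{J}(r,\theta)/\mathcal{J}_\kappa(r)$ is nonincreasing on $(0,\infty)$ and tends to $1$ as $r \to 0^+$, plus an elementary integration lemma.

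The heart of the argument is the Riccati comparison. On $\{r < c(\theta)\}$ let $m(r,\theta) := \partial_r\log\mathcal{J}(r,\theta) = \mathrm{tr}\,U$, where $U := A'A^{-1}$ is the shape operator of the geodesic sphere; a Wronskian computation (using $A(0)=0$, $A'(0)=I$ and symmetry of the curvature tensor) shows $U$ is symmetric. The Jacobi equation $A'' + \mathcal{R}A = 0$, with $\mathrm{tr}\,\mathcal{R} = \mathrm{Ric}(\partial_r,\partial_r)$, gives the matrix Riccati identity $U' + U^2 + \mathcal{R} = 0$; taking traces and using $\mathrm{tr}(U^2) \geq (\mathrm{tr}\,U)^2/(d-1)$ (Cauchy--Schwarz, $U$ symmetric) together with $\mathrm{Ric} \geq (d-1)\kappa\,g$ yields
\[ \partial_r m \;\leq\; -(d-1)\kappa - \frac{m^2}{d-1}. \]
The model quantity $m_\kappa(r) := (d-1)\,\sin_\kappa'(r)/\sin_\kappa(r)$ satisfies this with equality, and $m(r,\theta), m_\kappa(r) = \tfrac{d-1}{r} + O(r)$ as $r \to 0^+$. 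Writing the inequality for $\phi := m - m_\kappa$ as the linear differential inequality $\phi' + \tfrac{m+m_\kappa}{d-1}\,\phi \leq 0$ and exploiting the matched singular behaviour at $r = 0$ (a standard Riccati comparison), one concludes $m(r,\theta) \leq m_\kappa(r)$ for $r < c(\theta)$. Hence $\partial_r\big(\log\mathcal{J}(r,\theta) - \log\mathcal{J}_\kappa(r)\big) \leq 0$, so $\mathcal{J}(r,\theta)/\mathcal{J}_\kappa(r)$ is nonincreasing on $(0,c(\theta))$; and since a positive nonincreasing function that is set to $0$ from $r = c(\theta)$ onward remains nonincreasing, the ratio is nonincreasing on all of $(0,\infty)$.

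To finish, I would invoke the elementary fact that if $f,g\colon(0,\infty)\to[0,\infty)$ with $g>0$ and $f/g$ nonincreasing, then $R \mapsto (\int_0^R f)/(\int_0^R g)$ is nonincreasing: the derivative has the sign of $f(R)\int_0^R g - g(R)\int_0^R f = g(R)\int_0^R g\cdot\big(\tfrac{f(R)}{g(R)} - \tfrac{\int_0^R f}{\int_0^R g}\big) \leq 0$, since $\int_0^R f/\int_0^R g$ is a $g$-weighted average over $[0,R]$ of the nonincreasing function $f/g$ and so dominates its value at $R$. Applied with $f = \mathcal{J}(\cdot,\theta)$ and $g = \mathcal{J}_\kappa$, this makes each $R \mapsto \int_0^R\mathcal{J}(r,\theta)\,\di r\,/\int_0^R\mathcal{J}_\kappa(r)\,\di r$ nonincreasing; moreover it is $\leq 1$, since as $R \to 0^+$ the matched asymptotics $\mathcal{J}(r,\theta) = r^{d-1}(1+O(r^2))$, $\mathcal{J}_\kappa(r) = r^{d-1}(1+O(r^2))$ force this ratio to $1$, and a nonincreasing function with limit $1$ at $0^+$ is bounded by $1$. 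Since
\[ \frac{\vol_M(B_R(x))}{\vol^d_\kappa(R)} = \frac{1}{|\bS^{d-1}|}\int_{\bS^{d-1}} \frac{\int_0^R \mathcal{J}(r,\theta)\,\di r}{\int_0^R \mathcal{J}_\kappa(r)\,\di r}\,\di\theta \]
is an average of nonincreasing functions of $R$, it is nonincreasing; by dominated convergence (integrands $\leq 1$) it tends to $1$ as $R \to 0^+$, and restricting to $R \leq D_\kappa$ is exactly what keeps $\sin_\kappa$, hence $\mathcal{J}_\kappa$, positive when $\kappa>0$, so dividing the two volume formulas yields the stated inequality for $0<r\leq R\leq D_\kappa$. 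The main obstacle is entirely in the third paragraph: making the Riccati comparison rigorous through the singularity at $r=0$, and the differential-geometric bookkeeping (symmetry of $U$, smoothness of $\mathcal{J}$ on the injectivity domain, nullity of the cut locus, monotonicity-preservation of the zero extension); everything else is soft analysis.
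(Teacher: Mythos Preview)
The paper does not give its own proof of the Bishop--Gromov theorem: it is stated with a citation to Gromov and then used as a black box in the subsequent volume estimates (Lemmas \ref{lem:lamdbd}, \ref{lem:perimbd}, etc.). So there is nothing in the paper to compare your argument against.

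That said, your proposal is the standard and correct proof. The geodesic-polar-coordinate setup, the matrix Riccati equation $U' + U^2 + \mathcal R = 0$ for the shape operator, the trace inequality $\mathrm{tr}(U^2)\ge(\mathrm{tr}\,U)^2/(d-1)$ from Cauchy--Schwarz, and the scalar Riccati comparison with $m_\kappa$ are exactly the classical route; the integration lemma and the $\bS^{d-1}$-averaging at the end are fine. Your handling of the cut locus (zero-extend $\mathcal J$, monotonicity survives) and of the $r\to 0^+$ asymptotics is also correct. The only place one has to be slightly careful, which you flag yourself, is pushing the comparison $\phi' + \tfrac{m+m_\kappa}{d-1}\phi \le 0$ through the $1/r$ singularity at the origin; the standard fix is to observe that $m(r,\theta)-m_\kappa(r)=O(r)$ near $0$, multiply by the integrating factor, and let the lower limit of integration tend to $0$. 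Nothing is missing.
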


The Bishop-Gromov theorem thus provides simultaneous lower bounds on the volumes of small balls and upper bounds on those of large balls. This will allow us to estimate both quantities $\vartheta_d(\bar B_R(x_0); r_0)$ and $P_M(R; r)$ in \eqref{eq:covmfdub}.

\begin{lemma}[Bound on $\vartheta_d$] \label{lem:lamdbd}
Suppose $\mathrm{Ric} \geq (d-1) \kappa g$ on $M$, for some $\kappa \leq 0$. 

Fix $x_0 \in M$ and $R \geq r_0 > 0$. Then for any $r_1 \geq 0$,
\[ \vartheta_d(\bar B_R(x_0); r_0) \geq \omega_d \frac{\vol_M(B_{r_0+r_1}(x_0))}{\vol^d_\kappa({R+r_0+r_1})}. \]
\end{lemma}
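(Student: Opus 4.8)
The plan is to verify the bound pointwise: it suffices to show that for every $x\in\bar B_R(x_0)$ and every $0<r\le r_0$ one has
\[
\vol_M(B_r(x))\ \ge\ \omega_d\,\frac{\vol_M(B_{r_0+r_1}(x_0))}{\vol^d_\kappa(R+r_0+r_1)}\,r^d,
\]
since dividing by $r^d$ and taking the infimum over such $x$ and $r$ is exactly the definition of $\vartheta_d(\bar B_R(x_0);r_0)$. First I would record the elementary inclusion coming from the triangle inequality: if $x\in\bar B_R(x_0)$, then $B_{r_0+r_1}(x_0)\subseteq B_{R+r_0+r_1}(x)$, and hence
\[
\vol_M\bigl(B_{R+r_0+r_1}(x)\bigr)\ \ge\ \vol_M\bigl(B_{r_0+r_1}(x_0)\bigr).
\]
Next I would apply the Bishop--Gromov theorem at the centre $x$ with the radii $r\le R+r_0+r_1$; here the hypothesis $\kappa\le 0$ is used precisely so that $D_\kappa=\infty$ and no restriction on the radii is needed. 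This gives $\vol_M(B_r(x))/\vol_M(B_{R+r_0+r_1}(x))\ge \vol^d_\kappa(r)/\vol^d_\kappa(R+r_0+r_1)$, and combining with the previous inclusion yields
\[
\vol_M(B_r(x))\ \ge\ \frac{\vol^d_\kappa(r)}{\vol^d_\kappa(R+r_0+r_1)}\,\vol_M\bigl(B_{r_0+r_1}(x_0)\bigr).
\]

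The remaining ingredient is the comparison $\vol^d_\kappa(r)\ge\omega_d r^d$ for $\kappa\le 0$. This follows from the pointwise inequality $\sin_\kappa(t)\ge t$ for all $t\ge 0$ when $\kappa\le 0$ (which is immediate for $\kappa=0$ and is the inequality $\sinh s\ge s$ after rescaling for $\kappa<0$), so that
\[
\vol^d_\kappa(r)=d\omega_d\int_0^r\sin_\kappa^{d-1}(t)\di t\ \ge\ d\omega_d\int_0^r t^{d-1}\di t\ =\ \omega_d r^d.
\]
Substituting this lower bound for $\vol^d_\kappa(r)$ into the previous display gives the desired pointwise estimate, and taking the infimum over $x\in\bar B_R(x_0)$ and $0<r\le r_0$ completes the proof.

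I do not expect a genuine obstacle here: the argument is a direct packaging of Bishop--Gromov together with a one-line convexity/monotonicity estimate on $\sin_\kappa$. The only points that require care are (i) checking that all the radii involved lie in the admissible range for Bishop--Gromov, which is automatic since $D_\kappa=\infty$ for $\kappa\le 0$, and (ii) keeping track of which ball is centred at $x$ versus at $x_0$, which is handled by the single triangle-inequality inclusion above. If one later wants the analogous statement for $\kappa>0$ it would be natural to impose $R+r_0+r_1\le D_\kappa$, but that case is not needed here.
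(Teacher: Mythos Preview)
Your proof is correct and follows essentially the same route as the paper: apply Bishop--Gromov at the centre $x$ to compare radii $r$ and $R+r_0+r_1$, use the triangle-inequality inclusion $B_{r_0+r_1}(x_0)\subseteq B_{R+r_0+r_1}(x)$, and then invoke $\vol^d_\kappa(r)\ge\omega_d r^d$ for $\kappa\le 0$. The only cosmetic difference is that you justify the last inequality via $\sin_\kappa(t)\ge t$ directly, whereas the paper phrases it as another instance of Bishop--Gromov; both are fine.
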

\begin{proof}
Let $x \in \bar B_R(x_0)$, $0 < r \leq r_0$. Then by Bishop-Gromov,
\[ \frac{\vol_M(B_r(x))}{\vol^d_\kappa(r)} \geq \frac{\vol_M(B_{R+r_0+r_1}(x))}{\vol^d_\kappa({R+r_0+r_1})} \geq \frac{\vol_M(B_{r_0+r_1}(x_0))}{\vol^d_\kappa({R+r_0+r_1})}, \]
where $B_{r_0+r_1}(x_0) \subseteq B_{R+r_0+r_1}(x)$ since $d(x, x_0) = R$. Thus from the fact that $\vol^d_\kappa(r) \geq \omega_d r^d$ for $\kappa \leq 0$ (again by Bishop-Gromov), we obtain that
\[ \vol_M(B_r(x)) \geq \frac{\vol_M(B_{r_0+r_1}(x_0))}{\vol^d_\kappa({R+r_0+r_1})} \vol^d_\kappa(r) \geq \omega_d \frac{\vol_M(B_{r_0+r_1}(x_0))}{\vol^d_\kappa({R+r_0+r_1})} r^d. \qedhere \]
\end{proof}

\begin{figure}[H]
\centering
\includegraphics[scale=0.7]{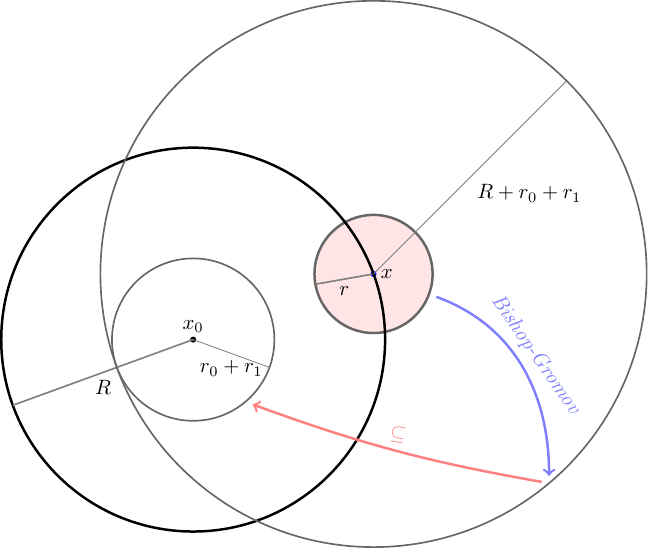}
\caption{Volume comparison argument for bound on $\vartheta_d$.}
\end{figure}

\begin{lemma}[Bound on $P_M$] \label{lem:perimbd}
Suppose $\mathrm{Ric} \geq (d-1)\kappa g$, $\kappa \leq 0$.

Fix $x_0 \in M$. Then for all $0 < r < R < \infty$, 
\[ P_M(R; r) \leq \frac{P^d_\kappa(R; r)}{\vol^d_\kappa({R+r})} \vol_M(B_{R+r}(x_0)), \]
where 
\[ P^d_\kappa(R; r) = \frac{\vol^d_\kappa({R+r}) - \vol^d_\kappa({R-r})}{2r} = d \omega_d \frac{1}{2r} \int_{R-r}^{R+r} \sin_\kappa^{d-1}(t) \di t \leq d \omega_d \sin_\kappa^{d-1}(R+r). \]
\end{lemma}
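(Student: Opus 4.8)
The plan is to read off the displayed bound on $P_M(R;r)$ directly from the Bishop--Gromov volume comparison theorem, applied at the basepoint $x_0$ to the two radii $R-r$ and $R+r$. Since $\kappa\le 0$ forces $D_\kappa=\infty$, for any $0<r<R$ we have $0<R-r\le R+r\le D_\kappa$, so Bishop--Gromov applies with ``small radius'' $R-r$ and ``large radius'' $R+r$ and yields
\[ \frac{\vol_M(B_{R-r}(x_0))}{\vol_M(B_{R+r}(x_0))}\ \ge\ \frac{\vol^d_\kappa(R-r)}{\vol^d_\kappa(R+r)}. \]

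Next I would rewrite the target inequality into this form. Abbreviating $V:=\vol_M(B_{R+r}(x_0))>0$ and $W:=\vol^d_\kappa(R+r)>0$, the claim
\[ \frac{V-\vol_M(B_{R-r}(x_0))}{2r}\ \le\ \frac{W-\vol^d_\kappa(R-r)}{2r}\cdot\frac{V}{W} \]
is obtained by multiplying both sides by $2r/V>0$ and rearranging, which turns it into $\vol^d_\kappa(R-r)/W \le \vol_M(B_{R-r}(x_0))/V$; this is exactly the Bishop--Gromov inequality above. Unwinding the abbreviations gives precisely $P_M(R;r)\le \frac{P^d_\kappa(R;r)}{\vol^d_\kappa(R+r)}\vol_M(B_{R+r}(x_0))$.

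For the auxiliary identities and inequality concerning $P^d_\kappa(R;r)$, I would differentiate the definition $\vol^d_\kappa(t)=d\omega_d\int_0^t\sin_\kappa^{d-1}(s)\,\di s$ to get $\frac{\di}{\di t}\vol^d_\kappa(t)=d\omega_d\sin_\kappa^{d-1}(t)$, so by the fundamental theorem of calculus $\vol^d_\kappa(R+r)-\vol^d_\kappa(R-r)=d\omega_d\int_{R-r}^{R+r}\sin_\kappa^{d-1}(t)\,\di t$, which gives the stated integral form of $P^d_\kappa(R;r)$ after dividing by $2r$. Finally, for $\kappa\le 0$ the function $\sin_\kappa$ (equal to $t$, resp. $\frac{1}{\sqrt{-\kappa}}\sinh(\sqrt{-\kappa}\,t)$) is nonnegative and nondecreasing on $[0,\infty)$, hence $\sin_\kappa^{d-1}(t)\le\sin_\kappa^{d-1}(R+r)$ for $t\in[R-r,R+r]$; averaging over this interval of length $2r$ yields $P^d_\kappa(R;r)\le d\omega_d\sin_\kappa^{d-1}(R+r)$. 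There is no genuine obstacle in this lemma: the only points to verify are that $V,W>0$ so that dividing by them preserves the inequalities, and that $D_\kappa=\infty$ for $\kappa\le 0$ so that the Bishop--Gromov comparison is available up to radius $R+r$ — both are immediate.
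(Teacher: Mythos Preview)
Your proposal is correct and follows essentially the same approach as the paper: both apply Bishop--Gromov at $x_0$ to the pair of radii $R-r$ and $R+r$, with the only difference being that the paper manipulates $P_M(R;r)$ directly by inserting $\frac{\vol^d_\kappa(R\pm r)}{\vol^d_\kappa(R\pm r)}$ into each term, while you rearrange the target inequality algebraically into the Bishop--Gromov statement. You also spell out the integral identity and the monotonicity bound for $P^d_\kappa(R;r)$, which the paper simply records in the statement without proof.
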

\begin{proof}
By Bishop-Gromov, we have
\begin{align*} 
P_M(R; r) 
& = \frac{1}{2r} \left[ \frac{\vol_M(B_{R+r}(x_0))}{\vol^d_\kappa({R+r})} \vol^d_\kappa({R+r}) - \frac{\vol_M(B_{R-r}(x_0))}{\vol^d_\kappa({R-r})} V^d_\kappa({R-r}) \right]
\\ & \leq \frac{1}{2r} \left[ \frac{\vol_M(B_{R+r}(x_0))}{\vol^d_\kappa({R+r})} \vol^d_\kappa({R+r}) - \frac{\vol_M(B_{R+r}(x_0))}{\vol^d_\kappa({R+r})} \vol^d_\kappa({R-r}) \right]
\\ & = \frac{\vol_M(B_{R+r}(x_0))}{\vol^d_\kappa({R+r})} P^d_\kappa(R; r). \qedhere
\end{align*}
\end{proof}

We bound the ratio $\frac{P^d_\kappa(R; r)}{V^d_\kappa({R+r})}$ in Lemma \ref{lem:pkbd}, from which we can deduce the following:

\begin{lemma}\label{lem:suppkbd}
There exist constants $C_1, C_2 > 0$ such that, for each $\kappa \leq 0$ and $R \geq 2 r_0 > 0$,
\[ \sup_{0 < r \leq r_0} \frac{P^d_\kappa(R; r)}{\vol^d_\kappa({R+r})} \leq \frac{C_1 + C_2 \sqrt{-\kappa}(R+r_0)}{R}. \]
\end{lemma}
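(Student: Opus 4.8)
The plan is to bound the ratio $P^d_\kappa(R;r)/\vol^d_\kappa(R+r)$ pointwise by a function of the single scale-invariant variable $\sqrt{-\kappa}\,(R+r)$, and then pass to the supremum over $r\in(0,r_0]$ with one elementary hyperbolic inequality. Writing $S:=R+r$ for brevity, the first step uses the estimate $P^d_\kappa(R;r)\le d\omega_d\sin_\kappa^{d-1}(S)$ recorded in Lemma~\ref{lem:perimbd} together with $\vol^d_\kappa(S)=d\omega_d\int_0^S\sin_\kappa^{d-1}(t)\,\di t$, which yields
\[ \frac{P^d_\kappa(R;r)}{\vol^d_\kappa(R+r)}\le\frac{\sin_\kappa^{d-1}(S)}{\int_0^S\sin_\kappa^{d-1}(t)\,\di t}. \]
For $\kappa<0$, setting $a:=\sqrt{-\kappa}$ and substituting $\sin_\kappa(t)=a^{-1}\sinh(at)$ (and rescaling the inner integral) turns the right-hand side into $a\,g(aS)$, where $g(x):=\sinh^{d-1}(x)\big/\int_0^x\sinh^{d-1}(u)\,\di u$; this pointwise reduction is the content of Lemma~\ref{lem:pkbd}. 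The degenerate case $\kappa=0$ is the limit $a\to0$, where $\sin_0(t)=t$ and the ratio is simply $\le d/S$.

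The second step is the estimate $g(x)\le d\coth x$ for $x>0$. Since $\cosh$ is nondecreasing,
\[ \tfrac1d\sinh^d(x)=\int_0^x\sinh^{d-1}(u)\cosh(u)\,\di u\le\cosh(x)\int_0^x\sinh^{d-1}(u)\,\di u, \]
so $\int_0^x\sinh^{d-1}(u)\,\di u\ge\sinh^d(x)/(d\cosh x)$, which rearranges to $g(x)\le d\cosh(x)/\sinh(x)=d\coth x$. I would then invoke $\coth x=1+2/(e^{2x}-1)\le1+1/x$ (equivalent to $1+2x\le e^{2x}$), i.e. $x\coth x\le1+x$, to obtain
\[ a\,g(aS)\le a\,d\coth(aS)=\frac{d}{S}\,(aS)\coth(aS)\le\frac{d}{S}\,(1+aS)=\frac{d}{S}+da. \]
Since $0<r\le r_0\le R/2<R$ forces $S>R$, this gives $d/S+da<d/R+d\sqrt{-\kappa}$ uniformly in $r$; bounding $d\sqrt{-\kappa}=d\sqrt{-\kappa}\,R/R\le d\sqrt{-\kappa}\,(R+r_0)/R$ and taking the supremum over $r\in(0,r_0]$ yields the claimed inequality with $C_1=C_2=d$, the $\kappa=0$ case being covered by the same constants via $d/S\le d/R$ from the first step.

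The main obstacle — and the only point genuinely requiring care — is making the bound \emph{uniform in $\kappa\le0$} while keeping the dependence on $R$ linear (not exponential) in $\sqrt{-\kappa}\,R$. This is why I route the argument through the scale-invariant quantity $a\,g(aS)$ and use the sharp-enough inequality $x\coth x\le1+x$: a cruder approach that estimates $\sinh$ and $\cosh$ separately, or bounds the integral $\int_0^S\sin_\kappa^{d-1}$ from below too loosely, would introduce a factor like $e^{c\sqrt{-\kappa}R}$ and defeat the purpose of the lemma. Everything else (the antiderivative identity $\int_0^x\sinh^{d-1}(u)\cosh(u)\,\di u=\tfrac1d\sinh^d(x)$, the rescaling, and the final arithmetic) is routine.
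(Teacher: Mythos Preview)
Your argument is correct and in fact cleaner than the paper's. Both routes start from the same pointwise reduction
\[
\frac{P^d_\kappa(R;r)}{\vol^d_\kappa(R+r)}\le\frac{\sin_\kappa^{d-1}(S)}{\int_0^S\sin_\kappa^{d-1}(t)\,\di t},\qquad S=R+r,
\]
and then rescale to $\kappa=-1$, but the paper (in Lemma~\ref{lem:pkbd}) proceeds by a case split on the size of $R$: for small $R$ it linearizes $\sinh$ via the mean-value inequality $\sinh(R+r)\le(R+r)\cosh(3r_0)$, while for large $R$ it lower-bounds $\int_0^S\sinh^{d-1}$ by comparison with $\int e^{(d-1)t}$ through the monotonicity of $e^{-t}\sinh t$; the two regimes are then patched together and the constants are left implicit. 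Your antiderivative trick $\int_0^x\sinh^{d-1}(u)\cosh(u)\,\di u=\tfrac1d\sinh^d(x)$ combined with $x\coth x\le 1+x$ handles all $x>0$ at once, eliminates the case analysis, and yields the explicit values $C_1=C_2=d$. The only cosmetic point is that the phrase ``this pointwise reduction is the content of Lemma~\ref{lem:pkbd}'' slightly overstates what that lemma asserts (it gives the final inequality, not the rescaling step), but this does not affect the mathematics.
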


In particular, as $R \to \infty$, the right-hand side will approach a finite positive value for $\kappa < 0$ and will converge to $0$ for $\kappa = 0$. We note also that for $\kappa = 0$, the constant $C_1$ can be taken to equal $d$.

Putting the above bounds together, we obtain

\begin{prop}\label{prop:ricgeqk}
Let $M$ be a $d$-dimensional complete Riemannian manifold with $\mathrm{Ric} \geq (d-1) \kappa g$, $\kappa \leq 0$. 

Let $x_0 \in M$, $R \geq 2 r_0 > 0$. Then for any $N \in \N$ such that $r_N(\partial B_R(x_0)) \leq 2 r_0$,
\[ N r_N(\partial B_R(x_0))^{d-1} \leq C_{d-1}(\partial B_R(x_0); 2r_0) \leq C \frac{C_1 + C_2 \sqrt{-\kappa}(R+r_0)}{R} \vol^d_\kappa({2R+r_0}), \]
where $C, C_1, C_2 > 0$ are constants independent of $x_0$, $R$ and $r_0$.

In particular, taking $r_0 = R/2$, we have that
\[ N r_N(\partial B_R(x_0))^{d-1} \leq C \frac{C_1 + \frac{3 C_2}{2} \sqrt{-\kappa} R}{R} \vol^d_\kappa\left(\frac{5R}{2}\right), \quad \textup{for all } N \in \N. \]
\end{prop}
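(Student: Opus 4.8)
The plan is to feed the Bishop--Gromov comparison estimates of Lemmas \ref{lem:lamdbd}, \ref{lem:perimbd} and \ref{lem:suppkbd} into the general volumetric bound \eqref{eq:covmfdub}, choosing the auxiliary parameters so that the \emph{a priori} unknown volume $\vol_M(B_{R+r_0}(x_0))$ of a ball in $M$ drops out of the final estimate. The starting point is \eqref{eq:covmfdub} itself (applied with $m = d-1$ and $\nu = \vol_M$), which already gives
\[ N\, r_N(\partial B_R(x_0))^{d-1} \le C_{d-1}(\partial B_R(x_0); 2r_0) \le \vartheta_d(\bar B_R(x_0); r_0)^{-1} \sup_{0 < r \le r_0} P_M(R; r) \]
for every $N \in \N$ with $r_N(\partial B_R(x_0)) \le 2r_0$; so it suffices to bound the product on the right-hand side.

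First I would bound $\vartheta_d(\bar B_R(x_0); r_0)^{-1}$ using Lemma \ref{lem:lamdbd} with the choice $r_1 = R$ (admissible since $r_1 \ge 0$ and $R \ge r_0 > 0$), which yields $\vartheta_d(\bar B_R(x_0); r_0) \ge \omega_d\, \vol_M(B_{R+r_0}(x_0))\, \vol^d_\kappa(2R+r_0)^{-1}$, hence $\vartheta_d(\bar B_R(x_0); r_0)^{-1} \le \omega_d^{-1}\, \vol^d_\kappa(2R+r_0)\, \vol_M(B_{R+r_0}(x_0))^{-1}$. Then for the perimeter factor, Lemma \ref{lem:perimbd} together with the monotonicity $\vol_M(B_{R+r}(x_0)) \le \vol_M(B_{R+r_0}(x_0))$ for $0 < r \le r_0$ gives
\[ \sup_{0<r\le r_0} P_M(R; r) \le \vol_M(B_{R+r_0}(x_0)) \sup_{0<r\le r_0} \frac{P^d_\kappa(R; r)}{\vol^d_\kappa(R+r)} \le \vol_M(B_{R+r_0}(x_0))\, \frac{C_1 + C_2 \sqrt{-\kappa}\,(R+r_0)}{R}, \]
the last inequality being Lemma \ref{lem:suppkbd}, whose hypothesis $R \ge 2r_0 > 0$ is exactly the one assumed. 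Multiplying the two bounds, the factors $\vol_M(B_{R+r_0}(x_0))$ cancel, leaving the asserted inequality with $C = \omega_d^{-1}$. For the ``in particular'' statement one simply sets $r_0 = R/2$: then $2r_0 = R$ and $r_N(\partial B_R(x_0)) \le r_1(\partial B_R(x_0)) \le R$ for every $N \ge 1$ (cover the sphere by a single ball centered at $x_0$), so the estimate applies to all $N$, and substituting $r_0 = R/2$ into $\vol^d_\kappa(2R+r_0)$ and $R+r_0$ produces $\vol^d_\kappa(5R/2)$ and $3R/2$ respectively.

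The only genuinely delicate point is the choice $r_1 = R$ in Lemma \ref{lem:lamdbd}: the volume comparison forces a lower bound on $\vartheta_d$ in terms of the volume of \emph{some} ball in $M$, and only by tuning that ball to have the same radius $R+r_0$ as the ball appearing in the Bishop--Gromov upper bound for $P_M$ (Lemma \ref{lem:perimbd}) does the manifold-dependent volume cancel, so that the bound ends up expressed purely through the model space $M^d_\kappa$. Everything else is bookkeeping of the absolute constants $C_1, C_2, \omega_d$.
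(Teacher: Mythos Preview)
Your proof is correct and follows essentially the same route as the paper: apply the volumetric bound \eqref{eq:covmfdub}, estimate $\vartheta_d^{-1}$ via Lemma~\ref{lem:lamdbd} with the key choice $r_1 = R$, estimate $\sup P_M$ via Lemmas~\ref{lem:perimbd} and~\ref{lem:suppkbd}, and let the factor $\vol_M(B_{R+r_0}(x_0))$ cancel. The only cosmetic difference is that the paper's own proof invokes Proposition~\ref{prop:supregbd} directly and so carries an explicit $2^d$ in front (giving $C = 2^d\omega_d^{-1}$ rather than your $C = \omega_d^{-1}$), which is immaterial since the statement only asks for some constant $C$.
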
 
\begin{proof}
Let $N \in \N$ such that $r_N(\partial B_R(x_0)) \leq 2 r_0$. 
Applying Proposition \ref{prop:supregbd} with $\vartheta = \vartheta_d(\partial B_R(x_0); r_0)$, we have
\[ N r_N(\partial B_R(x_0))^{d-1} \leq C_{d-1}(\partial B_R(x_0); 2r_0) \leq 2^d \vartheta^{-1} \sup_{0 < r \leq r_0} P_M(R; r). \]
By Lemmas \ref{lem:perimbd} and \ref{lem:suppkbd}, we have
\begin{align*}
\sup_{0 < r \leq r_0} P_M(R; r) \leq \sup_{0 < r \leq r_0} \frac{P^d_\kappa(R; r)}{\vol^d_\kappa({R+r})} \vol_M(B_{R+r}(x_0)) \leq \vol_M(B_{R+r_0}(x_0)) \frac{C_1 + C_2 \sqrt{-\kappa}(R+r_0)}{R}.
\end{align*}
Applying Lemma \ref{lem:lamdbd} with the choice $r_1 = R$ also yields
\[ \vartheta^{-1} \leq \omega_d^{-1} \frac{\vol^d_\kappa({2R+r_0})}{\vol_M(B_{R+r_0}(x_0))}. \]
We therefore have
\[ N r_N(\partial B_R(x_0))^{d-1} \leq C_{d-1}(\partial B_R(x_0); 2r_0) \leq 2^d \omega_d^{-1} \vol^d_\kappa({2R+r_0}) \frac{C_1 + C_2 \sqrt{-\kappa}(R+r_0)}{R}. \qedhere \]
\end{proof}

In particular, we deduce that every complete connected Riemannian manifold with nonnegative Ricci curvature has $O(R)$ covering growth:

\begin{corol}\label{cor:nonnegric}
Let $M$ be a $d$-dimensional complete connected Riemannian manifold with nonnegative Ricci curvature. 
Then $M$ has uniform $O(R)$ covering growth. 

More specifically, for each $x_0 \in M$, $N \in \N$ and $R > 0$, the following bound holds:
\[ N^{\frac{1}{d-1}} r_N(\partial B_R(x_0)) \leq (5^d d)^{\frac{1}{d-1}} R. \]
\end{corol}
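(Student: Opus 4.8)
The plan is to read off the statement as the special case $\kappa = 0$ of Proposition \ref{prop:ricgeqk}. Nonnegative Ricci curvature is exactly the hypothesis $\mathrm{Ric} \geq (d-1)\kappa g$ with $\kappa = 0$, and this value is admissible ($\kappa \leq 0$) in Proposition \ref{prop:ricgeqk}. Fix $x_0 \in M$ and $R > 0$, and take $r_0 = R/2$, so that the unconditional (``for all $N \in \N$'') form of that proposition applies: the side condition $r_N(\partial B_R(x_0)) \leq 2 r_0 = R$ holds automatically, since for every $r > R$ the single ball $B_r(x_0)$ already covers $\partial B_R(x_0)$, whence $r_N(\partial B_R(x_0)) \leq R$ for all $N \geq 1$.

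It then remains to substitute the $\kappa = 0$ data. The model volume is explicit, $\vol^d_0(r) = d\omega_d \int_0^r t^{d-1}\di t = \omega_d r^d$, so with $r_0 = R/2$ we have $\vol^d_0(2R+r_0) = \omega_d\bigl(\tfrac{5R}{2}\bigr)^d$. The curvature term $C_2\sqrt{-\kappa}(R+r_0)$ vanishes, and by the remark following Lemma \ref{lem:suppkbd} the remaining constant $C_1$ can be taken equal to $d$ when $\kappa = 0$. Tracking the overall constant in Proposition \ref{prop:ricgeqk} (namely $2^d$ from passing between covering and packing numbers, together with the $\omega_d^{-1}$ produced by the Euclidean lower volume bound in Lemma \ref{lem:lamdbd}), the conclusion becomes
\[ N\, r_N(\partial B_R(x_0))^{d-1} \leq 2^d \omega_d^{-1} \cdot \omega_d \Bigl(\tfrac{5R}{2}\Bigr)^d \cdot \frac{d}{R} = 5^d d\, R^{d-1}. \]
Taking $(d-1)$th roots gives $N^{1/(d-1)} r_N(\partial B_R(x_0)) \leq (5^d d)^{1/(d-1)} R$. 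Since the constant is independent of $x_0$, $N$ and $R$, and since this is precisely the estimate \eqref{eq:covgrow} with $f(R) = (5^d d)^{1/(d-1)} R$ upon setting $N = k^{d-1}$, the manifold has uniform $O(R)$ covering growth around every point.

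The substantive input is thus entirely contained in Proposition \ref{prop:ricgeqk} and, upstream, in the Bishop-Gromov comparison theorem; the corollary itself is pure bookkeeping. The only two points deserving a second look are: (i) checking that the sharpened value $C_1 = d$ for $\kappa = 0$ genuinely propagates through Lemma \ref{lem:pkbd} and Lemma \ref{lem:suppkbd} --- which it does, because for $\kappa = 0$ the ratio $P^d_0(R;r)/\vol^d_0(R+r)$ is computed exactly and equals $\tfrac{d}{R}$ up to lower-order corrections absorbed by the choice $r_0 = R/2$ --- and (ii) confirming the trivial bound $r_N(\partial B_R(x_0)) \leq R$ so that the $N$-unconditional form of the proposition may be invoked. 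With these in hand, the displayed arithmetic finishes the proof.
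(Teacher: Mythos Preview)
Your proof is correct and follows essentially the same route as the paper: both apply Proposition \ref{prop:ricgeqk} with $\kappa = 0$ and $r_0 = R/2$, substitute $\vol^d_0(5R/2) = \omega_d(5R/2)^d$, use $C_1 = d$ from the remark after Lemma \ref{lem:suppkbd}, and compute $2^d\omega_d^{-1}\cdot\omega_d(5R/2)^d\cdot d/R = 5^d d\,R^{d-1}$. Your additional remarks justifying the side condition $r_N(\partial B_R(x_0)) \leq R$ and the propagation of $C_1 = d$ are accurate and make the argument more self-contained than the paper's terse version.
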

\begin{proof}
Let $x_0 \in M$, $R > 0$.
Then applying Proposition \ref{prop:ricgeqk} with $\kappa = 0$ and $r_0 := R/2 \geq r_1(\partial B_R(x_0))/2$, for each $N \in \N$, we have
\begin{align*}
N r_N(\partial B_R(x_0))^{d-1} 
& \leq 2^d \omega_d^{-1} \vol^d_0(B_{5R/2}) \sup_{0 < r \leq R/2} \frac{d}{R+r} 
\\ & = 2^d \omega_d^{-1} \left( \omega_d (5R/2)^d \right) \frac{d}{R} = 5^d d R^{d-1}. \qedhere
\end{align*}
\end{proof}

The constant on the right-hand side can be made more precise when $\vartheta_d$ admits a universal lower bound, in which case Lemma \ref{lem:lamdbd} need not be applied. This can be thought of as an assumption of \emph{at least Euclidean volume growth} for balls on $M$.

\begin{prop}\label{prop:ricgeqklam}
Let $M$ be a $d$-dimensional complete Riemannian manifold with $\mathrm{Ric} \geq (d-1) \kappa g$, $\kappa \leq 0$. 
Assume moreover that there exists $\vartheta > 0$ such that $\vol_M(B_r(x)) \geq \vartheta r^d$ for all $r > 0$.

Let $x_0 \in M$, $R \geq 2 r_0 > 0$. Then for any $N \in \N$ such that $r_N(\partial B_R(x_0)) \leq 2 r_0$,
\[ N r_N(\partial B_R(x_0))^{d-1} \leq C_{d-1}(\partial B_R(x_0); 2r_0) \leq 2^d \vartheta^{-1} \frac{C_1 + C_2 \sqrt{-\kappa}(R+r_0)}{R} \vol_M(B_{R+r_0}(x_0)), \]
where $C_1, C_2 > 0$ are as in Lemma \ref{lem:pkbd}.
\end{prop}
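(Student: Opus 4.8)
The plan is to follow exactly the strategy used in the proof of Proposition \ref{prop:ricgeqk}, the only change being that the assumed uniform volume lower bound now plays the role that Lemma \ref{lem:lamdbd} played there. So the argument is a direct specialization of machinery already in place.

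First I would apply Proposition \ref{prop:supregbd} to $A = \partial B_R(x_0)$ with $m = d-1$ and $\nu = \vol_M$. Since $(\partial B_R(x_0))^r \subseteq B_{R+r}(x_0) \setminus B_{R-r}(x_0)$, we have $\vol_M((\partial B_R(x_0))^r) \leq 2r\,P_M(R;r)$, so that, writing $\vartheta = \vartheta_d(\bar B_R(x_0); r_0)$, for any $N \in \N$ with $r_N(\partial B_R(x_0)) \leq 2r_0$,
\[ N r_N(\partial B_R(x_0))^{d-1} \leq C_{d-1}(\partial B_R(x_0); 2r_0) \leq 2^d \vartheta^{-1} \sup_{0 < r \leq r_0} P_M(R;r). \]
The hypothesis that $\vol_M(B_r(x)) \geq \vartheta r^d$ for all $x \in M$ and $r>0$ gives directly (abusing notation so that the hypothesis constant and the density quantity carry the same name) $\vartheta_d(\bar B_R(x_0); r_0) = \inf_{x \in \bar B_R(x_0),\, 0 < r \leq r_0} r^{-d}\vol_M(B_r(x)) \geq \vartheta$; this is where the argument diverges from Proposition \ref{prop:ricgeqk}, since no volume comparison is needed to bound the density from below.

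Next I would bound $\sup_{0 < r \leq r_0} P_M(R;r)$. By Lemma \ref{lem:perimbd}, $P_M(R;r) \leq \frac{P^d_\kappa(R;r)}{\vol^d_\kappa(R+r)}\vol_M(B_{R+r}(x_0))$, and since $r \mapsto \vol_M(B_{R+r}(x_0))$ is nondecreasing, for $r \leq r_0$ the right-hand side is at most $\frac{P^d_\kappa(R;r)}{\vol^d_\kappa(R+r)}\vol_M(B_{R+r_0}(x_0))$. Lemma \ref{lem:suppkbd} (applicable precisely because $R \geq 2r_0$) gives $\sup_{0 < r \leq r_0}\frac{P^d_\kappa(R;r)}{\vol^d_\kappa(R+r)} \leq \frac{C_1 + C_2\sqrt{-\kappa}(R+r_0)}{R}$, so
\[ \sup_{0 < r \leq r_0} P_M(R;r) \leq \frac{C_1 + C_2\sqrt{-\kappa}(R+r_0)}{R}\,\vol_M(B_{R+r_0}(x_0)). \]
Substituting this and $\vartheta^{-1} \leq$ (hypothesis) $\vartheta^{-1}$ into the first display yields the stated bound.

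I do not expect a genuine obstacle: the claim is a specialization of Proposition \ref{prop:supregbd} together with Lemmas \ref{lem:perimbd} and \ref{lem:suppkbd}. The only points demanding care are the notational overlap between the hypothesis constant $\vartheta$ and the density quantity $\vartheta_d$, the bookkeeping on the range of $r$ (checking that $R \geq 2r_0$ is exactly the hypothesis Lemma \ref{lem:suppkbd} requires), and pulling the factor $\vol_M(B_{R+r}(x_0))$ out of the supremum over $r$ via monotonicity of $r \mapsto \vol_M(B_{R+r}(x_0))$.
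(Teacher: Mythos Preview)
Your proposal is correct and matches the paper's intended argument exactly. The paper does not write out a separate proof for this proposition; it simply remarks before the statement that ``Lemma \ref{lem:lamdbd} need not be applied'' when $\vartheta_d$ admits a universal lower bound, and your write-up carries this out verbatim: repeat the proof of Proposition \ref{prop:ricgeqk}, replacing the Bishop--Gromov lower density estimate by the assumed bound $\vol_M(B_r(x)) \geq \vartheta r^d$.
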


\begin{corol}\label{cor:nonnegriclam}
Let $M$ be a $d$-dimensional complete Riemannian manifold with nonnegative Ricci curvature. 
Assume moreover that there exists $\vartheta > 0$ such that $\vol_M(B_r(x)) \geq \vartheta r^d$ for all $r > 0$.

Then for each $x_0 \in M$, $N \in \N$ and $R > 0$, the following bound holds:
\[ N^{\frac{1}{d-1}} r_N(\partial B_R(x_0)) \leq \left( \frac{3^d d \omega_d}{\vartheta} \right)^{\frac{1}{d-1}} R. \]
\end{corol}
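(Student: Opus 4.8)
The plan is to apply Proposition~\ref{prop:ricgeqklam} with $\kappa = 0$ and $r_0 = R/2$, and then to use the Bishop--Gromov theorem one more time — now to bound the volume $\vol_M(B_{3R/2}(x_0))$ appearing on the right-hand side from above by $\omega_d (3R/2)^d$, since under $\mathrm{Ric} \geq 0$ the ratio $r \mapsto \vol_M(B_r(x_0))/(\omega_d r^d)$ is nonincreasing with limit $1$ as $r \to 0^+$. Everything then reduces to bookkeeping of constants.

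In detail: first note that $\partial B_R(x_0) \subseteq B_r(x_0)$ for every $r > R$, so $r_1(\partial B_R(x_0)) \leq R$, and since $N \mapsto r_N(\partial B_R(x_0))$ is nonincreasing we get $r_N(\partial B_R(x_0)) \leq R = 2 r_0$ for all $N \in \N$; thus the hypothesis $r_N(\partial B_R(x_0)) \leq 2 r_0$ of Proposition~\ref{prop:ricgeqklam} is satisfied for every $N$. Applying that proposition with $\kappa = 0$, and recalling from the remark following Lemma~\ref{lem:suppkbd} that $C_1$ may be taken equal to $d$ in the flat case (while the term $C_2 \sqrt{-\kappa}(R+r_0)$ vanishes), gives
\[ N\, r_N(\partial B_R(x_0))^{d-1} \leq 2^d \vartheta^{-1} \frac{d}{R}\, \vol_M(B_{3R/2}(x_0)). \]
Inserting the Bishop--Gromov upper bound $\vol_M(B_{3R/2}(x_0)) \leq \omega_d (3R/2)^d$ and simplifying,
\[ N\, r_N(\partial B_R(x_0))^{d-1} \leq 2^d \vartheta^{-1} \frac{d}{R}\, \omega_d \left(\frac{3R}{2}\right)^d = \frac{3^d d\, \omega_d}{\vartheta}\, R^{d-1}, \]
and taking $(d-1)$th roots yields the stated inequality.

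I do not expect a genuine obstacle: this is a direct specialization of Proposition~\ref{prop:ricgeqklam}. The only points requiring care are verifying that the covering-radius hypothesis holds for \emph{all} $N$ (so the conclusion is uniform rather than merely asymptotic), and tracking the constants — in particular using the sharp value $C_1 = d$ for $\kappa = 0$ together with the sharp Bishop--Gromov estimate $\vol_M(B_r(x_0)) \leq \omega_d r^d$, which is precisely what makes the final constant equal to $3^d d\, \omega_d / \vartheta$.
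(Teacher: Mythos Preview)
Your proof is correct and follows essentially the same approach as the paper: set $r_0 = R/2$, apply Proposition~\ref{prop:ricgeqklam} with $\kappa = 0$ and $C_1 = d$, then bound $\vol_M(B_{3R/2}(x_0)) \leq \omega_d (3R/2)^d$ via Bishop--Gromov. Your write-up is in fact slightly more careful in justifying $r_N(\partial B_R(x_0)) \leq R$ for all $N$.
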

\begin{proof}
Set $r_0 := R/2$. Then for each $N \in \N$ we have $r_N(\partial B_R(x_0)) \leq R = 2 r_0$, thus
\[ N r_N(\partial B_R(x_0))^{d-1} \leq C_{d-1}(\partial B_R(x_0); 2r_0) \leq 2^d \vartheta^{-1} \frac{d}{R} \vol_M(B_{R+r_0}(x_0)). \]
By the assumption of nonnegative Ricci curvature, $\vol_M(B_{R+r_0}(x_0)) = \vol_M(B_{3R/2}(x_0)) \leq \omega_d (3R/2)^d$, thus
\[ N r_N(\partial B_R(x_0))^{d-1} \leq 3^d \vartheta^{-1} d \omega_d R^{d-1}. \qedhere \]
\end{proof}

The constants on the right-hand side may be reduced further if $N$ is taken to be larger, hence $r_0$ is taken to be close to $0$, but the bounds given above are restricted by the fact that they hold for all $N \in \N$.

For example, on $M = \R^d$, $\partial B_R(0) = \bS^{d-1}_R$ intersects the axes at $2^d$ points, which are separated by at least a distance of $\sqrt{2} R$. This implies that
\[ N(\partial B_R(0); \sqrt{2} R) \geq P(\partial B_R(0); \sqrt{2} R) \geq 2^d, \]
thus for $N = 2^d - 1$, $r_N(\partial B_R(0)) \geq \sqrt{2} R$ and
\[ N r_N(\partial B_R(0))^{d-1} \geq (2^d-1) 2^{\frac{d-1}{2}} R^{d-1}. \]

\subsection{Geometric group actions}\label{sect:cocompact}

In order to further demonstrate the applicability of the covering growth upper bound, we consider Riemannian manifolds subject to the \emph{geometric action} of a discrete group of isometries. 
In particular, when the group of isometries is of polynomial growth, as in the case of $\R^d$ equipped with a periodic metric, we show that a polynomial moment condition is sufficient even though the lower bound on the Ricci curvature might be arbitrarily negative. For reference on geometric group theory, we refer to Bridson and Haefliger \cite{Bridson}.

\begin{defin}
A group action $\Gamma \times X \to X$ of a group $\Gamma$ on a locally compact metric space $X$ is said to be a \emph{geometric} group action if
\begin{itemize}
\item each $\gamma \in \Gamma$ acts on $X$ by an isometry;
\item (\emph{proper discontinuity}) for each $K \subseteq X$ compact, the set $\{\gamma \in \Gamma \mid \gamma \cdot K \cap K \neq \varnothing \}$ is finite;
\item (\emph{cocompactness}) there exists $K \subseteq X$ compact such that $\Gamma \cdot K := \bigcup_{\gamma \in \Gamma} \gamma \cdot K = X$.
\end{itemize}
\end{defin}

\begin{figure}[H]
\centering
\includegraphics[scale=0.75]{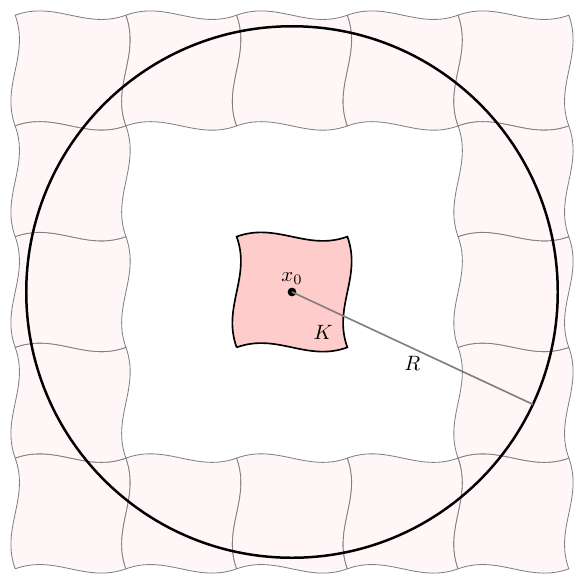}
\caption{A sphere covered by translates of a compact set $K$ along a geometric group action.}
\end{figure}

Let $M$ be a complete, connected $d$-dimensional Riemannian manifold, on which a discrete group $\Gamma$ acts geometrically. We seek to control the covering numbers of spheres in terms of the growth of $\Gamma$. In particular, we will prove the following statement:

\begin{prop}\label{prop:gcovbd}
Suppose $\Gamma$ acts geometrically on a complete $d$-dimensional Riemannian manifold $M$. Assume further that $\Gamma$ is of \emph{polynomial growth}: for any choice of generating set $\A \subset \Gamma$, there exist constants $C, \alpha > 0$ such that the \emph{growth function} $\beta_\A(k) \leq C k^\alpha$ for $k \in \N$ large. 

Then fixing $x_0 \in M$, for $R > 0$ sufficiently large and $N \in \N$ arbitrary, the following bound holds for the covering growth of spheres in $M$:
\[ N r_N(\partial B_R(x_0))^{d-1} \leq C_{d-1}(\partial B_R(x_0)) \leq C R^{\alpha+d-1}. \]
Thus $M$ has $O(R^{\frac{\alpha}{d-1}+1})$ covering growth around any point $x_0 \in M$.
\end{prop}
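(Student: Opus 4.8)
The plan is to extract two facts from the geometric action — local bounded geometry and global polynomial volume growth — and then feed them into the volumetric machinery of Sections~\ref{sect:volest}--\ref{sect:riclb}. First I would record the structural inputs. Fix a compact $K$ with $\Gamma\cdot K=M$; replacing $K$ by a concentric ball, we may take $K=\bar B_\rho(x_0)$. Since each $\gamma\in\Gamma$ is an isometry, the Ricci tensor is $\Gamma$-invariant, so the continuous function $p\mapsto\inf_{\|v\|=1}\mathrm{Ric}_p(v,v)$ attains its infimum over $M=\Gamma\cdot K$ already on $K$; hence $\mathrm{Ric}\ge(d-1)\kappa g$ on $M$ with $\kappa=-\kappa_0^2\le 0$. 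Likewise $p\mapsto\vol_M(B_{\rho_0}(p))$ is $\Gamma$-invariant, continuous and positive, hence bounded below on $M$; combined with Bishop--Gromov (comparing radii $\le\rho_0$ to the radius $\rho_0$) this gives constants $c_0,\rho_0>0$ with $\vol_M(B_r(x))\ge c_0 r^d$ for all $x\in M$ and $0<r\le\rho_0$, so that $\vartheta_d(\bar B_R(x_0);\rho_0)\ge c_0$ for every $R>0$. Second, by the Milnor--Švarc lemma $\Gamma$ is finitely generated and the orbit map $\gamma\mapsto\gamma x_0$ is a quasi-isometry from $(\Gamma,d_\A)$ to $M$; with the hypothesis $\beta_\A(k)\le Ck^\alpha$ this gives $\#\{\gamma:d(\gamma x_0,x_0)\le S\}\le C'S^\alpha$ for $S$ large. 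Covering $B_R(x_0)$ by the translates $\gamma K$ with $d(\gamma x_0,x_0)<R+\rho$ and using $\vol_M(\gamma K)=\vol_M(K)$ then yields $\vol_M(B_R(x_0))\le C_g R^\alpha$ for $R$ large, with a constant uniform in the centre (the relevant Milnor--Švarc constants being uniform over basepoints in $K$).

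With these in hand I would split according to the scale of the cover. For $N$ such that $r_N(\partial B_R(x_0))\le\rho_0$, apply \eqref{eq:covmfdub} with $r_0=\rho_0/2$, which bounds $N r_N(\partial B_R(x_0))^{d-1}$ by $\vartheta_d(\bar B_R(x_0);\rho_0/2)^{-1}\sup_{0<r\le\rho_0/2}P_M(R;r)$; here the first factor is $\le c_0^{-1}$, and by Lemmas~\ref{lem:perimbd} and \ref{lem:suppkbd} the second factor is $\le\frac{C_1+C_2\sqrt{-\kappa}\,(R+\rho_0/2)}{R}\vol_M(B_{R+\rho_0/2}(x_0))$, which for $R$ large is $\le\bar C\,\vol_M(B_{R+\rho_0/2}(x_0))\le C'R^\alpha$ by the volume bound. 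Thus $N r_N(\partial B_R(x_0))^{d-1}\le C'c_0^{-1}R^\alpha$ on this range; this is precisely the argument of Proposition~\ref{prop:ricgeqklam}, run with the lower density used only at the bounded scale $\rho_0$. For the complementary range $r_N(\partial B_R(x_0))>\rho_0$ one has $N<N(\partial B_R(x_0);\rho_0)$, and Lemma~\ref{lem:supregbd} at radius $\rho_0/2$ (using the density constant $c_0$ and $(\partial B_R(x_0))^{\rho_0/2}\subseteq B_{R+\rho_0/2}(x_0)$) together with the volume bound gives $N(\partial B_R(x_0);\rho_0)\le C_N R^\alpha$ for $R$ large; since trivially $r_N(\partial B_R(x_0))\le\diam(\partial B_R(x_0))\le 2R$, this yields $N r_N(\partial B_R(x_0))^{d-1}\le C_N R^\alpha(2R)^{d-1}$. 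Combining the two regimes gives $N r_N(\partial B_R(x_0))^{d-1}\le C R^{\alpha+d-1}$ for all $N\in\N$ and all large $R$; the constant is uniform in $x_0$ since every sphere is an isometric image of one centred in $K$, and taking $(d-1)$-th roots gives the asserted $O(R^{\alpha/(d-1)+1})$ covering growth.

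The main obstacle is that neither input suffices alone: using only the lower curvature bound in Bishop--Gromov reproduces Proposition~\ref{prop:ricgeqk}, whose bound involves $\vol^d_\kappa(2R+r_0)$ and is exponential in $R$ for $\kappa<0$. One must instead retain the actual volume $\vol_M(B_R)$ in place of its model-space majorant and control it via the Milnor--Švarc comparison with the polynomially growing group — so the key conceptual step is the passage, in the bounds of Section~\ref{sect:riclb}, from $\vol^d_\kappa$ to $\vol_M$. A secondary technical point is that the uniform lower volume density is only available at bounded radii (the manifold may have sub-Euclidean volume growth when $\alpha<d$), which is why the clean application of Proposition~\ref{prop:ricgeqklam} has to be split into the fine-scale and coarse-scale estimates; the coarse-scale estimate is where the factor $R^{d-1}$ — hence the summand $+1$ in the exponent $\alpha/(d-1)+1$ — enters.
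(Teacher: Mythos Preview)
Your proposal is correct and follows essentially the same architecture as the paper's proof: establish a uniform Ricci lower bound and a uniform small-scale volume lower bound from cocompactness, obtain polynomial volume growth from the \v{S}varc--Milnor lemma, then split into a fine-scale regime handled by \eqref{eq:covmfdub} together with Lemmas~\ref{lem:perimbd}--\ref{lem:suppkbd}, and a coarse-scale regime where the covering number at a fixed radius is multiplied by the trivial bound $r_N\le 2R$. This is exactly the decomposition of Lemmas~\ref{lem:rltr0} and \ref{lem:rgeqr0}.

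The only differences are implementation details, not ideas. For the lower density you invoke Bishop--Gromov at scales $\le\rho_0$; the paper instead uses Lemma~\ref{lem:limdens} (uniform convergence of $\vol_M(B_r(x))/(\omega_d r^d)\to1$ on compacta) and then transports by the isometries, which avoids the Ricci bound at that step but is equivalent. For the coarse-scale bound on $N(\partial B_R(x_0);\rho_0)$ you appeal to the volumetric Lemma~\ref{lem:supregbd} and the global volume estimate; the paper instead bounds this covering number directly by the group-theoretic count $N_\Gamma(\bar B_R\setminus\bar B_{R-r_0};\bar B_{r_0})$ via Lemma~\ref{lem:gcovnum}. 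Both routes yield the same $CR^\alpha$, and your closing remarks correctly identify that the coarse-scale regime is the sole source of the extra $R^{d-1}$ factor.
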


This bound more properly illustrates the coarse or large-scale nature of covering growth, as opposed to the growth of the exponential map. As with the material in the previous section, this estimate can also be extended directly to non-smooth metric measure spaces. Similar estimates can also be obtained for other bounds on the growth of the group, such as sub-exponential growth, using the same arguments given below.

\begin{example}[Periodic metrics on $\R^d$]\label{ex:permet}
Let $M = (\R^d, g)$, where $g$ is a $1$-periodic metric tensor on $\R^d$. That is, for any $a \in \Z^d$, $g_{x+a} = g_x$ (more rigorously, the metric tensor $g$ is preserved under pullback along each translation $x \mapsto x+a$).

Then the abelian group $\Z^d$ acts geometrically on $M$ by translations $x \mapsto x+a$, with $K = [0,1]^d$ compact in $M$ such that $\Z^d \cdot K = M$, and is of polynomial growth of order $d$. Thus any such manifold with a periodic metric tensor, however negatively curved they may be at points, will still have at most $O(R^{\frac{d}{d-1}+1})$ covering growth.

A simple example of a manifold of this type is a sinusoidal surface as illustrated in Figure \ref{fig:sinusoid}, given by the graph of a function of the form $f(x,y) = A \sin(\omega_x x + \phi_x) \sin(\omega_y y + \phi_y)$. Observe the saddle points occurring periodically on the surface. The larger the amplitude $A$ of the sinusoid, the more negatively curved the surface will be at the saddle points. Nevertheless, Proposition \ref{prop:gcovbd} guarantees polynomial covering growth independently of the curvature lower bound.
\end{example}

\begin{figure}[H]\label{fig:sinusoid}
\centering
\includegraphics[scale=0.75]{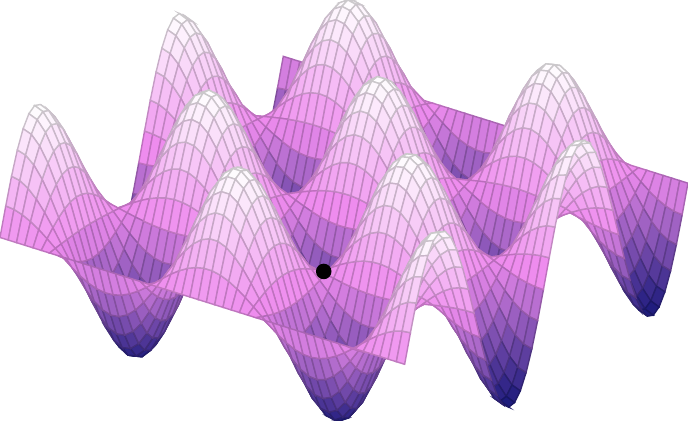}
\caption{
Sinusoidal surface in $\mathbb{R}^3$, with one saddle point highlighted in black.
}
\end{figure}

We now proceed with the proof of Proposition \ref{prop:gcovbd}.
We first recall some basic background on the growth of groups:

\begin{defin}
Let $\Gamma$ be a finitely generated group. Given a finite generating set $\A \subset \Gamma$, the \emph{word length} of $\gamma \in \Gamma$ is the length of the shortest representation of $\gamma$ as a product of elements of $\A \cup \A^{-1}$:
\[ |\gamma|_\A := \min \{ k \in \N \mid \exists \alpha_0 = e, \alpha_1, \ldots, \alpha_k \in \A \cup \A^{-1}, \gamma = \alpha_0 \alpha_1 \ldots \alpha_k \} \]
where we set $|e|_\A := 0$ for the identity element $e \in \Gamma$. The \emph{word metric} induced by $\A$ is given by
\[ d_\A(\gamma, \eta) := |\gamma^{-1} \eta|_\A, \quad \gamma, \eta \in \Gamma. \]
The \emph{growth function} of $\Gamma$ induced by $\A$ measures the cardinality of closed balls centered at $e$ with respect to the word metric:
\[ \beta_\A(k) := \#\{\gamma \in \Gamma \mid |\gamma|_\A \leq k\} = \#\left[ (\A \cup \A^{-1} \cup \{e\})^k\right]. \]
If there exist constants $C, \alpha, k_0 > 0$ such that $\beta_\A(k) \leq C k^\alpha$ for all $k \geq k_0$, we say that $\Gamma$ is of \emph{polynomial growth} of order $\alpha$.
\end{defin}

The word metric indeed defines a metric on $\Gamma$, and any two word metrics are bi-Lipschitz equivalent: given finite generating sets $\A, \A^\prime \subset \Gamma$, setting $\lambda := \max_{\alpha \in \A} |\alpha|_{\A^\prime}$, we have
\[ d_{\A^\prime}(\gamma,\eta) \leq \lambda d_\A(\gamma, \eta), \quad \textup{for all } \gamma, \eta \in \Gamma. \]
In particular, $|\gamma|_{\A^\prime} \leq \lambda |\gamma|_{\A}$ for all $\gamma \in \Gamma$, hence $\beta_\A(k) \leq \beta_{\A^\prime}(\lambda k)$ for all $k \in \N$. The property of polynomial growth is thus independent of the choice of generating set $\A$.

\begin{example}
The word metric on $\Z^d$ with respect to the canonical generators $\{e_i\}_{i=1}^d$ is exactly the metric induced by the $\ell^1$ norm. 
\end{example}

We also recall the definition of \emph{quasi-isometries} between metric spaces:

\begin{defin}
Let $X$, $Y$ be metric spaces. A (not necessarily continuous) map $\phi \colon X \to Y$ is called a \emph{$(\lambda, \eps)$-quasi-isometric embedding} for $\lambda \geq 1$ and $\eps \geq 0$ if the following inequalities hold for all $x, x^\prime \in X$:
\[ \lambda^{-1} d_X(x,x^\prime) - \eps \leq d_Y(\phi(x),\phi(x^\prime)) \leq \lambda d_X(x,x^\prime) + \eps. \]
If moreover $\sup_{y \in Y} d(y, \phi(X)) < \infty$, $\phi$ is called a \emph{$(\lambda,\eps)$-quasi-isometry} and $X$ and $Y$ are said to be \emph{quasi-isometric}.
\end{defin}

Quasi-isometries can be said to capture the coarse or large-scale geometry of metric spaces, as opposed to the local structure.
The following fundamental result provides a correspondence between the growth of groups and the metric spaces they act on:

\begin{prop}[Schwarz-Milnor Lemma]\label{prop:milsch}
Suppose a group $\Gamma$ acts geometrically on a locally compact length space $X$. Then $\Gamma$ is finitely generated, and for any choice of word metric $d_\A$ on $\Gamma$ and base point $x_0 \in X$, the map $\phi \colon \Gamma \to X$ given by $\phi(\gamma) := \gamma \cdot x_0$ is a quasi-isometry.
\end{prop}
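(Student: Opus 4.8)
The plan is to run the classical proof of the Schwarz--Milnor lemma in this metric setting. The one preliminary point is that $X$ must be shown to be \emph{proper}, i.e.\ that all of its closed balls are compact: local compactness provides each point of the compact set $K$ (with $\Gamma\cdot K = X$) with a compact closed neighbourhood, and covering $K$ by finitely many of these yields a single radius $r_0 > 0$ for which $\bar B_{r_0}(x)$ is compact for every $x \in K$; translating by $\Gamma$, which acts by isometries, extends this to all of $X$. A standard Hopf--Rinow-type induction then upgrades this uniform local compactness of a length space to the compactness of every closed ball. This is precisely what allows proper discontinuity of the action to be applied to closed metric balls in what follows.

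Next I would fix $R > 0$ large enough that $K \subseteq \bar B_R(x_0)$; then $\Gamma \cdot \bar B_R(x_0) = X$, and in fact every point of $X$ lies within distance $R$ of the orbit $\Gamma \cdot x_0$, since writing $y = \gamma \cdot z$ with $z \in K$ gives $d(y, \gamma\cdot x_0) = d(z, x_0) \le R$. Define the symmetric set $S := \{\gamma \in \Gamma \setminus \{e\} : d(x_0, \gamma\cdot x_0) \le 3R\}$; each $\gamma \in S$ satisfies $\gamma\cdot x_0 \in \gamma\cdot\bar B_{3R}(x_0) \cap \bar B_{3R}(x_0)$, so by properness and proper discontinuity $S$ is finite. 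The heart of the argument is a two-sided comparison between the word length $|\cdot|_S$ and the orbit distance. Given $\gamma \in \Gamma$ with $L := d(x_0, \gamma\cdot x_0)$, use the length-space hypothesis to pick a rectifiable path from $x_0$ to $\gamma\cdot x_0$ of length at most $L+1$, and subdivide it into $n := \lceil (L+1)/R \rceil$ consecutive arcs of length $\le R$, with endpoints $x_0 = p_0, p_1, \dots, p_n = \gamma\cdot x_0$; choose $\gamma_i \in \Gamma$ with $d(p_i, \gamma_i\cdot x_0) \le R$, taking $\gamma_0 = e$ and $\gamma_n = \gamma$. Then $d(x_0, \gamma_{i-1}^{-1}\gamma_i\cdot x_0) = d(\gamma_{i-1}\cdot x_0, \gamma_i\cdot x_0) \le 3R$, so each $\gamma_{i-1}^{-1}\gamma_i$ lies in $S \cup \{e\}$ and $\gamma = \prod_{i=1}^n \gamma_{i-1}^{-1}\gamma_i$ is a product of at most $n$ elements of $S$. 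Hence $S$ generates $\Gamma$ (so $\Gamma$ is finitely generated) and $|\gamma|_S \le n \le R^{-1} d(x_0, \gamma\cdot x_0) + (1 + R^{-1})$. Conversely, writing $\gamma = s_1 \cdots s_k$ with $s_i \in S$ and $k = |\gamma|_S$, telescoping along the orbit and using that each $s_i$ is an isometry gives $d(x_0, \gamma\cdot x_0) \le \sum_{i=1}^k d(x_0, s_i\cdot x_0) \le 3R\,|\gamma|_S$.

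Finally, I would apply the comparison to $\gamma^{-1}\eta$: since $d_X(\phi(\gamma), \phi(\eta)) = d(x_0, \gamma^{-1}\eta\cdot x_0)$ and $d_S(\gamma, \eta) = |\gamma^{-1}\eta|_S$, the two inequalities above become
\[ R\, d_S(\gamma, \eta) - (R+1) \le d_X(\phi(\gamma), \phi(\eta)) \le 3R\, d_S(\gamma, \eta), \]
which, after harmlessly enlarging $R$ so that $R \ge 1$, exhibits $\phi$ as a $(\lambda, \varepsilon)$-quasi-isometric embedding with $\lambda = 3R$ and $\varepsilon = R+1$. Moreover $\sup_{x \in X} d(x, \phi(\Gamma)) = \sup_{x \in X} d(x, \Gamma\cdot x_0) \le R < \infty$, so $\phi$ is in fact a quasi-isometry. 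For an arbitrary finite generating set $\A$, the word metrics $d_\A$ and $d_S$ on $\Gamma$ are bi-Lipschitz equivalent, as recalled before the statement, so $\mathrm{id}\colon (\Gamma, d_\A) \to (\Gamma, d_S)$ is a quasi-isometry and its composition with $\phi$ is again one; this gives the conclusion for every word metric. The only genuinely delicate point is the passage from local compactness to properness, which underpins both the finiteness of $S$ and the existence of the approximating elements $\gamma_i$; everything after that is routine manipulation of the triangle inequality.
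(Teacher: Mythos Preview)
Your argument is the standard proof of the Schwarz--Milnor lemma and is correct. The paper, however, does not give its own proof of this proposition: immediately after the statement it simply refers the reader to Bridson and Haefliger \cite[Prop.~8.19]{Bridson}. What you have written is essentially the argument found there (properness of $X$ via cocompactness plus Hopf--Rinow, finiteness of the generating set $S$ from proper discontinuity applied to a compact ball, the path-subdivision chain to bound $|\gamma|_S$ above, and the telescoping sum for the reverse inequality), so your proposal matches the cited source rather than diverging from it.
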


For a proof and further background, we refer to Bridson and Haefliger \cite[Prop. 8.19]{Bridson}. We use this result to control the sizes of balls and annuli around $x_0$:

\begin{notation*}
We denote by $N_\Gamma(A; B)$ the minimum size of a cover of a set $A \subseteq X$ by images of another set $B \subseteq X$ along the action of $\Gamma$:
\[ N_\Gamma(A; B) := \min\{ |\G| \mid \G \subseteq \Gamma, A \subseteq \G \cdot B \}. \]
\end{notation*}

\begin{lemma}\label{lem:gcovnum}
Suppose $\Gamma$ acts geometrically on a proper length space $X$. Fix $x_0 \in X$ and a generating set $\A \subset \Gamma$. Then there exist $\lambda \geq 1$, $\eps \geq 0$, $\delta > 0$ such that for any $R > 0$ and $r_0 > \delta$, 
\[ N_\Gamma( \bar B_R(x_0); \bar B_{r_0}(x_0) ) \leq \beta_\A\left( \lfloor \lambda (R + r_0 + \eps) \rfloor \right). \]
Moreover, for any $R_2 > R_1 \geq r_0 + \eps$, 
\[ N_\Gamma( \bar B_{R_2}(x_0) \setminus \bar B_{R_1}(x_0); \bar B_{r_0}(x_0) ) \leq \beta_\A\left( \lfloor \lambda (R_2 + r_0 + \eps) \rfloor \right) - \beta_\A\left( \lfloor \lambda^{-1} (R_1 - r_0 - \eps) \rfloor \right). \]
\end{lemma}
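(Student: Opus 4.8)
The strategy is to transfer covering statements from the word metric on $\Gamma$ to the metric on $X$ via the Schwarz-Milnor quasi-isometry $\phi(\gamma) = \gamma \cdot x_0$. First I would fix a generating set $\A$ and apply Proposition \ref{prop:milsch} to obtain $\lambda \geq 1$ and $\eps \geq 0$ such that $\lambda^{-1} d_\A(e, \gamma) - \eps \leq d(x_0, \gamma \cdot x_0) \leq \lambda d_\A(e,\gamma) + \eps$ for all $\gamma \in \Gamma$. Cocompactness gives a compact $K$ with $\Gamma \cdot K = X$; since $X$ is proper, $K \subseteq \bar B_\delta(x_0)$ for some $\delta > 0$, so for $r_0 > \delta$ we have $\bar B_{r_0}(x_0) \supseteq K$ and hence $\Gamma \cdot \bar B_{r_0}(x_0) = X$ as well. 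The key observation is then: if $\gamma \cdot \bar B_{r_0}(x_0)$ meets $\bar B_R(x_0)$, then $d(x_0, \gamma \cdot x_0) \leq R + r_0$ by the triangle inequality (an element of the translate is within $r_0$ of $\gamma\cdot x_0$ and within $R$ of $x_0$), which by the quasi-isometry inequality forces $d_\A(e,\gamma) \leq \lambda(R + r_0 + \eps)$, i.e. $|\gamma|_\A \leq \lfloor \lambda(R+r_0+\eps) \rfloor$ since word length is an integer.

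For the first inequality, I would therefore let $\G := \{ \gamma \in \Gamma \mid \gamma \cdot \bar B_{r_0}(x_0) \cap \bar B_R(x_0) \neq \varnothing \}$. This is precisely the set of translates needed to cover $\bar B_R(x_0)$: since $\Gamma \cdot \bar B_{r_0}(x_0) = X$, every point of $\bar B_R(x_0)$ lies in some $\gamma \cdot \bar B_{r_0}(x_0)$, and such a $\gamma$ necessarily belongs to $\G$. By the observation above, $\G \subseteq \{\gamma : |\gamma|_\A \leq \lfloor \lambda(R+r_0+\eps)\rfloor\}$, so $|\G| \leq \beta_\A(\lfloor \lambda(R+r_0+\eps)\rfloor)$, which yields $N_\Gamma(\bar B_R(x_0); \bar B_{r_0}(x_0)) \leq |\G| \leq \beta_\A(\lfloor\lambda(R+r_0+\eps)\rfloor)$.

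For the annulus bound, the covering set can be taken to be $\G' := \{\gamma \in \Gamma \mid \gamma\cdot\bar B_{r_0}(x_0) \cap (\bar B_{R_2}(x_0)\setminus\bar B_{R_1}(x_0)) \neq \varnothing\}$. As before $\G'$ covers the annulus and $\G' \subseteq \{\gamma : |\gamma|_\A \leq \lfloor\lambda(R_2+r_0+\eps)\rfloor\}$. The extra point is a lower bound: if $\gamma\cdot\bar B_{r_0}(x_0)$ meets the annulus, then some point at distance $> R_1$ from $x_0$ lies within $r_0$ of $\gamma\cdot x_0$, so $d(x_0,\gamma\cdot x_0) > R_1 - r_0$, and the other quasi-isometry inequality gives $d_\A(e,\gamma) \geq \lambda^{-1}(d(x_0,\gamma\cdot x_0) - \eps) > \lambda^{-1}(R_1 - r_0 - \eps)$, hence $|\gamma|_\A \geq \lfloor\lambda^{-1}(R_1-r_0-\eps)\rfloor + 1$ (using that the right side is nonnegative when $R_1 \geq r_0 + \eps$, so the floor is at least $0$ and a strict integer inequality bumps it up). Thus $\G'$ is contained in the spherical shell of word-radii strictly between $\lfloor\lambda^{-1}(R_1-r_0-\eps)\rfloor$ and $\lfloor\lambda(R_2+r_0+\eps)\rfloor$, so $|\G'| \leq \beta_\A(\lfloor\lambda(R_2+r_0+\eps)\rfloor) - \beta_\A(\lfloor\lambda^{-1}(R_1-r_0-\eps)\rfloor)$, giving the claimed inequality.

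The main obstacle is purely bookkeeping: getting the floors and the strict-versus-nonstrict inequalities to line up so that the subtracted term on the right is exactly $\beta_\A$ of the correct argument (rather than off by one), and making sure $X$ being a \emph{proper} length space — which follows here since $M$ is a complete, connected, finite-dimensional Riemannian manifold, hence proper by Hopf-Rinow — legitimately lets us replace the abstract compact cocompactness witness $K$ by a metric ball $\bar B_\delta(x_0)$. No genuinely hard estimate is involved; everything reduces to the triangle inequality plus the Schwarz-Milnor comparison.
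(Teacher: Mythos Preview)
Your proposal is correct and follows essentially the same approach as the paper: invoke the Schwarz--Milnor lemma to get the quasi-isometry constants, use the coboundedness/cocompactness to ensure that translates of $\bar B_{r_0}(x_0)$ cover $X$ once $r_0 > \delta$, and then bound $N_\Gamma$ by the cardinality of the set of $\gamma$ whose translate meets the target set. The only cosmetic differences are that the paper takes $\delta := \sup_{x\in X} d(x,\Gamma\cdot x_0)$ directly from the quasi-isometry rather than from a compact witness $K$, and for the annulus it phrases the lower bound contrapositively (showing $|\gamma|_\A \leq k_1 \Rightarrow \gamma\cdot B_0 \subseteq \bar B_{R_1}$) instead of your direct bound $|\gamma|_\A > \lambda^{-1}(R_1 - r_0 - \eps)$; the content is identical.
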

\begin{proof}
Let $\phi \colon \Gamma \to X$ be the quasi-isometry given by the Schwarz-Milnor lemma, with constants $\lambda \geq 1$ and $\eps \geq 0$. Set moreover $\delta := \sup_{x \in X} d(x, \phi(\Gamma)) < \infty$. 

Let $r_0 > \delta$.
Then the closed ball $B_0 = \bar B_{r_0}(x_0)$ is compact and its images $\gamma \cdot B_0 = \bar B_{r_0}(\gamma \cdot x_0)$ (noting that $\Gamma$ acts by isometries) cover $X$, since for any $x \in X$, there exists $\gamma \in \Gamma$ such that $d(x, \gamma \cdot x_0) < \frac{r_0}{\delta} d(x, \phi(\Gamma)) \leq r_0$. 

Consequently, for any (relatively) compact subset $A \subseteq X$, $N_\Gamma(A; B_0)$ is finite and we have the trivial bound
\[ N_\Gamma(A; B_0) \leq \#\{\gamma \in \Gamma \mid (\gamma \cdot B_0) \cap A \neq \emptyset\}. \]
We bound the right-hand side separately for $A = \bar B_R(x_0)$ and $A = \bar B_{R_2}(x_0) \setminus \bar B_{R_1}(x_0)$.

Let $R > 0$, and let $\gamma \in \Gamma$ such that there exists $x \in \bar B_R(x_0) \cap \gamma \cdot B_0$. Since $\phi$ is a $(\lambda,\eps)$-quasi-isometry,
\[ \lambda^{-1} d_\A(\gamma, e) - \eps \leq d(\gamma \cdot x_0, x_0) \leq d(\gamma \cdot x_0, x) + d(x, x_0) \leq r_0 + R. \]
Thus we must have
\[ |\gamma|_\A = d_\A(e,\gamma) \leq \lambda(R + r_0 + \eps). \]
Noting that $|\gamma|_\A$ is always integer-valued, setting $k := \lfloor \lambda(R + r_0 + \eps) \rfloor$ we then obtain
\[ \#\{\gamma \in \Gamma \mid (\gamma \cdot B_0) \cap \bar B_{R}(x_0) \neq \emptyset\} \leq \#\{\gamma \in \Gamma \mid |\gamma|_\A \leq k \} = \beta_\A(k). \]
This yields the first inequality.

For the second inequality, observe that
\begin{align}\label{eq:annulcard}
\#\{\gamma \in \Gamma \mid (\gamma \cdot B_0) \cap (\bar B_{R_2}(x_0) \setminus \bar B_{R_1}(x_0)) \neq \emptyset\}
\\ = \#\{\gamma \in \Gamma \mid (\gamma \cdot B_0) \cap \bar B_{R_2}(x_0) \neq \emptyset\}
  - \#\{\gamma \in \Gamma \mid (\gamma \cdot B_0) \subseteq \bar B_{R_1}(x_0)\}. 
\end{align}
The first term can be upper bounded as above. Now set $k_1 := \lfloor \lambda^{-1} (R_1 - r_0 - \eps) \rfloor$ and take $\gamma \in \Gamma$ such that $|\gamma|_\A \leq k_1$. Given any $x \in \gamma \cdot B_0 = \bar B_{r_0}(\gamma \cdot x_0)$, again by the quasi-isometricity of $\phi$,
\[ d(\gamma \cdot x_0, x_0) \leq \lambda |\gamma|_\A + \eps \leq R_1 - r_0, \]
and by the triangle inequality,
\[ d(x, x_0) \leq d(x, \gamma \cdot x_0) + d(\gamma \cdot x_0, x_0) \leq R_1. \]
Therefore $x \in \bar B_{R_1}(x_0)$. This shows that $|\gamma|_\A \leq k_1$ implies $\gamma \cdot B_0 \subseteq \bar B_{R_1}(x_0)$, in particular
\[ \#\{\gamma \in \Gamma \mid (\gamma \cdot B_0) \subseteq \bar B_{R_1}(x_0)\} \geq \#\{\gamma \in \Gamma \mid |\gamma|_\A \leq k_1 \} = \beta_\A(k_1). \]
Applying this to \eqref{eq:annulcard} yields the second inequality.
\end{proof}

We now apply this bound, in combination with \eqref{eq:covmfdub}, in order to control the covering growth of geodesic spheres on Riemannian manifolds. Henceforth we assume $M$ to be a $d$-dimensional complete Riemannian manifold on which a group $G$ acts geometrically. 

Fixing $x_0 \in M$ and $r_0 > \delta$, we bound $N(\partial B_R(x_0); r) r^{d-1}$ by considering the cases $r < r_0$ and $r \geq r_0$ separately.

For $r < r_0$, we again apply Proposition \ref{prop:supregbd}. The estimate given in Lemma \ref{lem:gcovnum} is too imprecise to bound the approximate perimeters of balls directly, so we will instead proceed as in the previous section by bounding $P_M(R; r) \leq C \vol_M(B_{R+r}(x_0))$.

\begin{lemma}\label{lem:glamd}
Fix $r_0 > 0$.
There exists $\lambda > 0$ dependent on $r_0$ such that
\[ \vol_M(B_r(x)) \geq \lambda r^d , \quad \textup{for all } x \in M, r \leq r_0. \]
Consequently, $\lambda_d(\bar B_R(x_0); r_0) \geq \lambda$ independently of $R > 0$.
\end{lemma}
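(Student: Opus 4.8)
The plan is to use cocompactness to reduce the claimed uniform volume lower bound on all of $M$ to a uniform lower bound over a single compact set, and then to obtain the latter from the local behaviour of the Riemannian volume form.

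\emph{Reduction via cocompactness.} Since $G$ acts geometrically, fix a compact $K \subseteq M$ with $G \cdot K = M$. Given $x \in M$ and $r > 0$, choose $\gamma \in G$ with $\gamma \cdot x \in K$; since $\gamma$ is an isometry, $\vol_M(B_r(x)) = \vol_M(B_r(\gamma \cdot x))$. It therefore suffices to produce $\lambda > 0$, depending on $K$ and $r_0$, with $\vol_M(B_r(y)) \geq \lambda r^d$ for all $y \in K$ and all $0 < r \leq r_0$.

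\emph{Uniform local bound on $K$.} This is the step where compactness of $K$ is used essentially, the point being that the constant must be uniform in the base point. The injectivity radius $\mathrm{inj}_M \colon M \to (0,\infty]$ is continuous and strictly positive, hence bounded below by some $i_0 > 0$ on the compact set $K$. Moreover, $M$ is proper by the Hopf--Rinow theorem, so the closed neighborhood $\{x \in M : d(x, K) \leq r_0\}$ is compact and the sectional curvature of $M$ is bounded above on it, say by $\kappa_0 \geq 0$. For $y \in K$ and $r < \min\{i_0, \pi/\sqrt{\kappa_0}\}$ the geodesic ball $B_r(y)$ is the diffeomorphic image of the Euclidean $r$-ball under normal coordinates centered at $y$, in which the volume density $\sqrt{\det g}$ is controlled by the curvature along radial geodesics; hence there exists $r_1 \in (0, r_0]$, depending only on $i_0$ and $\kappa_0$, such that $\sqrt{\det g} \geq \tfrac12$ throughout the Euclidean $r_1$-ball for every $y \in K$, which yields $\vol_M(B_r(y)) \geq \tfrac{\omega_d}{2}\, r^d$ for all $y \in K$ and $0 < r \leq r_1$. (Alternatively one may invoke G\"unther's volume comparison inequality $\vol_M(B_r(y)) \geq \vol^d_{\kappa_0}(r)$ together with the elementary estimate $\sin_{\kappa_0}(t) \geq \tfrac{2}{\pi} t$ valid for $0 \leq t \leq \tfrac{\pi}{2\sqrt{\kappa_0}}$.)

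\emph{Extension by monotonicity and conclusion.} For $r_1 < r \leq r_0$ and $y \in K$, monotonicity of $s \mapsto \vol_M(B_s(y))$ gives $\vol_M(B_r(y)) \geq \vol_M(B_{r_1}(y)) \geq \tfrac{\omega_d}{2} r_1^d \geq \tfrac{\omega_d}{2}(r_1/r_0)^d\, r^d$. Combining the two ranges and using the reduction above, $\vol_M(B_r(x)) \geq \lambda r^d$ for all $x \in M$ and $0 < r \leq r_0$, with $\lambda := \tfrac{\omega_d}{2}(r_1/r_0)^d > 0$, which depends only on $M$ (through $K$) and on $r_0$. Finally, for any $x_0 \in M$ and $R > 0$, every $x \in \bar B_R(x_0)$ and $0 < r \leq r_0$ satisfy $r^{-d}\vol_M(B_r(x)) \geq \lambda$, so $\vartheta_d(\bar B_R(x_0); r_0) \geq \lambda$ with $\lambda$ independent of $R$. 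The main obstacle is the uniform local bound of the second step — getting both the radius $r_1$ and the density constant to be uniform over $y \in K$, which is exactly what the lower bound on the injectivity radius and the curvature bound on a compact neighborhood provide; everything else is bookkeeping.
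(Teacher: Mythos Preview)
Your proof is correct and follows essentially the same architecture as the paper: reduce to a compact $K$ via cocompactness, obtain a uniform lower density bound on $K$, and transport it to all of $M$ by the isometric action. The only difference is the tool used for the local estimate on $K$: the paper invokes its Lemma on the uniform convergence $\vol_M(B_r(x))/(\omega_d r^d) \to 1$ on compacts (proved via bi-Lipschitz normal charts and the Lebesgue number lemma), whereas you use an injectivity-radius lower bound together with a sectional curvature upper bound and G\"unther's comparison; both yield $\vol_M(B_r(y)) \geq c\, r^d$ for small $r$ uniformly over $y \in K$, and your explicit monotonicity step to pass from $r \leq r_1$ to $r \leq r_0$ is a detail the paper leaves implicit.
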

\begin{proof}
Take $K \subseteq M$ compact such that $\Gamma \cdot K = M$. By Lemma \ref{lem:limdens}, there exists $\lambda > 0$ such that
\[ \vol_M(B_r(x)) \geq \lambda r^d , \quad \textup{for all } x \in K, r \leq r_0. \]

Now suppose $x \in M$ and $r \leq r_0$. Take $\gamma \in \Gamma$ such that $\gamma \cdot x \in K$, hence
\[ \vol_M(B_r(x)) = \vol_M(\gamma \cdot B_r(x)) = \vol_M(B_r(\gamma \cdot x)) \geq \lambda r^d \]
since $\Gamma$ acts on $M$ by isometries.
\end{proof}

\begin{lemma}\label{lem:gperim}
There exists a constant $C$ dependent on $M$, $x_0$ and $r_0$ such that, for any $0 < r \leq r_0 \leq R/2 < \infty$,
\[ P_M(R; r) \leq C \vol_M(B_{R+r_0}(x_0)). \]
\end{lemma}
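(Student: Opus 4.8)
The plan is to deduce Lemma~\ref{lem:gperim} from the lower‑bounded‑Ricci estimates already obtained in Section~\ref{sect:riclb}, exploiting the fact that a cocompact isometry action forces a \emph{global} lower Ricci bound. First I would fix a compact set $K \subseteq M$ with $G \cdot K = M$, provided by cocompactness. Since the Ricci tensor is a continuous $(0,2)$‑tensor field, the function $x \mapsto \inf_{v \in T_x M,\ \|v\|_x = 1} \mathrm{Ric}_x(v,v)$ is bounded below on the compact set $K$, say by $(d-1)\kappa_0$; put $\kappa := \min\{\kappa_0,0\} \le 0$. Because every $\gamma \in G$ acts by an isometry, $d\gamma_x \colon T_x M \to T_{\gamma \cdot x} M$ is a linear isometry pulling $\mathrm{Ric}$ and $g$ back to themselves, so for any $y = \gamma \cdot x$ with $x \in K$ one gets $\mathrm{Ric}_y \ge (d-1)\kappa\, g_y$. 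As $G \cdot K = M$, this holds for every $y \in M$, i.e. $\mathrm{Ric} \ge (d-1)\kappa g$ globally, with $\kappa$ depending only on $M$.

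With such a global bound $\mathrm{Ric} \ge (d-1)\kappa g$, $\kappa \le 0$, in hand, the estimate becomes an immediate consequence of results already proven. For $0 < r \le r_0 \le R/2 < \infty$, Lemma~\ref{lem:perimbd} gives $P_M(R;r) \le \frac{P^d_\kappa(R;r)}{\vol^d_\kappa(R+r)}\, \vol_M(B_{R+r}(x_0))$, and since $r \le r_0$ and $\rho \mapsto \vol_M(B_\rho(x_0))$ is nondecreasing, $\vol_M(B_{R+r}(x_0)) \le \vol_M(B_{R+r_0}(x_0))$. As $R \ge 2r_0$, Lemma~\ref{lem:suppkbd} bounds $\sup_{0 < r \le r_0} \frac{P^d_\kappa(R;r)}{\vol^d_\kappa(R+r)} \le \frac{C_1 + C_2\sqrt{-\kappa}(R+r_0)}{R}$, and using $R + r_0 \le \frac{3}{2}R$ together with $\frac{C_1}{R} \le \frac{C_1}{2r_0}$ this is at most $\frac{C_1}{2r_0} + \frac{3}{2}C_2\sqrt{-\kappa}$. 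Setting $C := \frac{C_1}{2r_0} + \frac{3}{2}C_2\sqrt{-\kappa}$ — which depends only on $d$ (through $C_1,C_2$), on $r_0$, and on $M$ (through $\kappa$) — yields $P_M(R;r) \le C\, \vol_M(B_{R+r_0}(x_0))$ as claimed; note that this $C$ is in fact independent of $x_0$, which is permitted.

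The only step that requires any thought is the first one, upgrading the automatic local Ricci bound on $K$ to a global one via the isometry action; once that is done the statement follows directly from Lemmas~\ref{lem:perimbd} and~\ref{lem:suppkbd}. A curvature‑free argument is conceivable in principle — one would cover the thin annulus $B_{R+r}(x_0)\setminus B_{R-r}(x_0)$ by $G$‑translates of $\bar B_{r_0}(x_0)$ and try to extract the factor $r$ — but a naive covering count bounds the number of translates meeting the annulus by the number of orbit points in a shell of fixed thickness $\sim r_0$, which does \emph{not} shrink with $r$, so one would be forced to prove a Minkowski–content‑type bound for spheres by hand. Since Section~\ref{sect:covest} already relies on the Bishop–Gromov theorem throughout, the route above is both shorter and consistent with the rest of the paper.
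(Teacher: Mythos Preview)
Your proposal is correct and follows exactly the same approach as the paper: establish a global lower Ricci bound by carrying the bound on a compact fundamental set $K$ across the isometric $\Gamma$-action, then invoke Lemmas~\ref{lem:perimbd} and~\ref{lem:suppkbd}. The paper's proof is a two-line sketch of precisely this argument; your version simply fills in the bookkeeping needed to extract an explicit constant $C = \frac{C_1}{2r_0} + \frac{3}{2}C_2\sqrt{-\kappa}$.
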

\begin{proof}
Take $K \subseteq M$ compact such that $\Gamma \cdot K = M$. Since the Ricci curvature tensor is bounded on $K$, and preserved under the isometric action of each $\gamma \in \Gamma$, $M$ has bounded Ricci curvature. The result then follows from Lemmas \ref{lem:perimbd} and \ref{lem:suppkbd}.
\end{proof}

Again if $M$ has nonnegative Ricci curvature, one can further divide the right-hand side by $R$, but we ignore this distinction here since the case of nonnegative Ricci curvature has already been treated in the previous section.

\begin{lemma}[$r < r_0$]\label{lem:rltr0}
Let $x_0 \in M$, $\lambda$, $\eps$ and $r_0$ as above. Suppose $\Gamma$ is of polynomial growth with exponent $\alpha > 0$. Then there exists $C > 0$ such that for $R > r_0$,
\[ C_{d-1}(\partial B_R(x_0); r_0) = \sup_{0 < r \leq r_0} N(\partial B_R(x_0); r) r^{d-1} \leq C R^{\alpha}. \]
\end{lemma}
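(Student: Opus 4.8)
The plan is to combine the volumetric covering bound of Section~\ref{sect:volest} with the Schwarz--Milnor lemma, reducing everything to an estimate on the volume of a single large geodesic ball. First I would dispose of small radii: for $r_0 < R < 2r_0$ we have $\partial B_R(x_0) \subseteq \bar B_{2r_0}(x_0)$, so $C_{d-1}(\partial B_R(x_0); r_0) \leq C_{d-1}(\bar B_{2r_0}(x_0); r_0) < \infty$ by Proposition~\ref{prop:supregbd} (using $\vol_M(\bar B_{3r_0}(x_0)) < \infty$ and Lemma~\ref{lem:glamd}), and this is trivially $\leq C R^\alpha$ on that bounded range after enlarging $C$. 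So I may assume $R \geq 2r_0$. Then, applying Proposition~\ref{prop:supregbd} to $A = \partial B_R(x_0)$ with $m = d-1$, $\nu = \vol_M$ and radius parameter $r_0/2$, together with $(\partial B_R(x_0))^r \subseteq B_{R+r}(x_0) \setminus B_{R-r}(x_0)$, gives exactly as in \eqref{eq:covmfdub} that
\[ C_{d-1}(\partial B_R(x_0); r_0) \leq 2^d\, \vartheta_d(\bar B_R(x_0); r_0/2)^{-1} \sup_{0 < r \leq r_0/2} P_M(R; r). \]

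Next I would estimate the two factors on the right-hand side. Lemma~\ref{lem:glamd} furnishes a constant $\lambda_0 > 0$, independent of $R$, with $\vartheta_d(\bar B_R(x_0); r_0/2) \geq \lambda_0$; and Lemma~\ref{lem:gperim} furnishes a constant $C_1 > 0$ (depending only on $M$, $x_0$, $r_0$) with $P_M(R; r) \leq C_1 \vol_M(B_{R+r_0}(x_0))$ for all $0 < r \leq r_0 \leq R/2$. Plugging these in, the whole problem reduces to showing $\vol_M(B_{R+r_0}(x_0)) \leq C R^\alpha$ for $R$ large.

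The heart of the argument --- and the step I expect to be the main obstacle --- is this volume estimate, which is where the polynomial growth hypothesis enters, via the Schwarz--Milnor lemma. I would fix a radius $r_1 > \delta$, with $\delta$ the constant from Lemma~\ref{lem:gcovnum}, and use that lemma to cover $\bar B_{R+r_0}(x_0)$ by at most $\beta_\A(\lfloor \lambda(R + r_0 + r_1 + \eps) \rfloor)$ translates $\gamma \cdot \bar B_{r_1}(x_0)$, $\gamma \in \Gamma$; since each $\gamma$ acts by an isometry, $\vol_M(\gamma \cdot \bar B_{r_1}(x_0)) = \vol_M(\bar B_{r_1}(x_0))$, whence
\[ \vol_M(B_{R+r_0}(x_0)) \leq \beta_\A\!\left( \lfloor \lambda(R + r_0 + r_1 + \eps) \rfloor \right) \vol_M(\bar B_{r_1}(x_0)). \]
Polynomial growth of order $\alpha$ then bounds the right-hand side by $C R^\alpha$ for $R$ large. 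Chaining the three displayed inequalities yields $C_{d-1}(\partial B_R(x_0); r_0) \leq C R^\alpha$, with $C$ depending only on $M$, $x_0$, $r_0$ and the generating set. The remaining points --- that $\lambda$, $\eps$, $\delta$ can be chosen uniformly in $R$, and the bookkeeping of the regime where $\lfloor \lambda(R + r_0 + r_1 + \eps)\rfloor$ is below the threshold of the polynomial growth bound --- are routine and can be absorbed into the small-$R$ discussion above.
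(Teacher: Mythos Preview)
Your proposal is correct and follows essentially the same route as the paper: apply Proposition~\ref{prop:supregbd} to $\partial B_R(x_0)$, control $\vartheta_d$ via Lemma~\ref{lem:glamd} and $P_M$ via Lemma~\ref{lem:gperim}, then bound $\vol_M(B_{R+r_0}(x_0))$ using Lemma~\ref{lem:gcovnum} and the polynomial growth of $\Gamma$. Your version is in fact slightly more careful than the paper's in two places: you explicitly dispose of the range $r_0 < R < 2r_0$ (needed because Lemma~\ref{lem:gperim} assumes $r_0 \leq R/2$), and you make explicit the $r_0/2$ parameter in Proposition~\ref{prop:supregbd}. The auxiliary radius $r_1 > \delta$ is unnecessary, since ``$r_0$ as above'' already includes $r_0 > \delta$, so you can take $r_1 = r_0$ and invoke Lemma~\ref{lem:gcovnum} directly as the paper does.
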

\begin{proof}
Proposition \ref{prop:supregbd} and Lemmas \ref{lem:glamd} and \ref{lem:gperim} yield
\[ C_{d-1}(\partial B_R(x_0); r_0) \leq C \vol_M(B_{R+r_0}(x_0)). \]
Then since $\Gamma$ acts by isometries and in particular preserves volume, we can apply the first inequality in Lemma \ref{lem:gcovnum} to obtain
\begin{align*} 
\vol_M(B_{R+r_0}(x_0)) & \leq \vol_M(B_{r_0}(x_0)) N_\Gamma(B_{R+r_0}(x_0); B_{r_0}(x_0))
\\ & \leq \vol_M(B_{r_0}(x_0)) \beta_\A\left( \lfloor \lambda (R + r_0 + \eps) \rfloor \right)
\\ & \leq C^\prime (R+r_0+\eps)^\alpha
\end{align*}
for some constant $C^\prime > 0$ depending on $x_0$, $r_0$, $\eps$ and the growth rate of $\beta_\A$, but not on $R$. Note lastly that for $R > r_0$, $\frac{R+r_0+\eps}{R} < \frac{r_0+\eps}{r_0}$ which is likewise a constant.
\end{proof}

For $r \geq r_0$, we can apply the second inequality in Lemma \ref{lem:gcovnum}, but it does not affect the asymptotic behavior of the bound obtained unless $\lambda = 1$ and $\beta_\A$ is strictly polynomial in the sense that $\frac{\beta_\A(k)}{k^\alpha}$ converges to some $C \in (0,\infty)$. We nevertheless include the general bound before restricting to polynomial growth:

\begin{lemma}[$r \geq r_0$]\label{lem:rgeqr0}
Let $x_0 \in M$, $\A$, $\lambda$, $\eps$ and $r_0$ as above. 
Given $R > r_0$, the following bound holds:
\[ \sup_{r_0 \leq r \leq R} N(\partial B_R(x_0); r) r^{d-1} \leq C R^{d-1} \left[ \beta_\A\left( \lfloor \lambda (R + r_0 + \eps) \rfloor \right) - \beta_\A\left( \lfloor \lambda^{-1} (R_1 - r_0 - \eps) \rfloor \right) \right] \]
In particular, if $\Gamma$ is of polynomial growth of order $\alpha > 0$,
\[ \sup_{r_0 \leq r \leq R} N(\partial B_R(x_0); r) r^{d-1} \leq C R^{\alpha+d-1}. \]
\end{lemma}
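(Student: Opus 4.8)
The plan is to reduce the estimate to the group-growth bound of Lemma \ref{lem:gcovnum}: in the range $r \ge r_0$ one may cover $\partial B_R(x_0)$ by $\Gamma$-translates of the fixed closed ball $\bar B_{r_0}(x_0)$, each of which is itself a ball of radius $r_0 \le r$, so the covering number $N(\partial B_R(x_0); r)$ is dominated by the number of translates needed, which Lemma \ref{lem:gcovnum} controls through $\beta_\A$. The factor $R^{d-1}$ then enters merely from the crude bound $r^{d-1}\le R^{d-1}$ valid on the range $r\le R$.

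Concretely, fix $x_0 \in M$ together with the constants $\lambda \ge 1$, $\eps \ge 0$, $\delta \ge 0$ and the base radius $r_0 > \delta$ from Lemma \ref{lem:gcovnum}, and take $R$ with $R > r_0 + \eps$. Since $d(x,x_0) = R$ for every $x \in \partial B_R(x_0)$, the sphere lies in the closed annulus $\bar B_R(x_0) \setminus \bar B_{R_1}(x_0)$ for any $R_1 \in [r_0+\eps, R)$; fixing such an $R_1$ (the precise value is immaterial, since in the polynomial regime the term it produces will be discarded), monotonicity of $N_\Gamma(\cdot\,;\bar B_{r_0}(x_0))$ under set inclusion together with the second inequality in Lemma \ref{lem:gcovnum} gives
\[ N_\Gamma\big(\partial B_R(x_0);\,\bar B_{r_0}(x_0)\big) \le \beta_\A\big(\lfloor \lambda(R+r_0+\eps)\rfloor\big) - \beta_\A\big(\lfloor \lambda^{-1}(R_1-r_0-\eps)\rfloor\big). \]
For any $r$ with $r_0 \le r \le R$, a minimal $\Gamma$-cover of this kind consists of $N_\Gamma(\partial B_R(x_0);\bar B_{r_0}(x_0))$ sets of the form $\gamma\cdot \bar B_{r_0}(x_0) = \bar B_{r_0}(\gamma\cdot x_0)$, each contained in an open $r$-ball; the borderline case $r = r_0$ is handled by running the same argument with a base radius slightly below $r_0$, which is permissible since $r_0 > \delta$ and only decreases the right-hand side above. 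Hence $N(\partial B_R(x_0); r) \le N_\Gamma(\partial B_R(x_0);\bar B_{r_0}(x_0))$, and multiplying by $r^{d-1}\le R^{d-1}$ and taking the supremum over $r\in[r_0,R]$ yields the first displayed inequality of the lemma (with constant $C=1$).

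For the polynomial-growth conclusion, suppose $\beta_\A(k) \le C_0 k^\alpha$ for all $k \ge k_0$. Then for $R$ large $\beta_\A\big(\lfloor\lambda(R+r_0+\eps)\rfloor\big) \le C_0\lambda^\alpha(R+r_0+\eps)^\alpha \le C_1 R^\alpha$, and discarding the nonnegative term $\beta_\A\big(\lfloor\lambda^{-1}(R_1-r_0-\eps)\rfloor\big)$ gives $\sup_{r_0\le r\le R} N(\partial B_R(x_0); r)\,r^{d-1} \le C_1 R^{\alpha+d-1}$, as claimed. The only points needing a little care are the endpoint $r=r_0$ in the monotonicity step and the mild restriction $R > r_0+\eps$ required to apply the annular estimate; neither is a genuine obstacle, since Proposition \ref{prop:gcovbd} invokes this lemma only for $R$ sufficiently large, and the sharper bound retaining the subtracted $\beta_\A$-term is obtained for free whenever one does not pass to the polynomial regime.
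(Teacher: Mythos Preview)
Your proof is correct and follows essentially the same route as the paper: bound $r^{d-1}$ by $R^{d-1}$, dominate the metric covering number by $N_\Gamma(\partial B_R(x_0);\bar B_{r_0}(x_0))$, and invoke the annular estimate of Lemma \ref{lem:gcovnum}. The paper's own proof is terser---it implicitly fixes $R_1=R-r_0$ and does not comment on the open-versus-closed ball issue at the endpoint $r=r_0$---so your additional care on those points is justified but not a genuine departure from the argument.
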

\begin{proof}
The first inequality follows simply by
\[ N(\partial B_R(x_0); r) r^{d-1} \leq N(\partial B_R(x_0); r_0) R^{d-1} \leq N_G(\bar B_R(x_0) \setminus \bar B_{R-r_0}(x_0); \bar B_{r_0}(x_0)) R^{d-1}. \]
The second inequality then follows by substituting $\beta_\A(k) \leq C k^\alpha$.
\end{proof}

\begin{remark*}
One could also partition into cases $k r_0 \leq r \leq (k+1) r_0$, and bound
\[ \sup_{k r_0 \leq r \leq (k+1) r_0} N(\partial B_R(x_0); r) r^{d-1} \leq N(\partial B_R(x_0); k r_0) \left((k+1) r_0\right)^{d-1}, \]
but this does not improve the exponent $R^{\alpha+d-1}$: Lemma \ref{lem:gcovnum} bounds the first term on the right-hand side by an expression on the order of $(R+k r_0)^\alpha$, and
\[ \sup_{\substack{k \in \N \\ k r_0 \leq R}} (R+k r_0)^\alpha (k+1)^{d-1} \geq C R^{\alpha+d-1}, \]
as can be observed by taking $k \in \N$ such that $R \approx 2k r_0$.
\end{remark*}

These two lemmas together prove Proposition \ref{prop:gcovbd}.

\begin{remark*}
The exponent $\frac{\alpha}{d-1}+1$ in Proposition \ref{prop:gcovbd} is not sharp in general; this is because the notion of quasi-isometry is still too coarse to capture the sizes of spheres precisely. The $+1$ term, which arises only in the $r \geq r_0$ case, could be eliminated if $\beta_\A$ is strictly polynomial and the quasi-isometry constant $\lambda$ associated to $\A$ is equal to $1$.
However, the term $\frac{\alpha}{d-1}$ that also appears in Lemma \ref{lem:rltr0} cannot be improved further using word metrics.

To illustrate this, consider the abelian group $\Z^d$ acting geometrically on $\R^d$ by translations, as a special case of Example \ref{ex:permet}. $\Z^d$ is of polynomial growth with exponent $\alpha = d$, so Proposition \ref{prop:gcovbd} implies $O(R^{\frac{d}{d-1}+1})$ covering growth, but $\R^d$ is simply of $O(R)$ covering growth. 

The inaccuracy here arises from Lemma \ref{lem:gcovnum}: while we expect annuli of the form $A := B_{R+r_0}(x_0) \setminus B_{R-r_0}(x_0)$ to be covered by $C R^{d-1}$ cubes for $R \gg r_0$, the upper bound on $N_{\Z^d}(A; B_{r_0}(x_0))$ is obtained by sandwiching $A$ between balls of $\Z^d$ with respect to the word metric, which for the canonical basis is the $\ell^1$ norm. But even as we let $r_0 \to 0$, the $\ell^1$ annulus that contains the $\ell^2$ sphere in $\R^d$ will have nonzero volume which will scale on the order of $R^d$. This prevents $\alpha$ from being replaced with $\alpha-1$ in Lemma \ref{lem:rgeqr0}. 

In such specific examples, possibly also for periodic metrics in general, one could estimate $N_\Gamma(A; B_{r_0}(x_0))$ or $N_\Gamma(A; K)$ more directly, which could lead to a more precise exponent in Lemma \ref{lem:rgeqr0}.
\end{remark*}

\appendix

\section{Relation between covering growth and Minkowski contents of spheres}\label{app:mincon}

The supremum on the right-hand side of equation \eqref{eq:supregbd} can be thought of as an approximate version of the upper perimeters or \emph{Minkowski contents} of sets:

\begin{defin}
Let $M$ be a $d$-dimensional Riemannian manifold, $A \subseteq M$ compact. Let $m \leq d$ be a natural number. The $m$-dimensional Minkowski content of $A$ is given by
\[ \M^m(A) = \lim_{r \to 0^+} \frac{\vol_M(A^r)}{\omega_{d-m} r^{d-m}}, \]
where the upper resp. lower limits are denoted by $\overline{\M}$ resp. $\underline{\M}$. 
\end{defin}

For $m$-rectifiable subsets $A \subseteq M$, $\M^m(A)$ is known to exist and coincide with the $m$-dimensional Hausdorff measure. Moreover, for geodesic spheres, the Minkowski content of the sphere $\partial B_R(x_0)$ can also be obtained as the derivative of $\vol_M(B_R(x_0))$ with respect to $R$. For reference on Minkowski contents and perimeters in the more general setting of metric measure spaces, see Ambrosio, Marino and Gigli \cite{Ambrosio:2017tf}. 

For reference, we mention that on Riemannian manifolds with $\nu = \vol_M$, the quantity $N(A; r) r^m$ becomes comparable to $\M^m(A)$ as $r \to 0^+$:

\begin{prop}\label{prop:mincon}
Let $M$ be a $d$-dimensional Riemannian manifold, $A \subseteq M$ compact, $m \leq d$.

Then there exists $C > 0$ dependent only on $d$ and $m$ such that
\[ C^{-1} \underline{\M}^m(A) \leq \liminf_{r \to 0^+} N(A; r) r^m \leq \limsup_{r \to 0^+} N(A; r) r^m \leq C \overline{\M}^m(A). \]
\end{prop}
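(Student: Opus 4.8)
The plan is to derive both outer inequalities from the packing--covering comparison $N(A;2r)\le P(A;r)\le N(A;r)$ of the earlier lemma, combined with the elementary Riemannian fact that, uniformly for $x$ in a fixed compact neighborhood of $A$, the volume of the small ball $B_r(x)$ is asymptotic to $\omega_d r^d$ as $r\to 0^+$; the middle inequality $\liminf\le\limsup$ is automatic. Concretely I would first fix a relatively compact open set $U\supseteq A$ and $r_0>0$ small enough that $B_{2r}(x)\subseteq U$ whenever $x\in A^{r_0}$ and $r\le r_0$. The geometric input is that for every $\eps\in(0,\omega_d)$ there is $r_\eps\in(0,r_0]$ with
\[ (\omega_d-\eps)\,r^d \;\le\; \vol_M(B_r(x)) \;\le\; (\omega_d+\eps)\,r^d \qquad \text{for all } x\in\overline{U},\ 0<r\le r_\eps. \]

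For the upper bound I would apply Lemma \ref{lem:supregbd} with $\nu=\vol_M$ and $\vartheta=\omega_d-\eps$: the left-hand estimate above gives $\vol_M(B_s(x))\ge(\omega_d-\eps)s^d$ on $A$ for $s\le r_\eps$, hence $P(A;s)\le(\omega_d-\eps)^{-1}\vol_M(A^{s})\,s^{-d}$. Taking $s=r/2$ and using $N(A;r)\le P(A;r/2)$, multiplication by $r^m$ yields
\[ N(A;r)\,r^m \;\le\; 2^m\,P(A;r/2)\,(r/2)^m \;\le\; \frac{2^m\,\omega_{d-m}}{\omega_d-\eps}\cdot\frac{\vol_M(A^{r/2})}{\omega_{d-m}\,(r/2)^{d-m}}. \]
Passing to $\limsup_{r\to0^+}$ and then letting $\eps\to0^+$ gives $\limsup_{r\to0^+}N(A;r)r^m\le \frac{2^m\omega_{d-m}}{\omega_d}\,\overline{\M}^m(A)$.

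For the lower bound I would instead use a cover. If $\{x_i\}_{i=1}^{N}$, $N=N(A;r)$, is an $r$-cover of $A$, then any point within distance $r$ of $A$ lies within distance $2r$ of some $x_i$, so $A^{r}\subseteq\bigcup_{i=1}^{N}B_{2r}(x_i)$; since the centers $x_i$ can be taken in $A^{r}\subseteq\overline{U}$, the right-hand volume estimate gives, for $r\le r_\eps/2$,
\[ \vol_M(A^{r}) \;\le\; \sum_{i=1}^{N}\vol_M(B_{2r}(x_i)) \;\le\; N\,(\omega_d+\eps)\,(2r)^d, \]
whence $N(A;r)\,r^m \ge \frac{\omega_{d-m}}{2^d(\omega_d+\eps)}\cdot\frac{\vol_M(A^{r})}{\omega_{d-m}\,r^{d-m}}$. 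Taking $\liminf_{r\to0^+}$ and letting $\eps\to0^+$ yields $\liminf_{r\to0^+}N(A;r)r^m\ge \frac{\omega_{d-m}}{2^d\omega_d}\,\underline{\M}^m(A)$. Setting $C:=\max\{\,\frac{2^m\omega_{d-m}}{\omega_d},\,\frac{2^d\omega_d}{\omega_{d-m}}\,\}$, which depends only on $d$ and $m$, then gives the claimed chain of inequalities.

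The only non-formal step is the uniform volume asymptotics on $\overline{U}$, which is precisely what keeps the constant $C$ free of any curvature bound near $A$. I would obtain it either from the classical expansion $\vol_M(B_r(x))=\omega_d r^d\bigl(1-\frac{\mathrm{Scal}(x)}{6(d+2)}r^2+o(r^2)\bigr)$ with the remainder uniform on compacts, or, more elementarily, by comparing $B_r(x)$ with a Euclidean ball in normal coordinates whose bi-Lipschitz distortion tends to $1$ uniformly on $\overline{U}$ as $r\to0^+$; either route relies only on smoothness and boundedness of the metric tensor on the compact set $\overline{U}$.
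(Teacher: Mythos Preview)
Your proof is correct and follows essentially the same approach as the paper's. Both arguments derive the upper inequality from the packing bound of Lemma~\ref{lem:supregbd} (the paper phrases this via Proposition~\ref{prop:supregbd}) together with $\vartheta_d(A;r_0)\to\omega_d$, and the lower inequality from the covering $A^r\subseteq\bigcup_i B_{2r}(x_i)$ with centers in $A^r$; the uniform volume asymptotics you invoke via normal coordinates is exactly Lemma~\ref{lem:limdens} in the paper.
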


\begin{lemma}\label{lem:limdens}
Let $M$ be a $d$-dimensional Riemannian manifold. Then
\[ \lim_{r \to 0^+} \frac{\vol_M(B_r(x))}{\omega_d r^d} = 1 \]
uniformly on compact subsets of $M$.
\end{lemma}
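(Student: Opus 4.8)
The plan is to use the standard fact that in normal coordinates centered at $x$, the Riemannian volume form has density $\sqrt{\det g_{ij}(y)} = 1 + O(|y|^2)$ as $y \to 0$, where the implied constant depends only on bounds for the curvature tensor (and its first derivatives) in a neighborhood of $x$. First I would fix a compact set $K \subseteq M$ and choose $r_1 > 0$ such that the exponential map $\exp_x$ is a diffeomorphism from the Euclidean ball $B_{r_1}(0) \subseteq T_x M \cong \R^d$ onto its image for every $x \in K$; such a uniform injectivity radius exists by compactness of $K$ and continuity of the injectivity radius. On this chart, $\vol_M(B_r(x)) = \int_{B_r(0)} \sqrt{\det g_{ij}^{(x)}(y)}\, \di y$, where $g_{ij}^{(x)}$ are the metric coefficients in normal coordinates at $x$.

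The key step is a uniform Taylor estimate: there exist $C_K > 0$ and $r_2 \in (0, r_1]$, depending only on $K$, such that $\bigl| \sqrt{\det g_{ij}^{(x)}(y)} - 1 \bigr| \leq C_K |y|^2$ for all $x \in K$ and $|y| \leq r_2$. This follows because the second-order Taylor coefficients of $y \mapsto \sqrt{\det g_{ij}^{(x)}(y)}$ at the origin are universal polynomial expressions in the components of the Ricci (or full curvature) tensor at $x$, which vary continuously in $x$ and are therefore bounded on $K$, while the remainder is controlled by third derivatives of the metric in normal coordinates, again bounded uniformly over $K$ by a compactness argument. Integrating over $B_r(0)$ then gives
\[ \left| \frac{\vol_M(B_r(x))}{\omega_d r^d} - 1 \right| = \left| \frac{1}{\omega_d r^d} \int_{B_r(0)} \left( \sqrt{\det g_{ij}^{(x)}(y)} - 1 \right) \di y \right| \leq \frac{C_K}{\omega_d r^d} \int_{B_r(0)} |y|^2 \di y = \frac{d\, C_K}{d+2}\, r^2, \]
which tends to $0$ as $r \to 0^+$, uniformly in $x \in K$.

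The main obstacle is making the uniformity in the Taylor estimate precise: one must verify that the constant $C_K$ really can be taken independent of $x \in K$. This is handled by noting that the map $(x, y) \mapsto \sqrt{\det g_{ij}^{(x)}(y)}$ is smooth on the compact set $\{(x,y) : x \in K,\ |y| \leq r_2\}$ (smoothness in $x$ following from smooth dependence of geodesics on initial conditions), so its second-order remainder in $y$, divided by $|y|^2$, extends continuously and is bounded there. Alternatively, and perhaps more cleanly, one can invoke a uniform version of the Bishop–Gromov comparison theorem from both sides: since the sectional curvature is bounded above and below by constants $\pm\kappa_K^2$ on a neighborhood of $K$, the volume ratios are squeezed between $\vol^d_{-\kappa_K^2}(r)/(\omega_d r^d)$ and $\vol^d_{\kappa_K^2}(r)/(\omega_d r^d)$, both of which converge to $1$ as $r \to 0^+$ at a rate depending only on $\kappa_K$. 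The latter route avoids normal-coordinate computations entirely, at the cost of citing a (standard) localized curvature comparison.
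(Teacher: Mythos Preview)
Your argument is correct. The paper, however, takes a more elementary route that avoids both the moving family of normal charts and any curvature input: given $\varepsilon>0$, it covers the compact set by \emph{finitely many} normal coordinate charts $\phi_i\colon U_i\to\R^d$, each taken small enough to be $(1-\varepsilon,1+\varepsilon)$-bi-Lipschitz, and then invokes the Lebesgue number lemma to guarantee that every ball $B_r(x)$ with $x$ in the compact set and $r\le\delta$ lies in a single $U_i$. The bi-Lipschitz constants then sandwich both $\L^d(\phi_i(B_r(x)))$ and $\vol_M(B_r(x))$, giving $(1-\varepsilon)^{2d}\le \vol_M(B_r(x))/(\omega_d r^d)\le (1+\varepsilon)^{2d}$ directly. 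The trade-off is that the paper's argument yields no rate of convergence, whereas your Taylor-expansion route delivers the sharper $O(r^2)$ estimate (and your Bishop--Gromov/G\"unther alternative is equally valid); on the other hand, the paper's version needs no uniform control on curvature or on higher derivatives of the metric, only the qualitative fact that $g_{ij}\to\delta_{ij}$ at the center of a normal chart together with compactness.
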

\begin{proof}
Let $A \subseteq M$ be compact. For each $\varepsilon > 0$, cover $A$ with finitely many normal coordinate charts $\{(U_i, \phi_i)\}_{i=1}^N$, small enough so that each chart $\phi_i \colon U_i \to \phi_i(U_i) \subseteq \R^d$ is $(1-\varepsilon, 1+\varepsilon)$-bi-Lipschitz.

By the Lebesgue number lemma, there exists $\delta > 0$ such that any ball of radius $r \leq \delta$ with center in $A$ is contained in a single chart $U_i$. 
Let $r \leq \delta$, $x \in A$, and suppose $B_r(x) \subseteq U_i$. Then 
\[ B_{(1-\varepsilon)r}(\phi_i(x)) \subseteq \phi_i(B_r(x)) \subseteq B_{(1+\varepsilon)r}(\phi_i(x)) \]
so that 
\[ \omega_d (1-\varepsilon)^d r^d \leq \L^d(\phi_i(B_r(x))) \subseteq \omega_d (1+\varepsilon)^d r^d. \]
Moreover, since the volume form on $M$ coincides with the $d$-dimensional Hausdorff measure, the fact that $\phi_i$ is $(1-\varepsilon, 1+\varepsilon)$-bi-Lipschitz implies
\[ (1-\varepsilon)^d \L^d(\phi_i(B_r(x))) \leq \vol_M(B_r(x)) \leq (1+\varepsilon)^d \L^d(\phi_i(B_r(x))). \]
Therefore
\[ (1-\varepsilon)^{2d} \leq \frac{\vol_M(B_r(x))}{\omega_d r^d} \leq (1+\varepsilon)^{2d}, \]
and this inequality holds for all $x \in A$ and all $0 < r \leq \delta$.
\end{proof}

\begin{proof}[Proof of Proposition \ref{prop:mincon}]
First note that by Proposition \ref{prop:supregbd},
\[ \sup_{r '\leq 2r_0} N(A; r) r^m \leq \lambda_d(A; r_0)^{-1} \sup_{r \leq r_0} \frac{\vol_M(A^r)}{r^{d-m}}. \]
Letting $r_0 \to 0^+$ and noting that $\lambda_d(A; r_0) \to \omega_d$ by Lemma \ref{lem:limdens} yields the latter inequality.

For the former inequality, fix $r_0 > 0$, and note that since $A^\prime := \overline{A^{r_0}}$ is compact,
\[ \lim_{r \to 0^+} \frac{\vol_M(B_r(x))}{\omega_d r^d} = 1 \]
uniformly on $A^\prime$.
For $r \leq r_0$ arbitrary, let $S$ be an $r$-cover of $A$ with $\# S = N(A; r)$. Then $S \subseteq A^{r_0}$ by the minimality of $S$, and $A^r \subseteq \bigcup_{x \in S} B_{2r}(x)$ so that
\[ \vol_M(A^r) \leq \sum_{x \in S} \vol_M(B_{2r}(x)) \leq N(A;r) \sup_{\substack{x \in A^\prime \\ r^\prime \leq 2 r_0}} \frac{\vol_M(B_{r^\prime}(x))}{\omega_d r^{\prime d}} \omega_d (2r)^d. \]
Rearranging and taking the infimum over all $r \leq r_0$,
\[ \inf_{r \leq r_0} \frac{\vol_M(A^r)}{\omega_{d-m} r^{d-m}} \leq \frac{2^d \omega_d}{\omega_{d-m}} \sup_{\substack{x \in A^\prime \\ r^\prime \leq 2 r_0}} \frac{\vol_M(B_{r^\prime}(x))}{\omega_d r^{\prime d}} \inf_{r \leq r_0} N(A; r) r^m. \]
Letting $r_0 \to 0^+$ then yields the former inequality.
\end{proof}

\section{Auxiliary lemmas}\label{app:auxlem}

\subsection{Pierce's lemma for floor quantization.}

In the proof of Theorem \ref{thm:quantub}, radii $r \in [0,\infty)$ are not assigned to the closest quantizer but to the largest quantizer below them; i.e., each point $r \in [r_i, r_{i+1})$ is mapped to $r_i$. In that case, the Pierce upper bound on the quantization error needs to be proven separately. We follow the random quantizer argument in Graf and Luschgy \cite[Lem. 6.6]{quantbook}.

\begin{defin}
Let $\mu$ be a finite Borel measure on $[0,\infty)$, $p \in [1,\infty)$. 
Given a set of quantizers $S = \{r_i\}_{i=1}^N \subseteq (0,\infty)$, indexed in ascending order, the induced \emph{floor quantizer} on $[0,\infty)$ assigns each $r \in [0,\infty)$ to the greatest element of $S$ bounded by $r$:
\[ F_S(r) := \max(\{0\} \cup S \cap [0,r]) = \left\{ \begin{matrix} 0, & r < r_1; \\ r_i, & r_i \leq r < r_{i+1}, \\ r_N, & r_N \leq r. \end{matrix} \right. \quad i = 1, \ldots, N-1; \]
The \emph{floor quantization cost} of $\mu$ of order $p$ with respect to $S$ is given by
\[ V^F_{p}(\mu; S) := \int_0^\infty |r-F_S(r)|^p \di \mu(r) = \sum_{i=0}^N \int_{r_i}^{r_{i+1}} (r-r_i)^p \di \mu(r), \]
where by convention $r_0 = 0$, $r_{N+1} = \infty$, and $\int_{r_i}^{r_{i+1}} = \int_{[r_i,r_{i+1})}$.

The \emph{floor quantization cost} of $\mu$ of order $p$ with cardinality $N$ is given by
\[ V^F_{N,p}(\mu) := \inf_{\# S \leq N} V^F_p(\mu; S). \]
\end{defin}

\begin{lemma}\label{lem:piercefloor}
Let $\mu$ be a finite measure on $[0,\infty)$, $p \in [1,\infty)$, $\delta > 0$.
Then for each $N \in \N$,
\[ N^p V^F_{N,p}(\mu) \leq C \int_0^\infty (1+r^{p+\delta}) \di \mu(r), \]
where $C = C(p,\delta) > 0$ is a constant independent of $\mu$.
\end{lemma}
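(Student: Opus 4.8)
The plan is to adapt the random/greedy quantizer construction of Graf and Luschgy \cite[Lem. 6.6]{quantbook} to the floor quantizer by always placing the chosen quantizer at the \emph{left} endpoint of its cell, so that on an interval of length $\ell$ carrying at most $\mu$-mass $m$ the floor cost is at most $\ell^p m$. First note that if $\int_0^\infty r^{p+\delta}\di\mu(r)=\infty$ there is nothing to prove, so I may assume this quantity, call it $m$, is finite; and for $N$ below a threshold $N_0=N_0(p,\delta)$ to be fixed I would use the crude estimate $V^F_{N,p}(\mu)\le V^F_{1,p}(\mu)\le V^F_p(\mu;\{1\})\le\int_0^\infty r^p\di\mu(r)\le\int_0^\infty(1+r^{p+\delta})\di\mu(r)$, the last inequality since $r^p\le 1+r^{p+\delta}$ for all $r\ge0$; this already yields the claim with constant $N_0^p$ on that range. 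So it suffices to treat $N$ large.

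For large $N$ I would decompose $[0,\infty)=[0,1)\cup\bigcup_{j=0}^{J-1}A_j\cup[2^J,\infty)$, where $A_j:=[2^j,2^{j+1})$ and $J:=\lceil (p/\delta)\log_2 N\rceil$. The tail $[2^J,\infty)$ is absorbed by a single quantizer at $2^J$, contributing $\int_{2^J}^\infty(r-2^J)^p\di\mu(r)\le\int_{2^J}^\infty r^p\di\mu(r)\le 2^{-J\delta}m\le N^{-p}m$ (using $r\ge 2^J$ there); the choice of $J$ is precisely what makes this term of order $N^{-p}$. On $[0,1)$ I would place a uniform grid of $\lfloor N/3\rfloor$ points, so every point is within $\lfloor N/3\rfloor^{-1}$ of its left-endpoint quantizer, giving contribution at most $\lfloor N/3\rfloor^{-p}\mu([0,1))$. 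On each annulus $A_j$ I would place a uniform grid of $n_j\ge1$ points, so each point lies within $2^j/n_j$ of its quantizer, yielding contribution at most $(2^j/n_j)^p\mu(A_j)\le 2^{-j\delta}n_j^{-p}m_j$ with $m_j:=\int_{A_j}r^{p+\delta}\di\mu$, where I used $r\ge 2^j$ on $A_j$. Since the annuli and $[0,1)$ are pairwise disjoint, the left-endpoint quantizer of any point in a given piece is genuinely its nearest quantizer from below, so all these bounds hold simultaneously for the union of the grids.

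It then remains to choose the allocation $\{n_j\}_{j=0}^{J-1}$. Writing $a_j:=2^{-j\delta}m_j$ and $A:=\sum_{j=0}^{J-1}a_j^{1/(p+1)}$ (the case $A=0$ being trivial), I would take $n_j:=\max\{1,\lceil\lfloor N/3\rfloor\,a_j^{1/(p+1)}/A\rceil\}$, so that $n_j\ge\lfloor N/3\rfloor\,a_j^{1/(p+1)}/A$ in every case, hence $\sum_{j=0}^{J-1}a_jn_j^{-p}\le A^{p+1}\lfloor N/3\rfloor^{-p}$, while $\sum_{j=0}^{J-1}n_j\le\lfloor N/3\rfloor+2J$. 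By H\"older's inequality with exponents $p+1$ and $(p+1)/p$, $A^{p+1}=\big(\sum_j 2^{-j\delta/(p+1)}m_j^{1/(p+1)}\big)^{p+1}\le\big(\sum_{j\ge0}2^{-j\delta/p}\big)^p\sum_j m_j\le K_{p,\delta}^p\,m$ with $K_{p,\delta}:=(1-2^{-\delta/p})^{-1}<\infty$. Adding all four contributions and noting that the total number of quantizers used is at most $1+\lfloor N/3\rfloor+(\lfloor N/3\rfloor+2J)\le N$ once $N$ is large enough that $2J+1\le N/3$, I obtain $V^F_{N,p}(\mu)\le\lfloor N/3\rfloor^{-p}\mu([0,1))+K_{p,\delta}^p m\lfloor N/3\rfloor^{-p}+N^{-p}m\le C(p,\delta)\,N^{-p}\int_0^\infty(1+r^{p+\delta})\di\mu(r)$, which is the desired bound for $N\ge N_0$; combined with the small-$N$ case this finishes the proof.

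The main obstacle is the allocation step: one needs the truncation level $J$ to grow with $N$ in order to push the tail down to order $N^{-p}$, yet each annulus is forced to carry at least one quantizer, so the geometric weights $2^{-j\delta}$ must be arranged so that, after the H\"older optimization $n_j\propto a_j^{1/(p+1)}$, the auxiliary series $\sum_j 2^{-j\delta/p}$ converges and the annulus contributions collapse into a constant multiple of the single moment $\int r^{p+\delta}\di\mu$. Getting these exponents to line up is the heart of the argument; the remaining points — the disjointness of the pieces, the quantizer budget $N'+\sum n_j+1\le N$, and the trivial small-$N$ estimate — are routine bookkeeping.
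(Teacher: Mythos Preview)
Your argument is correct, but it follows a genuinely different route from the paper's proof. The paper adapts the \emph{random quantizer} argument of Graf--Luschgy directly: it draws $r_1,\ldots,r_N$ i.i.d.\ from a Pareto distribution with tail exponent $\beta=\delta/p$, bounds $\P(|r-F_S(r)|>t)$ by $(1-\beta(r+1)^{-\beta-1}t)^N\le\exp(-N\beta(r+1)^{-\beta-1}t)$, and then integrates in $t$ using the Gamma identity to obtain $\E|r-F_S(r)|^p\le C N^{-p}(1+r)^{p+\delta}$ pointwise in $r$. Your proof is instead fully deterministic: a dyadic decomposition of $[0,\infty)$ into $[0,1)$, annuli $[2^j,2^{j+1})$, and a tail $[2^J,\infty)$ with $J\sim(p/\delta)\log_2 N$, followed by a H\"older-optimized allocation $n_j\propto a_j^{1/(p+1)}$ of quantizers across annuli. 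The paper's approach is shorter and avoids the allocation bookkeeping, while yours is constructive (one can read off explicit quantizers) and does not invoke probability; both yield constants depending only on $p$ and $\delta$. Your handling of the small-$N$ range, the disjointness of the pieces ensuring each point's floor quantizer lies in its own block, and the budget count $\lfloor N/3\rfloor+\sum_j n_j+1\le N$ for $N$ large are all sound, up to harmless off-by-one adjustments in the grid counts.
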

\begin{proof}
Set $\beta := \delta/p$, and suppose the quantizers $S = \{r_i\}_{i=1}^N$ are generated independently according to the same Pareto distribution with cumulative distribution function
\[ G(y) := \left\{ \begin{matrix} 1 - (y+1)^{-\beta}, & y \geq 0; \\ 0, & y \leq 0. \end{matrix} \right. \]
Then by Tonelli's theorem, we can bound the quantization cost as follows:
\begin{align*}
V^F_{N,p}(\mu) \leq \E \int_0^\infty |r-F_S(r)|^p \di \mu(r) =  \int_0^\infty \E |r-F_S(r)|^p \di \mu(r),
\end{align*}
where the expectation corresponds to integration over $\di G(r_1) \ \ldots \di G(r_N)$.

Given $r,t \in [0,\infty)$, $|r-F_S(r)| = r - F_S(r) > t$ implies that $r > t$ and that for each $i = 1, \ldots, N$, either $r < r_i$ or $r > r_i + t$. Therefore
\begin{align*} 
\P(|r-F_S(r)| > t) 
& \leq \One(r > t) \prod_{i=1}^N \left( \P(r_i > r) + \P(r_i < r - t) \right) 
\\ & = \One(r > t) \left( 1 - G(r) + G(r-t) \right)^N 
\\ & = \One(r > t) \left(1 + (r+1)^{-\beta} - (r-t+1)^{-\beta} \right)^N.
\end{align*}
Considering $f(t) := (r-t+1)^{-\beta}$, we have $f^\prime(t) = \beta(r-t+1)^{-\beta-1} \geq \beta (r+1)^{-\beta-1}$ hence
\[ (r-t+1)^{-\beta} - (r+1)^{-\beta} = f(t) - f(0) \geq \beta (r+1)^{-\beta-1} t. \]
Consequently, for $0 < t < r$,
\[ \left(1 + (r+1)^{-\beta} - (r-t+1)^{-\beta} \right)^N \leq \left( 1 - \beta (r+1)^{-\beta-1} t \right)^N \leq \exp(-N \beta (r+1)^{-\beta-1} t). \]
Hence for $r \in [0,\infty)$,
\begin{align*}
\E |r-F_S(r)|^p & = p \int_0^\infty r^{p-1} \P(|r-F_S(r)| > t) \di t 
\leq p \int_0^r t^{p-1} \exp(-N \beta (r+1)^{-\beta-1} t) \di t.
\end{align*}
By the identity $\int_0^\infty t^{p-1} e^{-at} \di t = \Gamma(p) a^{-p}$ for $a > 0$, we have
\begin{align*}
\E |r-F_S(r)|^p 
& \leq p \int_0^\infty t^{p-1} \exp(-N \beta (r+1)^{-\beta-1} t) \di t 
\\ & = p \Gamma(p) \left( N \beta (r+1)^{-\beta-1} \right)^{-p} 
\\ & = \frac{\Gamma(p+1)p^{p+1}}{\delta^p} N^{-p} (r+1)^{p+\delta}
\leq C N^{-p} (r^{p+\delta} + 1)
\end{align*}
for $C := 2^{p+\delta-1} \Gamma(p+1)p^{p+1} \delta^{-p}$.
Integrating with respect to $r$ yields the statement.
\end{proof}

\subsection{Approximate perimeters in manifolds of constant curvature.}

For the prototypical manifolds of constant sectional curvature $\kappa \leq 0$, the volumes of geodesic balls and spheres satisfy the following bound:

\begin{lemma}\label{lem:pkbd}
Let $\kappa \leq 0$. There exist constants $C_1, C_2 \geq 0$ such that, for any $R \geq 2r > 0$,
\[ \frac{P^d_\kappa(R; r)}{\vol^d_\kappa(B_{R+r})} \leq \frac{\vol^d_\kappa(\partial B_{R+r})}{\vol^d_\kappa(B_{R+r})} \leq \frac{C_1 + C_2 \sqrt{-\kappa} (R+r)}{R+r}. \]
\end{lemma}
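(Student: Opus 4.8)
The plan is to handle the two inequalities independently: the left-hand one is an immediate monotonicity remark (essentially already contained in the display in Lemma~\ref{lem:perimbd}), while the right-hand one reduces, after a change of variables that eliminates $\kappa$, to a single one-variable differential inequality with constants depending only on $d$.

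For the left inequality, I would recall that $P^d_\kappa(R;r) = d\omega_d\cdot\frac{1}{2r}\int_{R-r}^{R+r}\sin_\kappa^{d-1}(t)\,\di t$ and $\vol^d_\kappa(\partial B_{R+r}) = d\omega_d\sin_\kappa^{d-1}(R+r)$. Since $\kappa\le 0$, the function $\sin_\kappa$ is nonnegative and nondecreasing on $[0,\infty)$ (its derivative equals $\cosh(\sqrt{-\kappa}\,\cdot)\ge 1$, or is identically $1$ when $\kappa=0$), and the hypothesis $R\ge 2r$ gives $R-r\ge r>0$, so $[R-r,R+r]\subseteq[0,\infty)$. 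Hence the average of $\sin_\kappa^{d-1}$ over $[R-r,R+r]$ is bounded by its value at the right endpoint $R+r$, which is precisely $P^d_\kappa(R;r)\le\vol^d_\kappa(\partial B_{R+r})$ once the common positive factor $\vol^d_\kappa(B_{R+r})$ is cleared.

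For the right inequality it suffices to show, for every $\rho>0$,
\[ \frac{\vol^d_\kappa(\partial B_\rho)}{\vol^d_\kappa(B_\rho)} = \frac{\sin_\kappa^{d-1}(\rho)}{\int_0^\rho\sin_\kappa^{d-1}(t)\,\di t} \le \frac{C_1 + C_2\sqrt{-\kappa}\,\rho}{\rho}, \]
and then specialize to $\rho=R+r$. When $\kappa=0$ this reads $d/\rho\le C_1/\rho$, so any $C_1\ge d$ suffices. When $\kappa<0$, I would set $a=\sqrt{-\kappa}$, substitute $s=at$, and write $u:=a\rho$, $I(u):=\int_0^u\sinh^{d-1}(s)\,\di s$; the inequality becomes $u\sinh^{d-1}(u)\le(C_1+C_2u)\,I(u)$ for $u>0$, with no $a$ remaining, which is why the resulting constants are universal (depending only on $d$). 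My proposal is to take $C_1=d$, $C_2=d-1$ and consider $h(u):=\big(d+(d-1)u\big)I(u)-u\sinh^{d-1}(u)$; then $h(0)=0$, and using $I'(u)=\sinh^{d-1}(u)$, $I''(u)=(d-1)\sinh^{d-2}(u)\cosh u$, together with the elementary identity $(1+u)\sinh u-u\cosh u=\sinh u-ue^{-u}$, one computes
\[ h'(u)=(d-1)\Big[\,I(u)+\sinh^{d-2}(u)\big(\sinh u-ue^{-u}\big)\Big]. \]
Here $I(u)\ge0$, $\sinh^{d-2}(u)\ge0$ on $(0,\infty)$, and $\sinh u\ge u\ge ue^{-u}$ for $u\ge0$, so $h'\ge0$ and hence $h\ge0$ on $[0,\infty)$, which is exactly the claim; the case $d=1$ is trivial, and the $\kappa=0$ case is also recovered as the limit $u\to0^+$, where $I(u)\sim u^d/d$.

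I do not expect a genuine obstacle here: the only step requiring care is the computation of $h'$, which is routine and, crucially, $\kappa$-free, so the constants come out independent of $\kappa$. As a closing remark I would note that the lemma feeds directly into Lemma~\ref{lem:suppkbd}: combined with Lemma~\ref{lem:perimbd} it gives $\frac{P^d_\kappa(R;r)}{\vol^d_\kappa(B_{R+r})}\le\frac{C_1}{R+r}+C_2\sqrt{-\kappa}$, which is nonincreasing in $r$, so its supremum over $0<r\le r_0$ (with $R\ge2r_0$) is at most $\frac{C_1+C_2\sqrt{-\kappa}\,(R+r_0)}{R}$, matching the stated conclusion there with $C_1=d$ when $\kappa=0$.
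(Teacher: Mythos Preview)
Your proposal is correct and takes a genuinely different route from the paper.

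The paper proceeds by case analysis: after handling $\kappa=0$ directly, it fixes a threshold $r_0>0$ and treats $\kappa=-1$ in two regimes. For small radii it uses $\sinh(R+r)\le (R+r)\cosh(3r_0)$ together with $\vol^d_{-1}(B_{R+r})\ge\omega_d(R+r)^d$ to get a $C_1/(R+r)$ bound; for large radii it bounds $\int_{r_0}^{R+r}\sinh^{d-1}$ from below via $e^{-t}\sinh t\ge e^{-r_0}\sinh r_0$ and an exponential integral, obtaining a uniform constant. General $\kappa<0$ is then recovered by the scaling $\vol^d_\kappa(B_r)=\vol^d_{-1}(B_{\sqrt{-\kappa}\,r})$. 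Your approach instead eliminates the case split entirely: after the same scaling you reduce to the single inequality $u\sinh^{d-1}(u)\le\big(d+(d-1)u\big)\int_0^u\sinh^{d-1}$ and verify it by showing $h'\ge 0$, with the identity $(1+u)\sinh u-u\cosh u=\sinh u-ue^{-u}$ making the sign transparent. This is shorter and yields the explicit constants $C_1=d$, $C_2=d-1$, whereas the paper's constants are implicit and depend on the auxiliary $r_0$. The paper's argument, by contrast, is more bare-hands and does not require discovering the right auxiliary function; it also makes the qualitative picture visible, namely that the ratio behaves like $d/(R+r)$ near the origin and tends to a constant $\sim d-1$ at infinity, which is exactly what your choice of $C_1,C_2$ encodes.
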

\begin{proof}
Note firstly that, since $\vol^d_\kappa(\partial B_R)$ is increasing with respect to $R$ for $\kappa \leq 0$, $P^d_\kappa(R; r) \leq \vol^d_\kappa(\partial B_{R+r})$. For $\kappa = 0$, we have
\[ \frac{\vol^d_\kappa(\partial B_{R+r})}{\vol^d_\kappa(B_{R+r})} = \frac{d \omega_d (R+r)^{d-1}}{\omega_d (R+r)^d} = \frac{d}{R+r}. \]
Now suppose $\kappa = -1$. Fix $r_0 > 0$. We investigate the cases $R \leq 2 r_0$ and $R \geq 2 r_0$ separately. Suppose $R \leq 2 r_0$, and let $r \leq R/2$ so that $R+r \leq 3r_0$. Then
\[ P^d_{-1}(R; r) \leq \vol^d_{-1}(\partial B_{R+r}) = d \omega_d \sinh^{d-1}(R+r) \]
and since $\sinh^\prime = \cosh$ is increasing on $[0,\infty)$,
\[ \sinh(R+r) \leq \sinh(0) + (R+r) \cosh(R+r) \leq (R+r) \cosh(3r_0) \]
so 
\[ P^d_{-1}(R;r) \leq d \omega_d \cosh^{d-1}(3r_0) (R+r)^{d-1}, \]
while $\vol^d_{-1}(B_{R+r}) \geq \vol^d_0(B_{R+r}) = \omega_d (R+r)^d$, so
\[ \frac{P^d_{-1}(R;r)}{\vol^d_{-1}(B_{R+r})} \leq \frac{d \cosh^{d-1}(3 r_0)}{R+r}. \]
Now suppose $R \geq 2 r_0$. Observe that
\[ e^{-t} \sinh(t) = \frac{1 - e^{-2t}}{2} \]
is increasing with respect to $t \in [0,\infty)$. Therefore
\begin{align*} 
\vol^d_{-1}(B_{R+r}) & \geq d \omega_d \int_{r_0}^{R+r} \sinh^{d-1}(t) \di t
\\ & \geq d \omega_d \left( e^{-r_0} \sinh(r_0) \right)^{d-1} \int_{r_0}^{R+r} e^{(d-1)t} \di t
\\ & = d \omega_d \left( e^{-r_0} \sinh(r_0) \right)^{d-1} \frac{e^{(d-1)(R+r)} - e^{(d-1)r_0}}{d-1}
\\ & \geq \omega_d \sinh^{d-1}(r_0) \left( e^{(d-1)(R+r-r_0)} - 1 \right),
\end{align*}
whereas
\[ P^d_{-1}(R; r) \leq d \omega_d \sinh^{d-1}(R+r) \leq d \omega_d e^{(d-1)r_0} e^{(d-1)(R+r-r_0)}. \]
Hence
\[ \frac{P^d_{-1}(R; r)}{\vol^d_{-1}(B_{R+r})} \leq \frac{d e^{(d-1)r_0}}{\sinh^{d-1}(r_0)} \frac{e^{(d-1)(R+r-r_0)}}{e^{(d-1)(R+r-r_0)}-1}, \]
and the right-hand side is uniformly bounded for $R+r-r_0 \geq r_0$.

We can therefore pick constants $C_1, C_2 \geq 0$ such that the inequality
\[ \frac{\vol^d_\kappa(\partial B_{R+r})}{\vol^d_\kappa(B_{R+r})} \leq \frac{C_1 + C_2 \sqrt{-\kappa} (R+r)}{R+r} \]
holds for $\kappa = 0, -1$. For other $\kappa < 0$, we have $\vol^d_\kappa(B_r) = \vol^d_{-1}(B_{\sqrt{-\kappa} r})$ and $\vol^d_\kappa(\partial B_r) = \sqrt{-\kappa} \vol^d_{-1}(\partial B_{\sqrt{-\kappa}r})$, and therefore
\[ \frac{\vol^d_\kappa(\partial B_{R+r})}{\vol^d_\kappa(B_{R+r})}
= \frac{\sqrt{-\kappa} \vol^d_{-1}(\partial B_{\sqrt{-\kappa}(R+r)})}{\vol^d_{-1}(B_{\sqrt{-\kappa}(R+r)})}
\leq \sqrt{-\kappa} \frac{C_1 + C_2 \sqrt{-\kappa} (R+r)}{\sqrt{-\kappa}(R+r)} = \frac{C_1 + C_2 \sqrt{-\kappa} (R+r)}{R+r}. \qedhere
\]
\end{proof}

\begin{paragraph}{{\bf Acknowledgments}}
The second author expresses gratitude to Beno\^it Kloeckner and Chiara Rigoni for their valuable discussions regarding this topic. We also extend our appreciation to Urs Lang for his insightful feedback and constructive comments on our work.
\end{paragraph}

\bibliographystyle{abbrv}
\nocite{*}
\raggedright
\addcontentsline{toc}{section}{References}
\bibliography{refs}

\end{document}